\numberwithin{equation}{section}
\newcommand\paperbody%
\newtheorem{lemma}{Lemma}[section]
\newtheorem{proposition}[lemma]{Proposition}
\newtheorem{theorem}[lemma]{Theorem}
\newtheorem{non-theorem}{Non-Theorem}
\newtheorem{conjecture}{Conjecture}
\newtheorem{definition}{Definition}
\newtheorem{remark}[lemma]{Remark}
\newtheorem{example}{Example}
\newcommand\abs[1]{|#1|}
\numberwithin{equation}{section}
\begin{document}
\begin{abstract}
In this paper, we confirm a conjecture of Orlik-Randell from 1977 on the Seifert form of chain type invertible singularities. We use Lefschetz bifibration techniques as developed by Seidel (inspired by Arnold and Donaldson) and take advantage of the symmetries at hand. We believe that our method will be useful in understanding the homological/categorical version of Berglund-H\"ubsch mirror conjecture for invertible singularities. 
\end{abstract}
\title[Seifert form of chain type invertible singularities ]{Seifert form of chain type invertible singularities }
\author{Umut Varolgunes}

\maketitle

\section{Introduction}\label{s1}

For an $n$-tuple of positive integers $a=(a_1,..,a_n)\in \mathbb{Z}^n_{>0}$, $n\geq 1$, we define the polynomial:
\begin{align}p_a(z_1,\ldots,z_n):= \sum_{i=1}^{n-1}z_i^{a_i}z_{i+1}+z_n^{a_n}.
\end{align}
These are called the chain type invertible polynomials. They are quasi-homogeneous polynomials, and if $a_n\geq 2$, they have an isolated singularity at the origin when considered as a map $\mathbb{C}^n \to \mathbb{C}$. We define the group of symmetries of $p_a$ to be 
\begin{align}\Gamma_a:=\{(\lambda_1,\ldots, \lambda_n,\lambda)\mid \lambda_1^{a_1}\lambda_2=\ldots= \lambda_{n-1}^{a_{n-1}}\lambda_n=\lambda_{n}^{a_{n}}=\lambda\}\subset (\mathbb{C}^*)^{n+1}.\end{align}
There is an inclusion of $\mathbb{C}^*$ into $\Gamma_a$ witnessing the quasi-homogeneity (see Equation \ref{eqquasi} in Section \ref{ss3.1}).
%

The Milnor number and the characteristic polynomial of the singularity of $p_a$ were computed in \cite{MO}. Let us note that the Milnor number is given by 
\begin{align}\mu(a):=a_1\ldots a_n-a_2\ldots a_n+\ldots +(-1)^{n-1}a_n+(-1)^n.\end{align} 
In \cite{OR}, Orlik-Randell computed the integral monodromy of $p_a$ with respect to a carefully chosen basis of the integral homology of the Milnor fiber of $p_a$. They were not able to compute the intersection form, but they conjectured that their basis is in fact a distinguished basis whose Seifert matrix is given by:\begin{equation}\label{seifertmatrix}
S(a):=\begin{bmatrix} 
&1 & \alpha_1 & \alpha_2 & \alpha_3 & \ldots & \alpha_{\mu(a)-1}\\
&0 & 1  & \alpha_1 & \alpha_2 &\ldots & \alpha_{\mu(a)-2}\\
&0 & 0 & 1 &\alpha_1 &\ldots&\alpha_{\mu(a)-3}\\
& &&&\ldots&&&\\
&0 &0&0&\ldots&1&\alpha_1\\
&0 &0&0&0&\ldots&1\\
\end{bmatrix}.
\end{equation}Here  $\alpha_1,\alpha_2,\ldots,\alpha_{\mu(a)-1}\in\mathbb{Z}$ are defined via the equality: \begin{align}\prod_{i=0}^{n}(1-t^{r_i})^{(-1)^{i+n-1}}= 1+ \alpha_1t+ \alpha_2t^2 +\ldots+ a_{\mu(a)-1}t^{\mu(a)-1} (\text{mod } t^{\mu(a)}),\end{align}
inside $\mathbb{Z}[[t]]$, where $r_i := \prod_{j = n-i+1}^{n}a_j$. Our sign conventions are slightly different from the ones of \cite{OR}, and we have stated their conjecture using our conventions (which are explained in Section \ref{ss2.1}). We are also using the inverse of the matrix that appeared in the Orlik-Randell paper (see Remark 3.7 from \cite{AT} and also Lemma \ref{s2lemma}). 

In this paper, we resolve the Orlik-Randell conjecture. Let us be more precise about what we actually prove. In Theorem 2.11 of \cite{OR}, Orlik-Randell give an explicit generators and relations presentation of the reduced integral cohomology $H^{*}$ of the Milnor fiber of $p_a$ as an abelian group. The proof is an inductive argument using the cohomology long exact sequence of a pair (also see Remark \ref{relativeorlik} below). In particular, the generators correspond to a basis of the relative cohomology group constructed in their Theorem 2.10 via \cite{le}, which has special properties in regards to an ad-hoc monodromy action defined in Lemma 2.6 (ii) (nothing else about this basis is specified in \cite{OR}). 

The generators of $H^{*}$ are linearly ordered in their notation but it appears to us that there is only a natural cyclic order. Taking the right number of adjacent generators provide a $\mathbb{Z}$-basis of $H^{*}$ with special properties with respect to the monodromy operator. The only possible way to interpret their conjecture is that the dual basis to any of these bases in reduced integral homology $H_{*}=Hom(H^*,\mathbb{Z})$ of the Milnor fiber is distinguished with the Seifert matrix in Equation \ref{seifertmatrix}. 

\begin{remark}
Notice that $H_*$ is consequently presented as the solution set of some linear equations in a free abelian group (the relative homology group) with an induced basis. Yet, the induced dual bases of $H_*$ as in the previous paragraph are not obtained by taking the corresponding adjacent basis elements of the ambient abelian group - they must be corrected (in a unique way) so that they lie in the solution set.
\end{remark}

Strictly speaking we do not prove or disprove this form of the conjecture. We believe that the following statement captures the actual content of the Orlik-Randell conjecture.

\begin{theorem}[Modified Orlik-Randell conjecture]\label{s1thm} The (reduced) integral homology of the Milnor fiber of $p_a$ admits a cylically ordered collection of $r_n=a_1\ldots a_n$ many elements $v_1,\ldots ,v_{r_n}$ such that any $\mu(a)$ adjacent ones form a distinguished basis. Moreover, the monodromy operator applied to $v_i$ is $v_{i-\mu(a)}$, and the Seifert matrix of any one of these distinguished bases is given by the matrix in Equation \ref{seifertmatrix}. 
\end{theorem}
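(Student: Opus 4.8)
The plan is to realize the Milnor fiber of $p_a$ as a Lefschetz fibration and to exploit the large symmetry group $\Gamma_a$ in the spirit of Seidel's work on directed Fukaya categories. First I would set up an iterated Lefschetz bifibration adapted to the chain structure of $p_a$: writing $p_a(z_1,\ldots,z_n) = z_1^{a_1}z_2 + q(z_2,\ldots,z_n)$ exhibits $p_a$ as a polynomial in $z_1$ whose coefficients depend on the lower-dimensional chain polynomial $q$, and peeling off the variables one at a time gives a tower of fibrations whose fibers are Milnor fibers of shorter chains. The base case $n=1$ is $z_1^{a_1}$, an $A_{a_1-1}$ singularity, whose Milnor fiber is $a_1$ points and whose distinguished basis and Seifert form are classical. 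The inductive step should produce, from a distinguished basis of the fiber Milnor fiber, the thimbles over a configuration of critical values in the base, and the total Milnor fiber's vanishing cycles arise as joins/suspensions of these with the vanishing cycles of the one-variable polynomial $z^{a}z' + (\text{value})$.

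The key point is to track the cyclic symmetry. The subgroup of $\Gamma_a$ coming from scaling $z_n$ by roots of unity acts on the critical values of the innermost fibration by rotation, and more globally a cyclic group of order $r_n = a_1\cdots a_n$ acts on the Milnor fiber permuting a natural collection of Lagrangian spheres $v_1,\ldots,v_{r_n}$ transitively in a way compatible with the ordering; this is the geometric source of the claimed cyclic structure. I would show that the monodromy of $p_a$ — which by the quasi-homogeneity is (up to isotopy) a finite-order symplectomorphism realizing this very cyclic rotation, composed with the appropriate shift — sends $v_i$ to $v_{i-\mu(a)}$. Concretely, one checks that after rescaling, the classical Milnor monodromy is the $\Gamma_a$-element $(\zeta^{-1}_1,\ldots,\zeta^{-1}_n, \zeta^{-1})$ acting on homology, and one computes its effect on the explicit vanishing cycles produced by the bifibration; matching it to the shift by $\mu(a)$ is a bookkeeping computation once the indexing of the $v_i$ is fixed by the geometry.

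Next I would compute the Seifert pairing $\langle v_i, v_j\rangle$ for $i<j$ among $\mu(a)$ consecutive elements. Here the strategy is to use the product/suspension structure: the Picard-Lefschetz formula together with the Künneth-type behavior of vanishing cycles under adding a variable expresses the linking numbers of the $v_i$ in terms of those of the lower chain, and the generating function identity $\prod_{i=0}^n (1-t^{r_i})^{(-1)^{i+n-1}} = 1 + \sum \alpha_k t^k$ should emerge inductively: adding the variable $z_n$ with exponent $a_n$ multiplies the characteristic-polynomial-type generating function by the factor $(1-t^{r_{\bullet}})^{\pm 1}$, and correspondingly convolves the Seifert coefficients. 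The upper-triangular-with-constant-diagonals shape of $S(a)$ is exactly the statement that $\langle v_i, v_j\rangle$ depends only on $j-i$, which is forced by the cyclic symmetry acting by the shift (translation invariance of the pairing up to the finite-order monodromy twist), so the content is really the single sequence $\alpha_1,\alpha_2,\ldots$.

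The main obstacle I anticipate is the inductive step for the Seifert form: going from the distinguished basis of the fiber to that of the total space requires a careful choice of vanishing paths (a "matching" in Seidel's terminology) so that the resulting basis is genuinely distinguished — i.e., arises from a single chamber of non-crossing paths — and so that the cyclic labeling is preserved; controlling how the thimbles of the inner fibration get permuted and reordered as one moves the base critical values, and proving that the natural geometric labeling is compatible with the algebraic shift by $\mu(a)$ rather than some other permutation, is the delicate part. A secondary difficulty is sign/orientation bookkeeping: the discrepancy between our conventions and those of \cite{OR}, and the passage between the Seifert matrix and its inverse (cf. Lemma \ref{s2lemma}), must be handled uniformly so that the final answer matches Equation \ref{seifertmatrix} exactly and not up to transpose or inverse. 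Once the cyclic $\Gamma_a$-symmetry is installed at the chain level and the bifibration is set up compatibly, I expect the rest to reduce to the generating-function identity and Picard-Lefschetz bookkeeping.
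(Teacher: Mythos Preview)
Your overall strategy --- project $M_a$ to the $z_1$-coordinate, induct on $n$, and use the order-$r_n$ cyclic subgroup of $\Gamma_a$ to produce the cyclic collection and force the Seifert matrix to be constant along diagonals --- is exactly the framework the paper uses. The $v_i$ are indeed the orbit of a single matching cycle under this symmetry, and the monodromy identification goes as you say.

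Where your plan goes wrong is in the mechanism of the inductive step. The vanishing cycles are \emph{not} joins or suspensions: chain polynomials are not Thom--Sebastiani sums of polynomials in disjoint variables, so no K\"unneth-type formula is available. What the paper establishes instead (Proposition~\ref{s1prop1}) is that the vanishing sphere of a radial path in the base of $f_{(a_0,\ldots,a_n)}$, viewed in the central fiber, is the matching sphere of a ``petal'' of combinatorial length $\mu(a_2,\ldots,a_n)$ in the base of $f_{(a_1,\ldots,a_n)}$. Proving this is the heart of the paper: after a change of variables it becomes a question about how the roots of the trinomial family $t^d - p t^\mu + 1$ move as $p$ traverses a real segment, and the argument relies on classical results of Egerv\'ary on root locations of trinomials. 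Your proposal does not anticipate this, and without it the bifibration gives you matching paths whose combinatorics you cannot pin down.

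The second gap is the Seifert-matrix recursion itself. The passage from the Seifert matrix for $p_{(a_2,\ldots,a_n)}$ to that for $p_a$ is not a convolution of generating functions but an \emph{inversion} followed by a ``rainbow extension'' and a single-color correction (Section~\ref{ss1.4}). Geometrically, one writes each petal as a difference of two Lefschetz thimbles, reduces to intersection numbers of radial vanishing cycles one level down, and then invokes the duality of Lemma~\ref{s2lemma} between the radial system and the system of paths going outside the circle; this duality is what produces the inverse. The alternating-exponent product $\prod (1-t^{r_i})^{(-1)^{i+n-1}}$ is precisely the trace of this alternation of inversion and extension, not of a multiplicative K\"unneth step. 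You gesture at Lemma~\ref{s2lemma}, but your proposed mechanism (``convolves the Seifert coefficients'') would give the wrong recursion.
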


We are able to construct this basis very concretely as matching cycles using a Lefschetz fibration on the Milnor fiber (see the ``petals'' below in Section \ref{ss1.2}). The $\mu(a)$th root of the monodromy operator that made an appearance in various works (see Remark 3.5 of \cite{AT}) arises from a diffeomorphism of the Milnor fiber in our framework (this diffeomorphism was already considered in Section 2 of \cite{ebelinggusein}). We can also derive the linear relation between $\mu(a)+1$ adjacent members of the collection in the theorem, which involves entries of the inverse of the Seifert matrix \ref{seifertmatrix}. 

Because $p_a:\mathbb{C}^n\to\mathbb{C}$ is tame in the sense of Broughton \cite{bro} and has no critical point other than the origin, we can directly work with \begin{align}M_a:=p_a^{-1}(1),\end{align} instead of the Milnor fiber. Let us define $\alpha_1',\alpha_2',\ldots,\alpha_{\mu(a)}'\in\mathbb{Z}$ via the polynomial equation \begin{align}\prod_{i=0}^{n}(1-t^{r_i})^{(-1)^{i+n}}= 1+ \alpha_1't+ \alpha_2't^2 +\ldots + a_{\mu(a)}'t^{\mu(a)}.\end{align}  Here is a sharper version of Theorem \ref{s1thm} in light of these comments.

\begin{theorem}\label{s1thm2}
There exists a diffeomorphism $\phi:M_a\to M_a$ and a homology class $v\in H_{n-1}(M_a,\mathbb{Z})$ with the following properties.
\begin{enumerate}
    \item $\phi^{r_n}$ is the identity.
    \item $\phi^{-\mu(a)}$ acts as the monodromy operator on $H_{n-1}(M_a,\mathbb{Z})$.
    \item $\phi_*:H_{n-1}(M_a,\mathbb{Z})\to H_{n-1}(M_a,\mathbb{Z})$ preserves the Seifert form.
    \item For any integer $k$, the ordered collection $\phi^{k}_*v,\ldots
    ,\phi^{k+\mu(a)-1}_*v$ forms a distinguished basis of $H_{n-1}(M_a,\mathbb{Z})$.
    
    \item The Seifert matrix of $v,\ldots ,\phi^{\mu(a)-1}_*v$ is the matrix in Equation \ref{seifertmatrix}.
    \item We have the relation $$v+\alpha_1'\phi_*v\ldots +\alpha_{\mu(a)}'\phi^{\mu_n}_*v=0$$ in $H_{n-1}(M_a,\mathbb{Z})$.
\end{enumerate}
\end{theorem}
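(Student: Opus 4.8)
The plan is to argue by induction on the number of variables $n$, exploiting the self-similar presentation $p_a(z_1,\ldots,z_n)=z_1^{a_1}z_2+p_{a'}(z_2,\ldots,z_n)$ with $a'=(a_2,\ldots,a_n)$ and realizing the inductive step through the Lefschetz fibration $\pi:=z_1\colon M_a\to\mathbb{C}$, in the spirit of Seidel's Lefschetz bifibration formalism. The base of the induction is $n=1$: here $p_a=z_1^{a_1}$ is the $A_{a_1-1}$ singularity, $M_a$ is the set of $a_1$-th roots of unity, $\phi$ is the order $a_1$ rotation, $v$ is the difference of two adjacent roots, and all six assertions reduce to the classical facts about the $A_{a_1-1}$ chain in the conventions of Section \ref{ss2.1}, using $r_1=a_1$, $\mu(a)=a_1-1\equiv-1\pmod{r_1}$ and $\prod_{i=0}^{1}(1-t^{r_i})^{(-1)^{i}}=(1-t^{a_1})/(1-t)=1+t+\cdots+t^{a_1-1}$.

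For the inductive step the key geometric inputs are: (a) after a deformation making $\pi$ tame, $\pi$ has exactly $r_n$ critical points, and by quasi-homogeneity of $p_a$ its critical values lie at the $r_n$-th roots of a fixed nonzero number, equally spaced on a circle around $0\in\mathbb{C}$; (b) the regular fibre of $\pi$ is a Morsification of $M_{a'}$ --- concretely the level set $\{p_{a'}(z_2,\ldots,z_n)+c^{a_1}z_2=1\}$ --- whose vanishing cycles are the petals of $M_{a'}$, so that the inductive data $(\phi',v')$ lives in it; (c) the diagonal automorphism $\phi\in\Gamma_a$ with $\lambda=1$ whose $z_1$-component is a primitive $r_n$-th root of unity (its remaining components being then forced) preserves $p_a$, has order $r_n$, rotates the base of $\pi$ and hence cyclically permutes the $r_n$ critical values, and restricts on fibres to the corresponding automorphism $\phi'$ of $M_{a'}$. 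One takes $v$ to be the matching cycle --- a petal of Section \ref{ss1.2} --- supported on a short path joining two adjacent critical values and built from the inductively controlled petals in the fibre, and sets $v_i:=\phi_*^{\,i-1}v$ for $i=1,\ldots,r_n$. Then (1) and (3) are immediate ($\phi^{r_n}=\mathrm{id}$ by the explicit formula; a self-diffeomorphism of $M_a$ coming from $\Gamma_a$ preserves the variation structure of the ambient Milnor fibration, hence the Seifert form), and (2) follows by matching the diagonal formula for $\phi$ against the quasi-homogeneous monodromy $z_j\mapsto e^{2\pi i w_j/r_n}z_j$, i.e.\ checking the congruences $w_j\equiv(-1)^{j}\mu(a)\,a_1\cdots a_{j-1}\pmod{r_n}$, which follow from the recursion for the weights together with the identity $\mu(a)=r_n-\mu(a')$.

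The computational core is (4) and (5): that every window $v_k,\ldots,v_{k+\mu(a)-1}$ of $\mu(a)$ consecutive petals is a distinguished basis of $H_{n-1}(M_a;\mathbb{Z})$ with Seifert matrix $S(a)$. I would compute the pairings $\langle v_i,v_j\rangle$ by the standard reduction of linking numbers of matching cycles in a Lefschetz fibration to one dimension down: the answer is dictated by the cyclic position of the two supporting paths among the $r_n$ critical values and by the Seifert and intersection numbers of the petals of the fibre $M_{a'}$ --- encoded in $S(a')$ by induction --- together with the $A_{a_1-1}$-type bookkeeping contributed by the $z_1^{a_1}z_2$ factor. The upper-triangular Toeplitz shape of $S(a)$ and the identification of the $\alpha_i$ with the coefficients of $P_a(t):=\prod_{i=0}^{n}(1-t^{r_i})^{(-1)^{i+n-1}}$ should then drop out of the generating-function recursion $P_a(t)\,P_{a'}(t)\,(1-t^{r_n})=1$, which is immediate from $r_i(a)=r_i(a')$ for $i<n$ and $r_n(a)=a_1r_{n-1}(a)$. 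To obtain (4) for every window --- not just the initial one --- one shows that $\phi$ conjugates $\pi$ to an equivalent Lefschetz fibration, so that the shift $v_i\mapsto v_{i+1}$ is realized by a Hurwitz-type move on distinguished bases; equivalently, $\phi$ is a geometric $\mu(a)$-th root of the monodromy in the sense of \cite{ebelinggusein}. Finally (6): by (4) the classes $v_1,\ldots,v_{\mu(a)}$ form a $\mathbb{Z}$-basis, so $v$ generates $H_{n-1}(M_a;\mathbb{Z})$ as a $\mathbb{Z}[\phi_*^{\pm1}]$-module, and it suffices to identify $\det(\mathrm{Id}-t\,\phi_*)$; computing the Lefschetz numbers $L(\phi^k)=\chi(\mathrm{Fix}(\phi^k))$ directly from the explicit $\phi$ (the only nonempty fixed loci being cut out by coordinate equations) gives $\det(\mathrm{Id}-t\,\phi_*)=\prod_{i=0}^{n}(1-t^{r_i})^{(-1)^{i+n}}=1+\alpha_1't+\cdots+\alpha_{\mu(a)}'t^{\mu(a)}$, and since $\phi_*$ has finite order its eigenvalues are roots of unity stable under inversion, whence $\sum_{j}\alpha_j'\phi_*^{\,j}=0$ as an operator, which applied to $v$ is the stated relation.

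I expect the main obstacle to be the faithful construction of $\pi$ together with a $\mathbb{Z}/r_n$-equivariant system of basepoints, vanishing paths and matching paths, and the precise propagation of the petals through the ``twisted suspension'' $p_{a'}\rightsquigarrow z_1^{a_1}z_2+p_{a'}$: because the coupling variable $z_2$ genuinely occurs in $p_{a'}$ this is not an ordinary Thom--Sebastiani join, so the new critical values, the matching cycles realizing the petals, and the equivariance all have to be pinned down at once and compatibly in order that the reduction of the Seifert form to $S(a')$, the cyclic symmetry $\phi$, and the monodromy relation hold on the nose --- this is exactly where Seidel's Lefschetz-bifibration techniques and the symmetry group $\Gamma_a$ of $p_a$ must be brought to bear together.
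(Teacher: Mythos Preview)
Your overall architecture matches the paper's: the same diagonal $\phi\in\Gamma_a$, the same Lefschetz fibration $f_a=z_1\colon M_a\to\mathbb{C}$, petals as the class $v$, and an induction on $n$ driven by the self-similarity $p_a=z_1^{a_1}z_2+p_{a'}$. Parts (1)--(3) are handled the same way in the paper, and your generating-function identity $P_a(t)P_{a'}(t)(1-t^{r_n})=1$ is exactly the algebraic shadow of the paper's inductive description of $S(a)$ in Section~\ref{ss1.4}. One correction: the petal supporting $v$ is not a path between \emph{adjacent} critical values; it has length $\mu(a')=\mu(a_2,\ldots,a_n)$ and so skips over $\mu(a')$ arcs (Definition~\ref{defpetal}).

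The substantive gap is the one you yourself flag at the end, and it is where the paper does real work. Two pieces are missing from your outline. First, to know that the petals of $f_a$ are a \emph{distinguished} basis for the singularity $p_a$ (not merely matching cycles of an auxiliary fibration on $M_a$), the paper builds the compactified fibration $\pi_a\colon X_a\to\mathbb{P}^1$ gluing a branched cover of $p_a+z_1$ to a branched cover of $f_{(1,a)}$ (Section~\ref{ss3.2}), and then proves Proposition~\ref{s1prop1}: the vanishing sphere of a radial path in $f_{\tilde a}$ is the matching sphere of a $\mu(a_2,\ldots,a_n)$-petal. The proof of Proposition~\ref{s1prop1} is the analytic heart of the paper---it reduces, via the bifibration $w_{\tilde a}$, to tracking roots of the trinomial family $t^d-pt^{\mu}+1$ and uses classical results of Egerv\'ary on their geometry. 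Your phrase ``built from the inductively controlled petals in the fibre'' presumes exactly this proposition. Second, your reduction of $\langle v_i,v_j\rangle$ to the fibre data is correct in spirit, but the mechanism producing the \emph{inversion} $S(a')\mapsto S(a')^{-1}$ in the induction is not the $A_{a_1-1}$ bookkeeping alone: the paper represents each petal as a difference of two Lefschetz thimbles (Section~\ref{ss2.7}) and then invokes the Koszul-type duality between the inward and outward radial systems of vanishing paths (Lemma~\ref{s2lemma}, Section~\ref{ss2.6}), which is what inverts the Seifert matrix of the fibre. Without that duality step the recursion $P_aP_{a'}(1-t^{r_n})=1$ does not fall out of the intersection computation.

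Your treatment of (6) is genuinely different from the paper's. The paper derives (6) from (2), (4), (5) and the standard identity $\mathrm{monodromy}=(-1)^nS(a)^{-1}S(a)^T$, reading off the coefficients from the last column of $S(a)^{-1}S(a)^T$ and then using the palindromic relation $\alpha'_{\mu(a)-i}=(-1)^{n-1}\alpha'_i$. Your Lefschetz-number route would also work---it essentially recovers the Milnor--Orlik computation of the zeta function of $\phi$---but the passage from $\det(\mathrm{Id}-t\phi_*)=\sum\alpha_j't^j$ to the operator identity $\sum\alpha_j'\phi_*^{\,j}=0$ is Cayley--Hamilton for the \emph{reversed} polynomial, so you do need the palindromy (your ``eigenvalues stable under inversion'' observation) together with the determination of $\det\phi_*$ to close that loop; this is a bit more delicate than you indicate.
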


\begin{remark}
As a corollary of (2), (5) and (6), we obtain a geometric proof of the matrix identity (see \cite{horo},\cite{hertling},\cite{AT})
$$M(a)^{\mu(a)}=(-1)^nS(a)^{-1}S(a)^T, $$ where $M(a)$ is the companion matrix:
\begin{equation}
\begin{bmatrix} 
-\alpha_1' & 1 & 0 & 0 & \ldots & 0 \\
-\alpha_2' & 0  & 1 & 0&\ldots & 0\\
-\alpha_3' & 0 & 0 &1 &\ldots&0\\
 &&&\ldots&&&\\
-\alpha_{\mu(a)-1}' &0&0&\ldots&0&1\\
-\alpha_{\mu{(a)}}' &0&0&0&\ldots&0\\
\end{bmatrix}.
\end{equation}
\end{remark}

Before we explain this theorem and its proof further, we make a digression into the categorical version of Orlik-Randell conjecture. Our main goal is to explain the ``mirror computation'' of \cite{AT}.


For $a= (a_1,\ldots,a_n)\in \mathbb{Z}^n_{>0}$, let $a^T := (a_n,\ldots,a_1)$. Then the homological version of the Berglund-H\"{u}bsch mirror symmetry (see \cite{berg} and \cite{takahashi}) conjectures the following (Conjecture 1.1 from \cite{AT}). 

\begin{conjecture}\label{s1con} There exists a triangulated equivalence\[D^b Fuk (p_a) \simeq HMS^{L_{p_{a^T}}}_S(p_{a^T})\]
\end{conjecture}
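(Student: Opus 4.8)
Conjecture \ref{s1con} lies beyond what is proved in this paper; what follows is a sketch of the program the present results are designed to feed into, rather than a proof. The idea is to reduce both sides to explicit directed $A_\infty$-algebras and compare them term by term, with the cyclic symmetry of Theorem \ref{s1thm2} doing most of the work.

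On the A-side, the Lefschetz bifibration on $M_a$ constructed above exhibits $D^bFuk(p_a)$, the derived Fukaya--Seidel category of the LG model $p_a$, as the derived category of the directed $A_\infty$-algebra $\mathcal{A}$ generated by the matching cycles (the petals) $v,\phi_*v,\dots,\phi^{\mu(a)-1}_*v$. By Theorem \ref{s1thm2} the cohomology-level Euler pairing of $\mathcal{A}$ is the Seifert matrix $S(a)$, the grading shift and the Serre twist are governed by $\phi$ (the $\mu(a)$-th root of the monodromy), and the identity $\phi^{r_n}=\mathrm{id}$ upgrades this to a $\mathbb{Z}/r_n$-action on $\mathcal{A}$. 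On the B-side, $HMS^{L_{p_{a^T}}}_S(p_{a^T})$ is a decorated version of the triangulated category of $L_{p_{a^T}}$-equivariant matrix factorizations of the transposed chain polynomial; by the known full exceptional collections for invertible polynomials it is the derived category of a directed $A_\infty$-algebra $\mathcal{B}$, on which the grading group $L_{p_{a^T}}$ induces a comparable finite cyclic symmetry.

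First I would match numerical data: compute the Euler form of the exceptional collection underlying $\mathcal{B}$ — a combinatorial expression in the exponents of $p_{a^T}$ — and show it equals $S(a)$. By the Remark after Theorem \ref{s1thm2} this is essentially the assertion that the monodromy/companion matrix $M(a)$ of $p_a$ agrees with the analogous operator on the matrix factorization side of $p_{a^T}$, i.e. a Berglund--H\"ubsch-type identity between characteristic polynomials, which should be provable directly from the product formula $\prod_{i=0}^n(1-t^{r_i})^{(-1)^{i+n}}$. Next I would upgrade this to an isomorphism of the underlying quivers with relations: $\mathrm{Ext}^1$ between consecutive objects counts arrows and $\mathrm{Ext}^2$ counts relations, and these are pinned down on the A-side by the local geometry of the petals together with $\phi$-equivariance, and on the B-side by the Koszul-type structure of matrix factorizations. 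Finally I would show that $\mathcal{A}$ and $\mathcal{B}$ have the same minimal model: on the B-side this should follow from the polynomial/Koszul structure, and on the A-side $\phi$-equivariance, combined with the distinguished-basis property of every block of $\mu(a)$ consecutive petals and the relation in Theorem \ref{s1thm2}(6), should force the higher $A_\infty$-products among the petals to vanish (or at least be rigid), after which Keller's reconstruction theorem yields the triangulated equivalence.

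The hard part will be this last step on the A-side: ruling out higher holomorphic-polygon corrections among the matching cycles, i.e. formality of $\mathcal{A}$. The essential leverage is the diffeomorphism $\phi$ — $\mathbb{Z}/r_n$-equivariance, together with parts (4)--(6) of Theorem \ref{s1thm2}, constrains the operations $\mu_k$ so severely that one expects them to be determined up to an $A_\infty$-automorphism. A secondary but genuinely delicate point is bookkeeping on the B-side: fixing the precise meaning of the decoration in $HMS^{L_{p_{a^T}}}_S(p_{a^T})$ and checking that the two finite cyclic symmetries correspond, since this is exactly where the passage $a\mapsto a^T$ enters and where a sign or grading error would be fatal.
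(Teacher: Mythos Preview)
The paper does not prove Conjecture \ref{s1con}; it is stated as an open conjecture, and the paper's contribution is only to verify it at the Grothendieck group level by combining Theorem \ref{s1thm2}(5) with Aramaki--Takahashi's B-side computation \cite{AT}. You correctly flag this at the outset and present a program rather than a proof, so on that score there is nothing to compare.

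Your first step --- matching the Euler forms of the two exceptional collections --- is exactly what the paper accomplishes, and the paper explicitly notes that the full exceptional collections on the two sides produce identical Euler matrices, making it plausible that the objects themselves correspond. So far your outline tracks the paper.

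Where your proposal goes beyond the paper, however, it is more optimistic than the evidence warrants. You suggest that $\mathbb{Z}/r_n$-equivariance together with parts (4)--(6) of Theorem \ref{s1thm2} should force the higher $A_\infty$-products among the petals to vanish or be rigid, yielding formality of $\mathcal{A}$. The paper makes no such claim, and there is no general mechanism by which a finite cyclic symmetry rigidifies an $A_\infty$-structure; formality of directed Fukaya--Seidel algebras is typically false and, when true, requires substantial additional input. The paper's own hint toward a categorical attack (the remark at the end of Section \ref{ss1.4} about categorifying the inductive description of $S(a)$, with \cite{ssus} cited as relevant) points in a different direction: an induction on $n$ mirroring the recursive structure of chain polynomials, rather than a one-shot formality argument. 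So while your sketch is a reasonable wish list, the decisive step has no supporting argument, and the route the paper gestures at is not the one you describe.
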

The category on the right hand side (i.e. B-side) is a derived version of the maximally graded matrix factorization category. For details see Section 2 of \cite{AT}. The Grothendieck group of this category along with its Euler pairing was computed in Theorem 3.6 of the same paper.

Aramaki-Takahashi's computation starts by constructing an object $A$ and an automorphism $\Psi$ of $HMS^{L_{p_{a^T}}}_S(p_{a^T})$ such that $A, \Psi(A),\ldots,\Psi^{\mu(a)-1}(A)$ is a full exceptional collection. They then compute the matrix of the Euler pairing with respect to the basis given by the classes of these objects in the Grothendieck group. 

The category on the left side (i.e. the A-side) in Conjecture \ref{s1con} is the derived Fukaya-Seidel category. There are many ways to define this category, but we believe that the conjecture should be stated with one of the constructions, for example Example 2.20 from \cite{GPS} or possibly \cite{Fan}, that do not involve making the auxiliary choice of a Morsification, in line with the B-side category.

\begin{remark}It is well known that the Grothendieck group of $D^b Fuk (p_a)$ with its Euler pairing is isomorphic to $H_{n-1}(M_a,\mathbb{Z})$ with the Seifert form. In light of this, let us ``categorify'' Theorem \ref{s1thm2} (1)-(4).  The only thing missing to make the following statements rigourous is a discussion of gradings but this is outside of our scope.

We can construct an object $A$ and an automorphism $\Phi$ of $D^b Fuk (p_a)$ such that the collection $\Phi^{k}(A)$ (with some shifts that we are leaving unspecified here), $k\in\mathbb{Z}$, forms a helix of period $\mu(a)$ in the sense of \cite{helix}. Equivalently, $\Phi^{-1}$ is polarization of index $\mu(a)$ and $A,\ldots ,\Phi^{\mu(a)-1}(A)$ is a Lefschetz decomposition in the sense of \cite{kuznetsov}. Noting the relationship between the Serre functor of $D^b Fuk (p_a)$ and the action of monodromy, this covers parts (2)-(4). Part (1) would say that $\Phi^{r_n}$ acts as a shift.
\end{remark}

Our resolution of the Orlik-Randell conjecture along with Aramaki-Takahashi's computation shows that Conjecture \ref{s1con} holds at the Grothendieck group level. In fact, the full exceptional collections that are used in the two computations give rise to exactly the same Euler matrices, making it plausible to conjecture that these objects correspond to each other under a mirror equivalence\footnote{In order to observe the transposition taking place note the definition of $d_i$ from \cite{AT} and our definition of $r_i$.}. 

\begin{remark}
Motivated by the present work, we can propose an equivariant upgrade to the Conjecture \ref{s1con}. In particular, we expect the automorphisms that we mentioned on both sides of Conjecture \ref{s1con} to be mirror to each other. This work will appear in a separate paper for the more general case of invertible singularites.
\end{remark}

%
%

We will now explain in some detail the main actors in  Theorem \ref{s1thm2} and also the mechanism that leads to the computation of the Seifert matrix in part (5), which is our main contribution.

\subsection{A closely related Lefschetz fibration}\label{ss1.1}

We will use a special Morsification of $p_a$ given simply by $p_a + z_1$. It is easy to check that $p_a +\epsilon z_1: \mathbb{C}^n \to \mathbb{C}$ is a Lefschetz fibration with $\mu(a)$ critical points for every $\epsilon\in\mathbb{C}^*$. Using Proposition 2.5 of \cite{Fan}, we can show that $\mathbb{C}^{n+1}\to \mathbb{C}^2$ given by $(\epsilon,z_1,\ldots ,z_n)\mapsto (\epsilon,p_a +\epsilon z_1)$ is well-behaved at infinity.

\begin{remark}
We will define our notion of ``tame'' for holomorphic maps in Section \ref{sstame}. Proposition \ref{lemmatamefull} proves that all maps that appear in this introduction are tame (in our sense). Its proof relies on the existence of a fiberwise compactification by quasi-smooth divisors (see Appendix B of \cite{dimcabook} for an introduction to weighted projective spaces, and in particular for the definition of a quasi-smooth complete intersection). This approach is based on the standard technique of constructing fiberwise compactifications by smooth divisors using a Nash blow-up of a Lefschetz pencil, see Section 19b) of \cite{sbook} for example. 
\end{remark}

Our computation also shows that the critical values of $p_a +z_1: \mathbb{C}^n \to \mathbb{C}$ are equidistributed on a circle. The explanation for this nice behaviour is that a certain order $\mu(a)$ cyclic subgroup of $\Gamma_a$ makes $p_a +z_1: \mathbb{C}^n \to \mathbb{C}$ equivariant, where we use multiplication by $e^{\frac{2\pi i }{\mu(a)}}$ on the base (see Section \ref{ss3.2}).

\begin{remark}
It is easy to prove that $\Gamma_a$ is a graph over $\{\lambda_1^{a_1\ldots a_n}=\lambda^{\mu(a_2,\ldots , a_n)}\}\subset (\mathbb{C}^*)^2$, and hence is isomorphic to it.
\end{remark}

Moreover, after base changing by a finite cyclic branched cover $\mathbb{C} \to \mathbb{C}$ (Section \ref{ss2.5}), $p_a+z_1$ compactifies to a Lefschetz fibration over $\mathbb{P}^1$. Let us first introduce the ``other side'' of this fibration, which is where we actually do our computations.

For an $n$-tuple $a= (a_1,\dots ,a_n)\in\mathbb{Z}^n_{> 0}$, we define \begin{align}f_a :M_a=\{p_a(z_1,\ldots,z_n)=1\} \to \mathbb{C},\end{align}
where the map simply projects to the $z_1$-coordinate. $f_a$ is a Lefschetz fibration with $a_1\ldots a_n$ critical points whose critical values are again equidistributed on a circle. The explanation is that the $\{\lambda=1\}$ subgroup of $\Gamma_a$ is a cyclic group of order $a_1\ldots a_n$ which acts on $M_a$.  A particular generator of this action, which rotates the base $\frac{2\pi}{a_1\ldots a_n}$ degrees counter-clockwise, is the diffeomorphism $\phi:M_a\to M_a$ in Theorem \ref{s1thm2}.

\begin{remark}\label{relativeorlik}
The relative cohomology group that Orlik-Randell use in Theorem 2.11 of \cite{OR} is precisely the same as \[H^{\text{mid}}(\text{Tot}(f_{(a_1,\ldots,a_n)})=M_a , f^{-1}_{(a_1,\ldots,a_n)}(0)=M_{(a_2,\ldots ,a_n)}).\]
\end{remark}


Consider the weighted projective space $$\mathbb{P}(1,q_1,\ldots,q_n,1),$$ where $q_k$ are the weights of the $\mathbb{C}^*$-action on $\mathbb{C}^n$ described in Section \ref{ss3.1}. Let $s,x_1,\ldots ,x_n,t$ be the weighted homogenous coordinates. We define $$X_a:=\{s^{\mu(a)}x_1+p_a(x_1,\ldots ,x_n)=t^{a_1\ldots a_n}\}\subset\mathbb{P}(1,q_1,\ldots,q_n,1)-\{s=t=0\},$$ and the map \begin{align}\label{eqpi}\pi_a: X_a\to \mathbb{P}^1,\end{align} which projects to $[s:t]$, is the compactification we mentioned above. In the complement of $\{s=0\}$, and $\{t=0\}$ resp. we obtain the $a_1\ldots a_n$-fold branched cover of $p_a+z_1$, and the $\mu(a)$-fold cover of $f_{(1,a)},$ respectively. In Section \ref{ss3.2}, we give a more hands on description of $\pi_a$, which might be beneficial for the reader who is unfamiliar with weighted projective spaces.

\begin{remark}Note that the projection $\mathbb{P}(1,q_1,\ldots,q_n,1)-\{s=t=0\}\to \mathbb{P}^1$ is isomorphic to the fibration underlying the vector bundle $$\mathcal{O}(q_1)\oplus\ldots\oplus\mathcal{O}(q_n)\to \mathbb{P}^1.$$ 
\end{remark}

Using this relationship we show that the analysis of a certain basis of vanishing paths in the base of $p_a+z_1$ can be translated to $f_{(1,a)}$ (Section \ref{ss3.3}, in particular Figure \ref{basis}). 

\subsection{The analysis of vanishing paths for $f_{\tilde{a}}$}\label{ss1.2}
As a result of the previous section, we are interested in certain vanishing paths in the base of $f_{(1,a)}$. We present our results for any $\tilde{a}=(a_0,\ldots ,a_n)$, $a_0\geq 1$. Note that $f_{\tilde{a}}$ is obtained from $f_{(1,a_1,\ldots,a_n)}$ by pullback over the $a_0$-fold branched cover of $\mathbb{C}$.

First of all, note that the total space of $f_{(a_1,\ldots,a_n)}$, i.e. $M_{(a_1,\ldots ,a_n)}$ is equal to $f_{(a_0,\ldots,a_n)}^{-1}(0)$. Recall that the critical values of $f_{\tilde{a}}$ are $a_0\ldots a_n$ many points equidistributed on a circle. The vanishing paths in the base of $f_{\tilde{a}}$ that we are interested in are the radial ones that go from the critical values to the origin. We would like to present the isotopy classes of their vanishing spheres at the origin as matching spheres of $f_{(a_1,\ldots,a_n)}$.

\begin{definition}\label{defpetal}
Let $S$ be finite set of equidistributed points on a circle centered at $0$ in $\mathbb{C}$. Let $k<\abs{S}$ be a positive integer. We define a \textbf{petal} of length $k$ (or $k$-petal for short) to be an isotopy class of embedded paths \[([0,1], \{0,1\}) \to (\mathbb{C}, S)\] with the image of $(0,1)$ disjoint from $S$,
which admits a representative of the following form. First, take a path on the circle containing $S$ that starts at a point of $S$, goes counter-clockwise skipping over exactly $k$ arcs, and ends at another point of $S$; and then, slightly push it out of the circle. See the left side of Figure \ref{fpetals} for a depiction.\end{definition}

\begin{figure}
\includegraphics[width=0.8\textwidth]{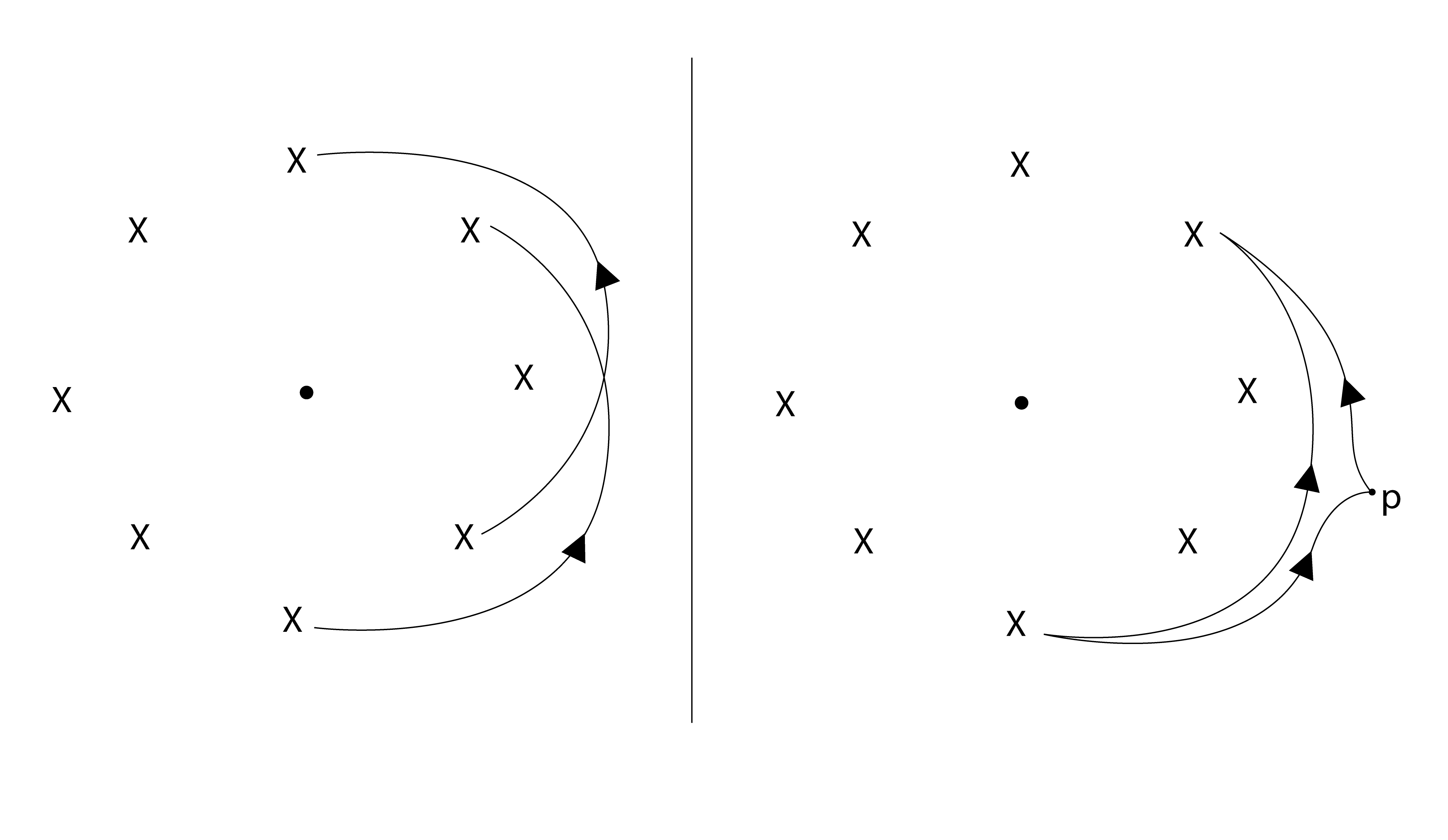}
\caption{On the left are two adjacent petals, and on the right are the two vanishing paths used to analyze a petal}
\label{fpetals}
\end{figure}
The following proposition is the main computation of the paper. It uses the Lefschetz bifibration technique explained in \cite{sbook}. The full discussion is in Section \ref{ss3.4}.

\begin{proposition}\label{s1prop1}
 The vanishing sphere of a radial path in the base of $f_{(a_0,\ldots,a_n)} $
 is isotopic to the matching sphere of a petal of length $\mu(a_2,\ldots,a_n)$ (equal to $1$ for $n=1$) in the base of $f_{(a_1,\ldots,a_n)}$. Moreover if the radial path is rotated $\frac{2\pi}{a_0\ldots a_n}$ degrees the corresponding petal rotates $-\frac{2\pi}{a_1\ldots a_n}$.
 \end{proposition}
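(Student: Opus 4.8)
The plan is to reduce the statement about radial vanishing paths of $f_{(a_0,\ldots,a_n)}$ to a computation entirely inside the Lefschetz bifibration associated to $f_{(a_1,\ldots,a_n)}$, exploiting that $f_{\tilde a}$ is the pullback of $f_{(1,a_1,\ldots,a_n)}$ under the $a_0$-fold branched cover $z\mapsto z^{a_0}$ of the base. First I would record the structure: on $M_{(1,a_1,\ldots,a_n)}=\{z_0 z_1+p_a(z_1,\ldots,z_n)=1\}$ the projection to $z_0$ presents a Lefschetz fibration whose generic fiber we identify, by solving $z_0=(1-p_a)/z_1$ on $\{z_1\neq0\}$ (and analyzing the behaviour as $z_1\to 0$), with a space fibered over $M_{(a_1,\ldots,a_n)}$ — concretely a bifibration $\mathbb{C}^n\supset M_{(1,a)}\to\mathbb{C}$ whose second projection is $f_{(a_1,\ldots,a_n)}$ up to the compactification picture of Section \ref{ss3.3}. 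The critical points of $f_{(1,a)}$ are the $a_1\cdots a_n$ points where $z_1=0$ together with the equations forcing criticality, and the $\Gamma_a$-symmetry (the $\{\lambda=1\}$ subgroup) acts transitively on them, so it suffices to analyze a single radial vanishing path and then propagate by equivariance.

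The core step is the bifibration analysis in the sense of Section 16 of \cite{sbook}. I would set up the auxiliary map $M_{(1,a)}\to\mathbb{C}^2$, $(z_0,\ldots,z_n)\mapsto(z_0,z_1)$ (the two projections being $f_{(1,a)}$ and essentially $f_{(a_1,\ldots,a_n)}$ via the slice $z_1=\mathrm{const}$), check it is a Lefschetz bifibration with the expected critical locus, and then trace the vanishing cycle of a radial path for $f_{(1,a)}$ through the bifibration. The standard mechanism (Lemma 16.? of \cite{sbook}) says that if a vanishing path $\gamma$ in the $z_0$-base is covered by a matching path in the $z_1$-base connecting two critical values of $f_{(a_1,\ldots,a_n)}$, then the vanishing sphere of $\gamma$ is the matching sphere of that matching path. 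So the whole problem becomes: \emph{identify the matching path in the base of $f_{(a_1,\ldots,a_n)}$ that is traced out}. Here the radial path from a critical value of $f_{(1,a)}$ to $0$ lifts to a path in the $z_1$-plane; because the critical values of $f_{(a_1,\ldots,a_n)}$ are $a_1\cdots a_n$ equidistributed points on a circle, and the parallel transport along the radial $z_0$-path rotates the $z_1$-fiber in a controlled way (governed by the monomial relation $z_0 z_1\sim z_1^{a_1}z_2$, i.e. the weight of $z_1$), the two endpoints of the lift land on two critical values separated by exactly $\mu(a_2,\ldots,a_n)$ arcs — this is where the Milnor-number bookkeeping enters, via $\mu(a_2,\ldots,a_n)$ counting the number of $z_1=\text{const}$ critical fibers swept between consecutive branches. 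That identifies the matching path as a $\mu(a_2,\ldots,a_n)$-petal. For $n=1$ the fiber $M_{(a_1)}$ is a point-set of $a_1$ reduced points and the relevant ``petal'' degenerates to length $1$, consistent with the parenthetical in the statement.

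For the last sentence — rotating the radial path by $\tfrac{2\pi}{a_0\cdots a_n}$ rotates the petal by $-\tfrac{2\pi}{a_1\cdots a_n}$ — I would argue purely by equivariance. Passing from $f_{\tilde a}$ to $f_{(1,a)}$ via the $a_0$-fold cover, a $\tfrac{2\pi}{a_0\cdots a_n}$ rotation upstairs corresponds to a $\tfrac{2\pi}{a_1\cdots a_n}$ rotation of the base of $f_{(1,a)}$; under the $\{\lambda=1\}\subset\Gamma_a$ action this base rotation is realized by the diffeomorphism $\phi$ of $M_{(a_1,\ldots,a_n)}$, which in turn rotates the base of $f_{(a_1,\ldots,a_n)}$ by some root of unity. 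Checking the sign/magnitude is a matter of comparing the two $\mathbb{C}^*\hookrightarrow\Gamma_a$ embeddings (the one witnessing quasi-homogeneity versus the $\{\lambda=1\}$ one) on the coordinates $z_0$ and $z_1$, using the defining relations $z_0 z_1\sim z_1^{a_1}z_2\sim\cdots\sim z_n^{a_n}$; the upshot is that the two rotations have opposite orientation and the claimed ratio of angles. The main obstacle I anticipate is the first part of the last sentence's sibling — pinning down precisely \emph{which} petal (i.e. the combinatorial count $\mu(a_2,\ldots,a_n)$ and not some other function of the $a_i$) arises from the parallel-transport monodromy in the bifibration; this requires a careful local model near the $z_1=0$ locus and an honest trace of the vanishing cycle through the critical fibers of the second projection, rather than just a homological count. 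Everything else is either equivariance or the general bifibration formalism of \cite{sbook}.
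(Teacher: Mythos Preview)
Your framework is correct and matches the paper: reduce to $a_0=1$ via the branched cover, set up the Lefschetz bifibration $(z_0,\ldots,z_n)\mapsto(z_0,z_1)$, and read off the matching path by watching the movie of critical values of the second projection as the first coordinate traverses the radial path. The equivariance argument for the rotation statement is also essentially what the paper does (their item (4) in Section~\ref{ss3.4} records that the restriction of the order-$d(\tilde a)$ symmetry to the central fiber is the \emph{inverse} of the order-$d(a)$ symmetry of $f_a$).

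The genuine gap is exactly the step you flag as ``the main obstacle'', and your proposed mechanism for it is wrong. You suggest that the parallel transport along the radial $z_0$-path \emph{rotates} the $z_1$-fiber in a way governed by the weight of $z_1$, so that the endpoints of the lift are separated by $\mu(a_2,\ldots,a_n)$ arcs. But the movie is not a rotation: one is tracking the $d=a_1\cdots a_n$ roots (in $z_1$) of the critical-locus equation, which the paper computes explicitly as
\[
z_1^{d} \;-\; c\,(z_0 z_1 - 1)^{\mu}\;=\;0,\qquad \mu:=\mu(a_2,\ldots,a_n),\quad c>0,
\]
as $z_0$ runs from the positive real critical value $\alpha$ down to $0$. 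At $z_0=\alpha$ there is a double root; at $z_0=0$ the roots are the $d$th roots of $c$. In between, the roots move at very different speeds and along genuinely curved trajectories --- nothing like a rigid rotation --- and the content of the proposition is precisely which two of the $d$ terminal points are the limits of the colliding pair, and how the path between them winds around the others. The paper resolves this by the change of variables $x=z_0z_1-1$, $z_1^d=x^\mu$, parametrized (when $(\mu,d)=1$) by $t\mapsto(t^d,t^\mu)$, which converts the problem into locating the roots of the trinomial
\[
t^{d} - p\,t^{\mu} + 1
\]
as $p$ varies in $[0,\alpha]$; it then invokes classical results of Egerv\'ary on the geometry of trinomial roots (via \cite{trinomials} and \cite{szabo}, together with \cite{dilcher}) to separate the roots into ``small'', ``medium'', and ``large'' groups and to track the arc between the two medium roots. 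That analytic input is what forces the matching path to be a $\mu$-petal rather than a petal of some other length, and it is not recoverable from weight or monomial bookkeeping alone. Your heuristic would at best predict the correct answer without justifying it; to turn the proposal into a proof you would need either to reproduce the trinomial analysis or to find an independent argument controlling the isotopy class of the movie's trace.
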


\begin{remark}
Let us remark that using the Lefschetz fibration $X_a \to \mathbb{P}^1$, one can directly see that $\mu(a_2,\ldots,a_n)$-petals are matching paths in the base of $f_{(a_1,\ldots,a_n)}$.

In general, there might be multiple non-isotopic matching spheres over a matching path, even if there is only one critical point above each critical value (see Remark 16.11 of \cite{sbook}). Therefore, in this lemma we are abusing notation, but this is harmless. Our proof (and in general the Lefschetz bifibration technique once the bifibration is fixed) produces a particular matching sphere. Also see Remark \ref{matchinghomology}.
\end{remark}
 
 \begin{remark}
We discovered this result by experimenting on Mathematica. The short code we used is provided in Appendix \ref{appd}. We prove the proposition using results of Egervary from \cite{eger}. Unfortunately, we cannot access this paper, and also it is in Hungarian, which we cannot read. In \cite{trinomials} and \cite{szabo}, the results of \cite{eger} were summarized, and we will use them as such.
 \end{remark}

 \begin{remark}
As it is well-known in the numerical analysis community, the change of the roots of a polynomial can be ultra-sensitive to changes in the coefficients. For a popular discussion of this see \cite{wilkinson} or consult the Wikipedia page for ``Wilkinson polynomial''. We believe therefore that the question of how reliable computer generated ``movies'' of critical values actually are is not just a question of mathematical pedanticism.   
 \end{remark}
 
 We can take as our $v$ in Theorem \ref{s1thm2} any of the matching cycles of the $\mu(a_2,\ldots , a_n)$-petals in the base of $f_a:M_a\to\mathbb{C}$. Notice how $\phi:M_a\to M_a$ rotates these petals in the counter clockwise direction.
 
 \subsection{Dual systems of vanishing paths in $f_a$}\label{ss1.3}
 The final step in our analysis is to understand the intersection numbers of the matching spheres of two petals of length $\mu(a_2,\ldots a_n)$ in the base of $f_{(a_1,\ldots,a_n)}$ in terms of the intersection numbers of the vanishing cycles of radial paths in the same base. We note that these vanishing cycles live in $\{f_{(a_1,\ldots,a_n)}=0\}$, which is the total space of $f_{(a_2,\ldots,a_n)}$. Clearly, such an understanding leads to an inductive argument, which would eventually compute the Seifert matrix of $p_a+z_1$ with respect to the basis of vanishing paths we had chosen before.
 
 The key points here are the following. Let us fix a disk $D$ in the base of $f_{(a_1,\ldots,a_n)}$ which contains the circle of critical values in its interior. Let us also fix a point $p\in \partial D$. We have a variation pairing on \[H_{\text{mid}}(\text{Tot}(f_{(a_1,\ldots,a_n)} , f^{-1}_{(a_1,\ldots,a_n)}(p)).\]
 We refer the reader to \cite{svar}, Section 1.1 (also see our Appendix C) for how this bilinear pairing is constructed\footnote{note that there is a slight issue in the explanation of the right hand side of the Equation (1.4) there, but this is easily fixed.}.
 
 In the image of the injection \[H_{\text{mid}}(\text{Tot}(f_{(a_1,\ldots,a_n)} )  \xhookrightarrow{} H_{\text{mid}}(\text{Tot}(f_{(a_1,\ldots,a_n)} , f^{-1}_{(a_1,\ldots,a_n)}(p)),\]
 the variation pairing agrees with the usual intersection pairing on $H_{\text{mid}}(\text{Tot}(f_{(a_1,..,a_n)})$.
 
 Notice that a petal in the base of $f_{(a_2,\ldots,a_n)}$ can be represented as a linear combination of two Lefschetz thimbles (in homology). These are the thimbles of vanishing paths outside the circle of critical values. See the right side of Figure \ref{fpetals} for a depiction and Section \ref{ss2.7} for more details.

 The intersection numbers of the vanishing cycles of the radial vanishing paths may equally well be considered inside $f^{-1}_{(a_2,\ldots,a_n)}(p)$ by moving the origin to $p$ and dragging the paths with it, as in Figure \ref{fmovingorigin}.
 
\begin{figure}
\includegraphics[width=0.8\textwidth]{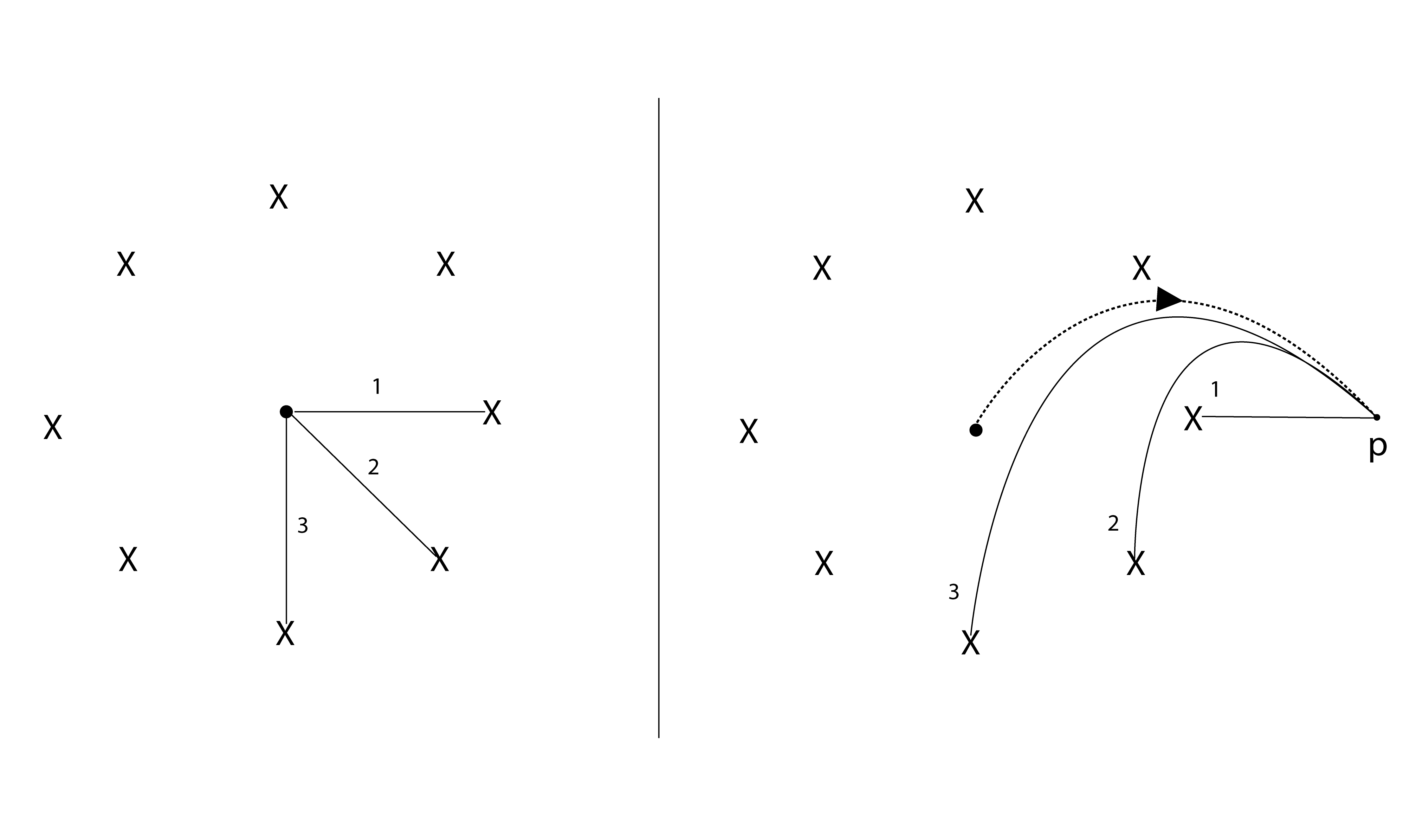}
\caption{Dragging the radial vanishing paths outside the circle.}
\label{fmovingorigin}
\end{figure}
 
 The final step involves relating the intersection numbers of the two systems of vanishing paths as in Figure \ref{fkoszul}.
 
\begin{figure}
\includegraphics[width=0.8\textwidth]{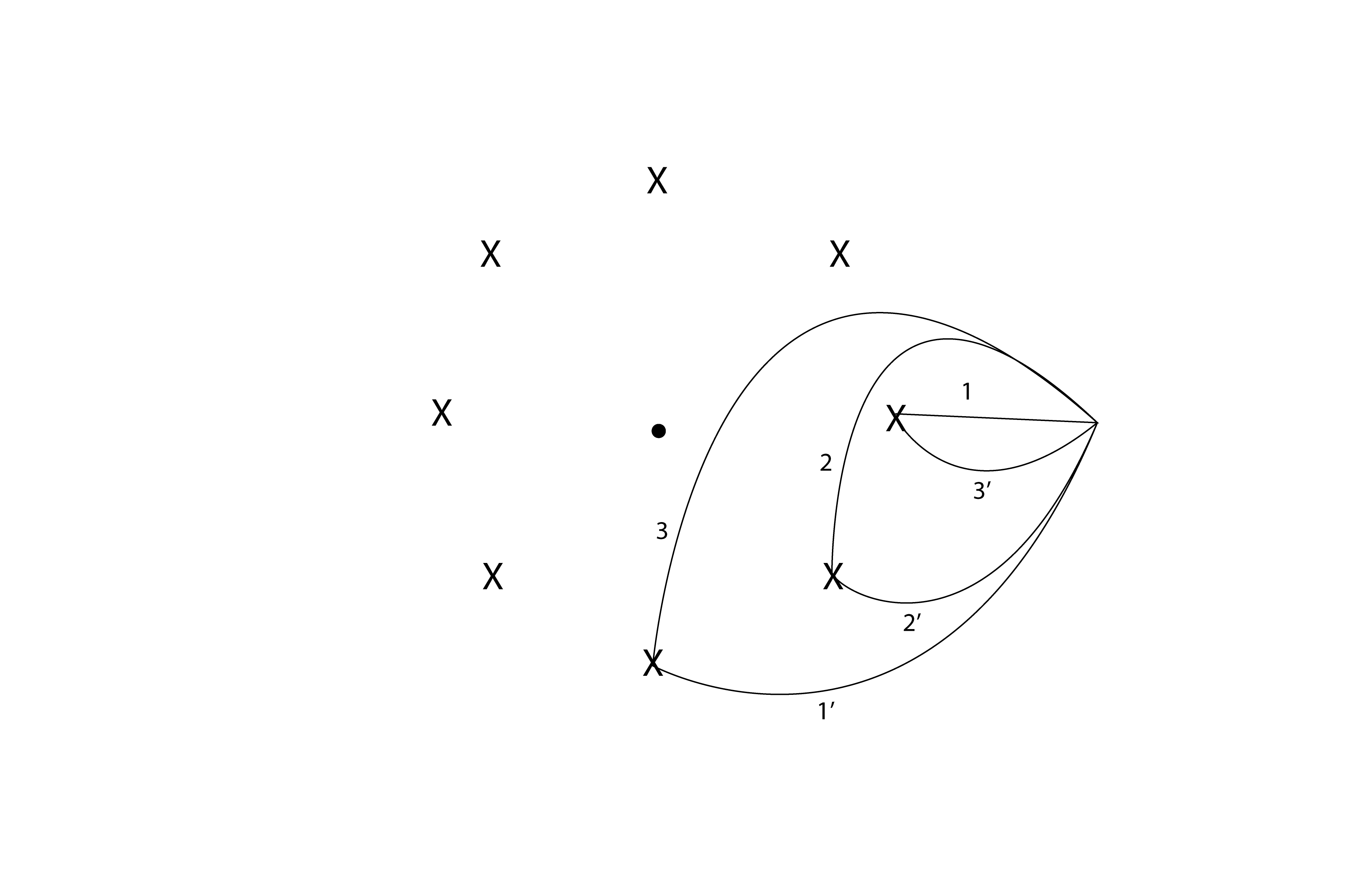}
\caption{Dual systems of vanishing paths.}
\label{fkoszul}
\end{figure}
 
 These two systems are ``dual'' to each other as in Figure 18.4 of \cite{sbook}, which is the basis of our induction (Section \ref{ss2.6}).
 
 \subsection{An inductive description of the matrices in the Orlik Randell conjecture}\label{ss1.4} Let $N_k$ be the $k\times k$ regular nilpotent matrix\[(N_k)_{i,j} = \delta_{i,j-1}.\] 
 
 \begin{definition}\label{s1defrainbow} A $k\times k$ matrix $A$ is called a \textbf{rainbow matrix} if there exists integers $\beta_1,\ldots,\beta_{k-1}$ such that: 
 \[A = Id_{k} + \sum^{k-1}_{i=1} \beta_i\cdot N^i_k.\]
We call $\beta_1,\ldots,\beta_{k-1}$ the \textbf{colors} of $A$. 
\end{definition}
Note that the inverse of a rainbow matrix is also a rainbow matrix. 
Hence, the matrix in Equation \ref{seifertmatrix} is equivalently defined as \begin{align}S(a_1,\ldots,a_n):= \prod_{i=0}^n (1-N_{\mu(a)}^{r_i})^{(-1)^{i+n-1}},\end{align}
using the substitution homomorphism $\mathbb{C}[[t]]\to Mat(\mu(a),\mu(a))$, sending $t$ to $N_{\mu}$.

Let $k<l$ be positive integers. We define the \textbf{$l$-rainbow extension }of a $k \times k$ rainbow matrix with colors $\beta_1,\ldots,\beta_{k-1}$ to be the $l\times l $ rainbow matrix with colors $\beta_1,\ldots,\beta_{k-1},0,\ldots,0$.

We have the following inductive description of matrices $S(a_1,\ldots,a_n)$, for $a_i\geq 2$,
\begin{itemize}
    \item $S(a_1) = Id - N_{a_1-1}.$ 
    \item For $n>1$, $S(a_1,\ldots,a_n) $ is obtained from $S(a_2,\ldots,a_n)$ in three steps.
    \begin{enumerate}
        \item invert $S(a_2,\ldots,a_n)$ to get $A$, which is a rainbow matrix.
        \item $\mu(a_1,\ldots,a_n)$-rainbow extend $A$ to get $B$.
        \item change the $\mu(a_2,\ldots,a_n)^{\text{th}}$color of $B$ from 0 to $(-1)^n$ to get $S(a_1,\ldots,a_n)$.
    \end{enumerate}
    \end{itemize}
    \begin{example} To get $S(2,3)$ from $S(3)$ we use the following procedure:
    
    \begin{itemize}
        \item $S(3) = \begin{pmatrix}
        1&-1\\
        0&1\\
        \end{pmatrix}$
        \item $A = \begin{pmatrix}
        1& 1\\
        0& 1\\
        \end{pmatrix}$
        \item$B = \begin{pmatrix}
        1&1&0&0\\
        0&1&1&0\\
        0&0&1&1\\
        0&0&0&1\\
        \end{pmatrix}$
        \item$S(2,3) = \begin{pmatrix}
        1&1&1&0\\
        0&1&1&1\\
        0&0&1&1\\
        0&0&0&1\\
        \end{pmatrix}$
    \end{itemize}
    \end{example}
   
    We will of course explain the details of how the two inductive procedures that we explained in Sections \ref{ss1.3} and \ref{ss1.4} correspond to each other. 
    
    \begin{remark}
    We are currently investigating how to categorify this inductive description. From a conceptual viewpoint the trickiest part is the step (3) from the inductive procedure. The results of \cite{ssus} are relevant here.
    \end{remark}
    
%
%
%
%
    Here is a summary of the paper. In Section \ref{s2}, we discuss some definitions and basic statements from Picard-Lefschetz theory. In Section \ref{s3}, we analyze the fibrations $p_a+z_1$ and $f_a$, including their relationship to each other. We prove Proposition \ref{s1prop1} and set the stage for the computation of Seifert matrices. In Section \ref{s4}, we finish the proof of Theorem \ref{s1thm2}.
    
    There are two appendices. Appendix A is about explicit computations of critical values and points. In Appendix B, we provide our Mathematica code used in discovering Proposition \ref{s1prop1}. Upon the referee's request, we have also added an Appendix C about the construction of the variation pairing in the first revision.
    
\subsection{Acknowledgements} I thank Yanki Lekili for discussions regarding Remark \ref{longremark}, and the anonymous referee for their careful reading of the paper.

\section{Picard-Lefschetz theory}\label{s2}

For this section, we assume a familiarity with Picard-Lefschetz theory as discussed in Sections 15d,e and 16 of \cite{sbook}. Even though we do not really take into account the symplectic geometry and do not touch upon Floer theory, which makes this reference a bit of an overkill, we are not aware of a more elementary reference that discusses matching cycles and Lefschetz bifibrations. The reader would also need to be familiar with more classical notions from singularity theory as explained for example in Chapter 5 of \cite{ebeling}.  As mentioned we will also use the variation pairing discussed in Section 1.1 of \cite{svar}. We will redefine the notions we use in this paper for clarity, but the proofs will be omitted.

\subsection{Sign conventions on intersection form in middle dimension}\label{ss2.1}
Let $X$ be a smooth complex $n$-dimensional affine variety with its natural orientation. Let us denote the intersection pairing on its middle dimensional homology $H_n(X)$ by $<v,w>$. We  define: $$v\cdot w:=(-1)^{\frac{n(n+1)}{2}}<v,w>\in\mathbb{Z}, \text{ for } v,w\in H_n(X).$$

If $v$ and $w$ are represented by closed oriented submanifolds $V$ and $W$, which are transverse to each other, then the sign of an intersection is the sign of the ordered basis $$(w_1,v_1,\ldots,w_n,v_1)$$
for the $v\cdot w$ pairing, where $(v_1,\ldots ,v_n)$ and $(w_1,\ldots ,w_n)$ are positively oriented bases of $V$ and $W$ at the intersection point.

For an explanation of this sign change see Convention 1.3 of \cite{svar}. 


\subsection{Tame maps}\label{sstame} Let us consider a map of smooth manifolds $g: X\to B$, which is a submersion away from a compact set $S\subset X$. Let us take an Ehresmann connection (i.e. a choice of a horizontal subbundle) $E$ on $X-S\to B$. We call $(g,S, E)$ \textbf{tame}, if for any complete vector field on $B$, its unique lift to $X-S$ satisfies the property that every integral curve $\gamma: [0,\epsilon)\to X-S$, $\epsilon\in\mathbb{R}_{>0}$, extends to a continuous map $[0,\epsilon]\to X$. In this paper, we will always assume that $S$ is the set of critical points of $g$.

Of particular importance to us are the Ehresmann connections defined by connection $2$-forms. Namely, assume that we have a $2$-form $\Omega$ on $X$, whose restriction to $g^{-1}(b)\cap (X-S)$ is non-degenerate for every $b\in B$. Taking $\Omega$-kernels to tangent spaces of fibers we obtain an Ehresmann connection on $X-S\to B$. 

A slight modification of this constrution is as follows. Let us assume that we have a one form $\Theta$ on $X$ whose restriction to $g^{-1}(b)\cap (X-S)$ is a contact form for every $b\in B$. Then we can consider the kernel of $d\Theta$ restricted to the distribution defined by $\Theta$ to obtain an Ehresmann connection.

\subsection{Vanishing and matching paths}\label{ss2.2}

Let $f: X^{2n}\to \Sigma$ be a Lefschetz fibration with finitely many critical points with different critical values. We will always be working in the cases where $X$ is a smooth complex quasi-projective variety, $\Sigma=\mathbb{C}$ or $\mathbb{P}^1$, and $f$ is a regular function. Moreover, $X$ will always come explicitly embedded in some weighted projective space cross affine space (disjoint from the orbifold points), and we will always use the restriction of the standard symplectic structure to $X$. We then use this symplectic structure to define an Ehresmann connection as in the previous section. Whenever we say Lefschetz fibration (or LF for short) we will assume all of these plus tameness as defined in the previous section.

Let us fix a regular value $p$ of $f$. Let $\gamma$ be an embedded path $[0,1]\to \mathbb{C}$ connecting $p$ to a critical value, which does not intersect the set of critical values in its interior. We call such a path a \textbf{vanishing path}. 

A vanishing path $\gamma$ determines a sphere $S_{\gamma}$ (called the \textbf{vanishing sphere}), which is a Lagrangian submanifold of $f^{-1}(p)$. If $\gamma$ is homotopic to $\gamma'$ among vanishing paths, then $S_{\gamma}$ and $S_{\gamma'}$ are Hamiltonian isotopic to each other. We call the homology class of $S_{\gamma}$, which is determined only up to a sign, the \textbf{vanishing cycle}. Once an orientation is chosen we call the resulting class, the \textbf{oriented vanishing cycle}.

Dependence of the isotopy class of $S_{\gamma}$ on the homotopy class of the vanishing path $\gamma$ is controlled by the Picard-Lefschetz transformations. The main point is that if $S$ is a Lagrangian sphere in $f^{-1}(p)$, and $\alpha_{\gamma}$ is a loop starting and ending at $p$, and tightly encircling $\gamma$ in the counterclockwise direction, then the parallel transport of $S$ along $\alpha_{\gamma}$ is Hamiltonian isotopic to the right handed (positive) Dehn twist of $S$ along $S_{\gamma}$: $\tau_{S_{\gamma}}S$.

If we orient $S$ in the above discussion then $\tau_{S_{\gamma}}S$ gets an induced orientation, and we have the Picard-Lefschetz formula:
\begin{align*}
[\tau_{S_{\gamma}}S] =[S]-([S_{\gamma}]\cdot [S])[S_{\gamma}].
\end{align*}

Another geometric object that we can assign to a vanishing path is its \textbf{Lefschetz thimble}, which is an embedded Lagrangian disk in $X$ with boundary on $f^{-1}(p)$. This can be thought of as the union of the vanishing spheres for all the points on the vanishing path. We call the homology class (up to a sign) it defines in $H_n(X,f^{-1}(p))$ the \textbf{Lefschetz cycle} and after an orientation (i.e. a generator of $H_n(D^n,\partial D^n)$) is chosen we call it  \textbf{oriented Lefschetz cycle}.

Finally, if we have an embedded path $m:[0,1]\to \mathbb{C}$ connecting a critical value to a different critical value, which does not intersect the set of critical values in its interior, so that the two vanishing spheres at $m(1/2)$ are isotopic to each other, then we call $m$ a \textbf{matching path}. Given the homotopy class of such an isotopy of spheres, we can talk about the \textbf{matching cycle}, which is an element of $H_n(X)$ up to the usual sign ambiguity; and an \textbf{oriented matching cycle}, which is an honest element of $H_n(X)$.

\begin{remark}\label{matchinghomology}
 In this paper, the fibers of all LF's will always be affine varieties. In this case, because of the Lefschetz theorem, the matching cycle is actually independent of the choice of the isotopy of spheres.
\end{remark}


\subsection{Distinguished bases}\label{ss2.3}

Let $f: X^{2n}\to \mathbb{C}$ be a Lefschetz fibration, $p$ be a regular value, and $l:[0,\infty)\to \mathbb{C}$ be a proper embedding, where $l(0)=p$ and $l$ does not intersect the set of critical values. We call such an $l$ a \textbf{branch cut} at $p$.

Now assume that there are $k$ critical values of $f$ and let $\gamma_1,\ldots \gamma_k$ be a sequence of vanishing paths (with respect to $p$), one for each critical value, which in addition do not intersect each other and $l$ in their interior and are pairwise transverse to each other at $p$. Moreover, the order in which the vanishing paths are written (left to right) matches the clockwise (starting at $l$) order of the directions along which the paths approach $p$. We call this a \textbf{basis of vanishing paths}.  It follows from the Lefschetz theorem that the corresponding Lefschetz cycles provides a basis of $H_n(X,f^{-1}(p))$ (well defined up to the usual $\pm$ indeterminacy of each cycle). We call this a \textbf{distinguished basis}. Of course, the distinguished basis only depends on the isotopy class of the basis of vanishing cycles.

If we have a sequence of vanishing paths (for $p$ and $l$) all with different critical values that is an ordered subsequence of a basis of vanishing paths, we call it a \textbf{system of vanishing paths}.

The primary operations to change the isotopy class of a basis of vanishing paths are the left and right mutations. These both modify adjacent (in the linear order) pairs of paths and they are inverses of each other. Note that we always make sense of left and right looking at the linear order on the page of $\gamma_1,\ldots \gamma_k$. If the path that changed moves to the left we call it the left mutation. In our conventions (same as \cite{sbook}), for the left mutation the changed vanishing sphere is modified by a positive Dehn twist, and for the right mutation by the inverse of a positive Dehn twist. 

Given a system of vanishing paths $\gamma_1,\ldots \gamma_r$, and orientations of Lefschetz thimbles $T_{\gamma_1},\ldots T_{\gamma_r}$, we can define the corresponding \textbf{Seifert matrix} by 

\[
    Seif_{i,j} \left( \gamma_{1}, \ldots , \gamma_{r} \right) = 
\begin{cases}
\lbrack S_{\gamma_i} \rbrack \cdot \lbrack S_{\gamma_j} \rbrack, &  i < j\\
1,  & i=j \\
0 , & \text{otherwise,}
\end{cases}
\] where we used the boundary orientation for the vanishing spheres $S_{\gamma_1},\ldots S_{\gamma_r}$.

If $\gamma_1,\ldots \gamma_k$ is basis of vanishing paths, we define, for every $i$ and $j$, $$[T_i]\cdot_l [T_j]:= Seif_{i,j} \left( \gamma_{1}, \ldots , \gamma_{r} \right),$$ and extend $\cdot_l$ to a bilinear pairing on $H_n(X,f^{-1}(p))$. We call $Seif(\gamma_{1}, \ldots , \gamma_{k})$ the \textbf{Seifert matrix} of $\gamma_1,\ldots \gamma_k$.

\begin{lemma}\label{lemmaseifert}
\begin{itemize}
\item The bilinear pairing $\cdot_l$ on $H_n(X,f^{-1}(p))$ does not depend on the choice of the basis of vanishing paths and the orientations of the Lefschetz thimbles. We call this the \textbf{Seifert pairing}.
\item On the image of the injection $H_n(X)\to H_n(X,\pi^{-1}(p))$ $\cdot_l$ agrees with the pairing $\cdot$. \qed
\end{itemize}

\end{lemma}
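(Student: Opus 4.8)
The plan is to give a formula for $\cdot_l$ that is manifestly free of the auxiliary choices, namely as an honest intersection number after pushing one argument ``off the reference fibre along $l$'', and then to read both assertions off this formula. Fix $f\colon X\to\mathbb C$, the regular value $p$, and the branch cut $l$; temporarily fix also one basis of vanishing paths $\gamma_1,\dots,\gamma_k$ together with orientations of the thimbles, and let $\cdot_l$ be the resulting bilinear pairing on $H_n(X,f^{-1}(p))$. Choose $q=l(t_0)$ for small $t_0>0$, so that $f^{-1}(q)$ is a regular fibre disjoint from $f^{-1}(p)$. Given $b\in H_n(X,f^{-1}(p))$ represented by a relative cycle $\beta$ with $\partial\beta\subset f^{-1}(p)$, I would parallel transport $\partial\beta$ along the compact arc $l|_{[0,t_0]}$ into $f^{-1}(q)$ and take $\beta^{+}$ to be $\beta$ together with the trace of this isotopy; its class $b^{+}\in H_n(X,f^{-1}(q))$ depends only on $b$ and $l$. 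Since the fibres $f^{-1}(p)$ and $f^{-1}(q)$ are disjoint, there is a well-defined intersection number $a\bullet b^{+}\in\mathbb Z$ (with the sign normalisation of Section \ref{ss2.1}), and I set $\langle a,b\rangle_l:=a\bullet b^{+}$. The key point is the identity $\langle \cdot,\cdot\rangle_l=\cdot_l$ on $H_n(X,f^{-1}(p))$.

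Being bilinear, this identity only needs to be checked on the basis $[T_1],\dots,[T_k]$. The pushed-off thimble $T_j^{+}$ lies over the vanishing path obtained from $\gamma_j$ by sliding its endpoint from $p$ to $q$ along $l$; intersecting it with $T_i$ amounts to a local count near $p$ — governed by the clockwise-from-$l$ order of the $\gamma_i$ of Section \ref{ss2.3} — and near the critical point of $\gamma_j$, using the standard local model of a Lefschetz thimble. One finds exactly one transverse intersection contributing $+1$ when $i=j$, an intersection locus computing the fibrewise pairing $[S_{\gamma_i}]\cdot[S_{\gamma_j}]$ when $i<j$, and no intersection when $i>j$; that is, $\langle[T_i],[T_j]\rangle_l=Seif_{i,j}(\gamma_1,\dots,\gamma_k)$. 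This is the one step that takes real care: it is where all of the sign conventions of Sections \ref{ss2.1}--\ref{ss2.3} have to be reconciled (in particular the $(-1)^n$ discrepancy between the self-intersection normalisations in middle dimension for $X$ and for a fibre), and I expect it to be the main obstacle; it is essentially the computation underlying the Picard--Lefschetz formula recalled in Section \ref{ss2.2}. As an alternative route to the first assertion alone, one can instead verify that $\cdot_l$ is invariant under the elementary mutations of Section \ref{ss2.3}: any two bases of vanishing paths for $(p,l)$ are joined by finitely many mutations and orientation changes, under a mutation the Lefschetz cycles transform by an explicit integral change of basis while the vanishing spheres transform by Picard--Lefschetz, and substituting these into the definition of $Seif$ exhibits the new Seifert matrix as the corresponding congruence of the old one. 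Invariance under reversing the orientation of a thimble $T_i$ is in any case immediate, since it negates $[S_{\gamma_i}]$ and hence negates the $i$-th row and column of $(Seif_{i,j})$ off the diagonal while fixing the diagonal entry $1$ — exactly the transformation of the matrix of a fixed bilinear form under $[T_i]\mapsto-[T_i]$.

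Granting $\cdot_l=\langle \cdot,\cdot\rangle_l$, the right-hand side depends only on $f$, $p$ and $l$, which proves the first assertion. For the second, the long exact sequence of the pair identifies the image of $H_n(X)\to H_n(X,f^{-1}(p))$ with $\ker\big(\partial\colon H_n(X,f^{-1}(p))\to H_{n-1}(f^{-1}(p))\big)$. If $a$ and $b$ lie in this image then each may be represented by an \emph{absolute} cycle in $X$ (cap off the nullhomologous boundary inside $f^{-1}(p)$); for such $b$ there is nothing to slide in the construction of $b^{+}$, so $b^{+}$ is simply the image of that absolute class in $H_n(X,f^{-1}(q))$. Hence $a\cdot_l b=\langle a,b\rangle_l=a\bullet b^{+}$ is the intersection number in $X$ of two absolute cycles, i.e.\ $a\cdot b$ in the sense of Section \ref{ss2.1}, as claimed.
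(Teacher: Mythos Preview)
Your strategy --- replace $\cdot_l$ by an intersection number computed after pushing one argument's boundary off the reference fibre, then check the formula on thimbles --- is precisely the variation-pairing approach the paper invokes (spelled out in its Appendix C); the paper also mentions the Picard--Lefschetz/mutation alternative that you offer for the first item. So at the level of ideas you and the paper agree.

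There is, however, a genuine gap. You assert that because $f^{-1}(p)$ and $f^{-1}(q)$ are disjoint there is a well-defined intersection number $a\bullet b^{+}$ between classes in $H_n(X,f^{-1}(p))$ and $H_n(X,f^{-1}(q))$. This is false in general, and the paper's Appendix C closes with exactly this warning: there is no geometrically meaningful intersection pairing $H_n(X,\tilde Z)\times H_n(X,Z)\to\mathbb Z$ for disjoint submanifolds $Z,\tilde Z\subset X^{2n}$, the model counterexample being $X$ the plane and $Z,\tilde Z$ two disjoint pairs of points. In your situation a representative of $a$ may be homotoped, keeping its boundary in $f^{-1}(p)$, across the interior portion of $\beta^{+}$ (which does pass through $f^{-1}(p)$), changing the naive count; so your $\langle\,\cdot\,,\,\cdot\,\rangle_l$ is not yet defined, and hence proves nothing about independence. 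The paper's remedy is to restrict first to $M=f^{-1}(D)$ for a disk $D$ in the base with $p$ on $\partial D$, so that the relevant fibres lie in $\partial M$; one then has a genuine pairing $H_n(M\setminus\tilde P,\tilde F)\times H_n(M\setminus P,F)\to\mathbb Z$ via Lefschetz duality, and this is what makes the push-off argument work. Once you make that amendment, your verification on thimbles and your deduction of the second item go through.
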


The proof of this lemma can be obtained using the Picard-Lefschetz formula, but there is a better way to see it using the variation pairing as defined in Section 1.1 of \cite{svar} and also our Appendix C. Note that the variation pairing and the Seifert pairing are the same pairing in the end, but the variation pairing can be defined without any reference to bases of vanishing paths.

\begin{remark}
Let us fix a regular value $p\in\mathbb{C}$ as above. To define the variation pairing one fixes a disk with $p$ on the boundary, which contains all critical values. Up to homotopy, the choices of such disks are in a natural one-to-one correspondance with the choices of branch cuts as above. It is clear how to associate a branch cut to a given disk using the Jordan curve theorem. In the other direction, given a branch cut $l$, we take a disk that contains all the critical points and $p$ whose boundary intersects $l$ at one point. Then, we remove from this disk a narrow tube which follows the branch cut starting from $p$. It is clear that (up to homotopy) these operations are inverses of each other. A slightly more careful argument shows that in fact there is a one-to-one correspondance between pairs of branch cuts and admissible systems of vanishing paths, and pairs of disks and systems of vanishing paths that lie inside the disk. Ultimately the disk is a more convenient choice, as it makes sense for LF's over more general bases.

We also note that the Seifert pairing, or equivalently, the variation pairing does not depend on the choice of $p$ and $l$ (or $p$ and the disk) either. Given another choice $p'$ and $l'$ we can easily find an isotopy of the base relative to the critical values that takes $p$ and $l$ to $p'$ and $l'$. We "drag" the basis of vanishing paths along to identify $H_n(X,f^{-1}(p))$ with $H_n(X,f^{-1}(p'))$ and compare the two Seifert pairings using the first part of Lemma \ref{lemmaseifert}. Note that the isotopy is not canonical, and hence the identifications are not canonical. The situation is similar for the variation pairing.
\end{remark}

In the case where $X=\mathbb{C}^n$, the homology exact sequence shows that $H_n(X,f^{-1}(p))$ is canonically isomorphic to $H_{n-1}(f^{-1}(p))$. This is used to construct the \textbf{Seifert form} on $H_{n-1}(M_a=p_a^{-1}(1))$ with the notation as in the introduction. Using the variation pairing approach, one can define the Seifert form directly by taking a disk in the base of $p_a:\mathbb{C}^n\to \mathbb{C}$ that contains the origin; there is no need to perturb (see Appendix C). To use the Seifert pairing approach, one needs to consider the one parameter ($\epsilon \in\mathbb{C}$) family of tame holomorphic maps $p_a^{\epsilon}:= p_a+\epsilon z_1 :\mathbb{C}^n\to \mathbb{C}$. For $\epsilon\neq 0$, we have our Seifert form on the regular fibers of $p_a^{\epsilon}$. Using parallel transport one can then define the desired form on $H_{n-1}(M_a)$. The independence on the path boils down to again part (1) of Lemma \ref{lemmaseifert}.

Finally, the notion of a distinguished basis in $H_{n-1}(M_a)$ is perhaps the most confusing part of the story as it cannot be defined without perturbing to a Lefschetz fibration. Since $p_a+z_1:\mathbb{C}^n\to \mathbb{C}$ is a Lefschetz fibration it is easy to define this notion on $H_{n-1}((p_a+z_1)^{-1}(A))$ for $A$ a regular value. Now again one has to use the parallel transport diffeomorphisms in the previous paragraph to get the desired notion on $H_{n-1}(M_a)$.

\subsection{Lefschetz fibrations with cyclic symmetries}\label{ss2.4}

Let $\psi_n:\mathbb{C}\to \mathbb{C}$ be the map $z\mapsto e^{2\pi i/n}z$, for $n\in\mathbb{Z}_{> 0}$. Also let $X\subset \mathbb{C}^n$ be an affine variety, where $\mathbb{C}^n$ has coordinates $x_1\ldots ,x_n$. Then a map $X\to X$ is called a \textbf{diagonal automorphism} if it is the restriction of a map $(x_1,\ldots x_n)\mapsto (\xi_1x_1,\ldots \xi_nx_n)$, where $\xi_i\in\mathbb{C}$ with $|\xi_i|=1$.

We call a LF $\pi: X\to \mathbb{C}$ and a diagonal automorphism $\Phi:X\to X$ an \textbf{LF with cyclic symmetry of order $n$}, if\begin{itemize}
\item $\Phi^n=1$
\item $\pi\circ \psi_n=\Phi\circ \pi$
\item $0\in\mathbb{C}$ is a regular value of $\pi$
\item $\pi$ has $n$ critical values.
\end{itemize}

Now, let $\pi: X\to \mathbb{C}$ and $\Phi:X\to X$ be an LF with cyclic symmetry of order $n$. Moreover, let $\rho: \pi^{-1}(0)\to \mathbb{C}$ be an LF such that $\Phi^{-1}|_{\pi^{-1}(0)}$ is a cyclic symmetry of order $m$. Note that $m$ necessarily divides $n$.

Let us choose a radial branch cut $l$ from the origin. Let $\gamma_1,\ldots \gamma_n$ be the basis of vanishing paths, which are all radial. Assume that the vanishing cycle $\Delta_1$ of $\gamma_1$ is the matching cycle of a matching path $m_1$ in the base of $\rho$. The following is straightforward.

\begin{lemma} The vanishing cycle $\Delta_i$ of $\gamma_i$, for $1\leq i\leq n$, is the matching cycle of the matching path $m_i:=\psi^{-i+1}_n(m_1)$ in the base of $\rho$.\qed\end{lemma}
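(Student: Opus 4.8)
The plan is to deduce the statement from one observation. A diagonal automorphism is unitary, hence preserves the restriction to $X$ of the standard K\"{a}hler form $\Omega$; and by Section~\ref{ss2.2} the Ehresmann connection underlying all of our Lefschetz fibrations is the one whose horizontal spaces are the $\Omega$-orthogonal complements of the fiber tangent spaces. Since $\Phi$ moreover covers the base rotation $\psi_n$, it carries fibers of $\pi$ to fibers of $\pi$, so $\Phi_*$ sends fiber tangent spaces to fiber tangent spaces and, being $\Omega$-preserving, sends their $\Omega$-orthogonal complements to one another; that is, $\Phi$ preserves the horizontal distribution. Consequently, for any path $\beta$ in $\mathbb{C}$ missing the critical values, $\Phi$ conjugates $\pi$-parallel transport along $\beta$ to $\pi$-parallel transport along $\psi_n\circ\beta$. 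Running this near a critical point — where the vanishing sphere is by definition obtained by parallel-transporting a local vanishing cycle, and where $\Phi$ permutes the critical points compatibly with $\psi_n$ — shows that $\Phi$ carries the vanishing sphere of a vanishing path $\gamma$ to that of $\psi_n\circ\gamma$ (up to Hamiltonian isotopy). Because $\psi_n(0)=0$ and each $\gamma_i$ is the radial ray from $0$ to the $i$-th critical value, and because the vanishing sphere of a single radial path is insensitive to the choice of branch cut, it follows that $\Phi$ restricts to a symplectomorphism of $\pi^{-1}(0)$ that cyclically shifts $S_{\gamma_1},\dots,S_{\gamma_n}$; reading the direction of the shift off the clockwise ordering convention of Section~\ref{ss2.3} gives $S_{\gamma_i}\simeq\Phi^{-(i-1)}(S_{\gamma_1})$ inside $\pi^{-1}(0)$, and in particular $\Delta_i=\Phi^{-(i-1)}_*\Delta_1$.

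Next I would feed this back into the fibration $\rho$ on the central fiber. By hypothesis $\Phi^{-1}|_{\pi^{-1}(0)}$ is itself a diagonal automorphism realizing a cyclic symmetry of order $m$ of $\rho$, hence a symplectomorphism of $\pi^{-1}(0)$ covering a rotation of the base of $\rho$; applying the parallel-transport argument above to $\rho$ in place of $\pi$ shows that this map sends the matching sphere over a matching path to the matching sphere over the rotated path, and hence, after Remark~\ref{matchinghomology}, sends its matching cycle to that of the rotated path. Iterating $i-1$ times and recalling that $\Delta_1$ is the matching cycle of $m_1$, the class $\Delta_i=\Phi^{-(i-1)}_*\Delta_1=\bigl(\Phi^{-1}|_{\pi^{-1}(0)}\bigr)^{i-1}_*\Delta_1$ is realized as the matching cycle of $m_1$ rotated $i-1$ times by the cyclic symmetry of $\rho$, i.e. of the path $m_i$ in the statement. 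Orientations cost nothing: either treat vanishing and matching cycles with their customary $\pm$ ambiguity, or fix orientations of Lefschetz thimbles and note that the biholomorphism $\Phi$ transports them consistently.

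The only genuinely delicate point is the bookkeeping of the two rotations rather than any of the geometry. Two things must be pinned down: first, that $\Phi$ — a priori only known to preserve the ambient $2$-form — truly intertwines the two parallel transports, which is immediate once one observes that the connection is cut out fiberwise by that form and $\Phi$ is a fiber-preserving bundle map over $\psi_n$; and second, the precise direction and step size of the rotation that $\Phi^{-1}|_{\pi^{-1}(0)}$ induces on the base of $\rho$, together with whether the clockwise ordering of $\gamma_1,\dots,\gamma_n$ makes $\Phi$ raise or lower the index, so that the outcome matches the exponent $-i+1$ displayed in the statement. This is pure sign-chasing, though with several interacting signs, so I expect it to be the one step that rewards care; the tidiest way to quarantine it is to establish the convention-free homological identity $\Delta_i=\Phi^{-(i-1)}_*\Delta_1$ first, and only then translate its right-hand side into the language of matching paths of $\rho$.
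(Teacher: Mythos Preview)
Your argument is correct and is precisely the natural unpacking of what the paper has in mind; note that the paper does not actually give a proof of this lemma, marking it with a bare \qed\ after calling it ``straightforward.'' Your observation that a diagonal automorphism preserves the ambient K\"ahler form and hence the Ehresmann connection, so that it intertwines parallel transport and therefore permutes vanishing (and matching) spheres compatibly with the base rotation, is exactly the content that makes the lemma immediate.
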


When we talk about the Seifert matrix of $\gamma_1,\ldots \gamma_n$, we choose the orientations such that the Lefschetz thimble $T_1$ of $\gamma_1$ is oriented arbitrarily, and then $T_i$ of $\gamma_i$ is oriented  using the fact that $T_i=\Phi^{i-1}(T_1)$. It is easy to see that the resulting Seifert matrix $Seif:=Seif(\gamma_1,\ldots \gamma_n)$ is independent of the orientation of $T_1$. Let us call this our \textbf{orientation convention}. The following is again straightforward.

\begin{lemma}
Let $N$ be the regular nilpotent matrix of size $n$, and also let for $2\leq i\leq n$, $$\alpha_i=\Delta_1\cdot \Delta_i.$$ Then, we have\[\pushQED{\qed} 
Seif=Id+\alpha_2N+\ldots \alpha_n N^{n-1}.\qedhere
\popQED\]
\end{lemma}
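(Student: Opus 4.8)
\emph{Proof proposal.} The plan is to use the order-$n$ cyclic symmetry $\Phi$ to reduce every entry of $Seif$ to an entry of its first row, and then to match the result term by term against the regular-nilpotent expansion on the right-hand side.

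First I would record the two facts about $\Phi$ that the argument needs. Since $\Phi$ is a diagonal automorphism with $\Phi^n=1$, it is the restriction of a diagonal $\mathbb{C}$-linear unitary map of the ambient space; in particular it is holomorphic on $X$ and a symplectomorphism for the restricted standard form. Because $\psi_n$ fixes $0$, $\Phi$ preserves the fiber $\pi^{-1}(0)$, and as the restriction of a holomorphic map to a complex submanifold it preserves its complex orientation. Hence the induced map $\Phi_*:H_{n-1}(\pi^{-1}(0))\to H_{n-1}(\pi^{-1}(0))$ preserves the intersection pairing, and therefore also the sign-twisted pairing $\cdot$ of Section \ref{ss2.1} (the twist is a global sign, constant on the fiber). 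On the other hand, $T_i=\Phi^{i-1}(T_1)$ — the relation used to set up the orientation convention above — holds as \emph{oriented} Lefschetz thimbles, so $[T_i]=\Phi^{i-1}_*[T_1]$ in $H_n(X,\pi^{-1}(0))$. Taking boundaries, and using that $\Phi$ restricts to $\pi^{-1}(0)$ so that $\partial$ intertwines the two $\Phi_*$'s and respects boundary orientations, this gives
\[
\Delta_i=\Phi^{i-1}_*\,\Delta_1\qquad\text{in }H_{n-1}(\pi^{-1}(0)),\quad 1\le i\le n,
\]
where each $\Delta_i=[S_{\gamma_i}]$ carries its boundary orientation.

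Next the computation is immediate. For $i<j$ write $\Delta_j=\Phi^{j-1}_*\Delta_1=\Phi^{i-1}_*\bigl(\Phi^{j-i}_*\Delta_1\bigr)=\Phi^{i-1}_*\Delta_{j-i+1}$ and $\Delta_i=\Phi^{i-1}_*\Delta_1$, so, since $\Phi^{i-1}_*$ preserves $\cdot$,
\[
Seif_{i,j}=\Delta_i\cdot\Delta_j=\bigl(\Phi^{i-1}_*\Delta_1\bigr)\cdot\bigl(\Phi^{i-1}_*\Delta_{j-i+1}\bigr)=\Delta_1\cdot\Delta_{j-i+1}=\alpha_{j-i+1}.
\]
Thus $Seif_{i,j}$ equals $1$ for $i=j$, equals $\alpha_{j-i+1}$ for $i<j$, and $0$ for $i>j$. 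Since $(N^k)_{i,j}=\delta_{i,j-k}$, the $(i,j)$ entry of $Id+\alpha_2N+\cdots+\alpha_nN^{n-1}$ is $\delta_{i,j}+\sum_{k=1}^{n-1}\alpha_{k+1}\delta_{i,j-k}$, which matches the three cases above, so the two matrices coincide.

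The only step I expect to require genuine care, as opposed to formal manipulation, is the orientation bookkeeping in the second paragraph: checking that the equality $T_i=\Phi^{i-1}(T_1)$ is compatible with the boundary orientations that enter the definition of $Seif$, and that the sign-twisted pairing $\cdot$ itself — not merely the untwisted intersection pairing up to a global sign — is invariant under $\Phi_*$. Once those are pinned down, the rest is routine Picard–Lefschetz bookkeeping.
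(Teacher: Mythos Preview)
Your argument is correct and is exactly the computation the paper has in mind: the lemma is stated there with a terminal $\qed$ and no proof, the preceding sentence declares it ``again straightforward,'' and the orientation convention $T_i=\Phi^{i-1}(T_1)$ you rely on is introduced precisely so that $\Phi_*$-invariance of the pairing on $\pi^{-1}(0)$ reduces every entry $Seif_{i,j}$ to $\Delta_1\cdot\Delta_{j-i+1}$. Your caution about the orientation bookkeeping is well placed but the checks go through as you outline.
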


\subsection{Pullbacks of Lefschetz fibrations by branched covers}\label{ss2.5}

Let $X$ be a smooth complex affine variety and $\pi : X \rightarrow \mathbb{C}$ be a Lefschetz fibration. Let us also assume that the critical values of $\pi$ are $k^{th}$ roots of unity for some positive integer $k$, for simplicity.

Let $pow_{d} : \mathbb{C} \rightarrow \mathbb{C}$ be the map $z \mapsto z^{d}$, for $d$ a positive integer. Then, there exists a unique affine variety $X_{d}$ and a Lefschetz fibration $\pi_{d} : X_{d} \rightarrow \mathbb{C}$ with a map $X_{d} \rightarrow X$ such that:

\begin{align}
\xymatrix{ 
X_{d}\ar[r]\ar[d]_{\pi_d}& X \ar[d]\\ \mathbb{C}\ar[r]_{pow_d}&\mathbb{C},}
\end{align}is a pullback diagram of affine varieties.
%
%

\begin{example} Let $X = \{ x f_{1} \left( y_{1}, \ldots,y_{n}\right) + f_{2} \left( y_{1}, \ldots , y_{n} \right) = 0 \} \subset \mathbb{C}^{n+1}_{x,\vec{y}}$ and $\pi$ is the projection to the $x$ coordinate. Then $X_{d} = \{ z^{d} f_{1} \left( y_{1}, \ldots,y_{n}\right) + f_{2} \left( y_{1}, \ldots , y_{n} \right) = 0 \} \subset \mathbb{C}^{n+1}_{z,\vec{y}}$ and $\pi_{d}$ is the projection to the $z$ coordinate. \end{example}

We now list some straightforward properties of $\pi_{d} : X_{d} \rightarrow \mathbb{C}$.
\begin{itemize}
\item The critical values of $\pi_{d}$ are the $\left( d \cdot k \right)^{th}$ roots of unity
\item $\pi_{d}^{-1} \left(0\right) = \pi^{-1} \left(0\right)$
\item The vanishing cycle of a radial path from a critical value to the origin in the base of $\pi_{d}$ is exactly the same as the arc of the radial path that is its image under $pow_{d}$.
\item There is an action of $\mathbb{Z}/d\mathbb{Z}$ on $X_d$ which makes $\pi_d$ equivariant.
\end{itemize}




%
%

\subsection{Duality}\label{ss2.6}

Let $\pi : X \rightarrow \mathbb{C}$ be a Lefschetz fibration with $k$ critical points with values on a circle centered at the origin.

Let $p_{1},\ldots, p_{k}$ be the critical values where $p_{i+1}$ is the critical value clockwise adjacent to $p_{i}$, for $1 \le i \le k-1$. Let us also take a radial ray $l$ from the origin passing through the open arc between $p_{1}$ and $p_{k}$ as the branch cut.

Let $1\leq r \leq k$ and take radial paths $\gamma_{i}$ from $p_{i}$ to the origin, for each $1\le i \le r$,  as a system of vanishing paths. We also take a different system of vanishing paths $\widetilde{\gamma_{1}}, \ldots , \widetilde{\gamma_{r}}$ from the same critical values as follows:
\begin{itemize}
\item $\widetilde{\gamma_{i}}$ starts at $p_{r-i+1}$.
\item $\widetilde{\gamma_{i}}$ intersects the circle of critical values at exactly one point, which is required to be along the arc going clockwise from $l$ to $p_{1}.$
\end{itemize}

\begin{lemma}\label{s2lemma}
 Orient the Lefschetz thimbles of $\gamma_{1}, \ldots ,\gamma_{r}$ arbitrarily. Then, there exists a way of orienting the Lefschetz thimbles of $\widetilde{\gamma_{1}}, \ldots , \widetilde{ \gamma_{r}}$ such that the corresponding Seifert matrices $Seif$ and $\widetilde{Seif}$ are so that $\left( Seif^{-1}\right)_{i, j} = \left( \widetilde{Seif} \right)_{r-j+1, r-i+1}$.
 \end{lemma}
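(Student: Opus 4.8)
The plan is to reduce the statement to a direct application of the Picard–Lefschetz formula by carefully tracking how the two systems $\gamma_1,\ldots,\gamma_r$ and $\widetilde{\gamma_1},\ldots,\widetilde{\gamma_r}$ see each other. First I would set up the geometry: the system $\widetilde{\gamma_i}$ is, up to homotopy rel critical values and rel $l$, obtained from the radial system by a specific braid move that sweeps each radial path from the right of the branch cut all the way around counter-clockwise so that it enters the disk from the arc between $l$ and $p_1$; equivalently, one can keep the paths fixed and drag the base point and branch cut, as in the discussion following Lemma \ref{lemmaseifert}. The key structural observation is that $\widetilde{\gamma_i}$ is (isotopic to) the path obtained from $\gamma_{r-i+1}$ by pushing it across exactly the paths $\gamma_{r-i+2},\ldots,\gamma_r$ (those with larger index in the $\gamma$-ordering that still belong to our length-$r$ system), and across nothing else among $\gamma_1,\ldots,\gamma_r$, because the radial structure means those are precisely the paths lying in the angular sector that $\widetilde{\gamma_i}$ must traverse. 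This is exactly the picture in Figure \ref{fkoszul}, and it matches Figure 18.4 of \cite{sbook}.

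Second I would carry out the bookkeeping of the vanishing cycles. Writing $\Delta_i = [S_{\gamma_i}]$ for the oriented vanishing cycles and $\widetilde\Delta_i = [S_{\widetilde{\gamma_i}}]$, the braid move above expresses each $\widetilde\Delta_i$ as $\Delta_{r-i+1}$ acted on by a composition of Dehn twists $\tau_{\Delta_{r-i+2}}\cdots\tau_{\Delta_r}$ (in the appropriate order), so by iterated application of the Picard–Lefschetz formula $\widetilde\Delta_i$ is an explicit upper-unitriangular integer combination of $\Delta_{r-i+1},\Delta_{r-i+2},\ldots,\Delta_r$. Assembling these over $i = 1,\ldots,r$ gives a matrix identity: if $P$ is the change-of-basis matrix expressing the $\widetilde\Delta$'s (reindexed by $i\mapsto r-i+1$) in terms of the $\Delta$'s, then $P$ is unitriangular and — this is the crucial point — $P$ is, up to the order-reversing permutation, precisely $Seif$ itself (or its transpose, depending on conventions), because the Seifert matrix entries $Seif_{i,j} = \Delta_i\cdot\Delta_j$ for $i<j$ are exactly the Picard–Lefschetz coefficients that appear when one pushes one radial thimble past the others. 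The orientations on the Lefschetz thimbles of the $\widetilde\gamma_i$ are then fixed (uniquely, given those of the $\gamma_i$) by declaring the unitriangular part of $P$ to have $1$'s on the diagonal; this is the "way of orienting" promised in the statement.

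Third I would finish by comparing Seifert matrices. Using the first bullet of Lemma \ref{lemmaseifert} — the Seifert pairing $\cdot_l$ on $H_n(X,f^{-1}(p))$ is independent of the chosen basis of vanishing paths — I would express $\widetilde{Seif}$ as the Gram-type matrix of the pairing $\cdot_l$ in the $\widetilde\Delta$-basis, rewrite it via $P$ in terms of the $\Delta$-basis, and recognize that the pairing $\cdot_l$ in the radial basis $\gamma_1,\ldots,\gamma_r$ has matrix $Seif$. Together with the upper-triangular structure and the order-reversal $i\mapsto r-j+1$, $j\mapsto r-i+1$, a short linear-algebra manipulation yields $(Seif^{-1})_{i,j} = \widetilde{Seif}_{\,r-j+1,\,r-i+1}$; the inverse appears for the familiar reason that the "dual" system of vanishing paths realizes the dual basis with respect to the Seifert pairing, so its Gram matrix is the inverse transpose, and the transpose is absorbed by the order-reversal.

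The main obstacle I anticipate is not the algebra at the end but pinning down precisely which Dehn twists intervene — i.e., verifying that $\widetilde{\gamma_i}$ crosses exactly $\gamma_{r-i+2},\ldots,\gamma_r$ and nothing else, with the correct handedness, so that $P$ comes out unitriangular with the Seifert entries in exactly the right positions rather than some conjugate or transpose of them. This requires a careful isotopy argument in the punctured disk with the branch cut, keeping scrupulous track of the clockwise/counter-clockwise conventions fixed in Section \ref{ss2.3} and of the boundary orientations on the vanishing spheres; getting a sign or a transpose wrong here would flip the statement. Once the braid move is correctly identified, everything else is a mechanical consequence of the Picard–Lefschetz formula and Lemma \ref{lemmaseifert}.
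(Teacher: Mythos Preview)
Your approach is correct and is precisely the Picard--Lefschetz route that the paper explicitly acknowledges (``Again this can be seen using Picard-Lefschetz formula but\ldots'') before opting for a different argument. The paper's own proof proceeds via the variation pairing: after moving the base point outside the circle of critical values, one observes directly from the picture (Figure~\ref{fkoszul}) that the variation-paired intersection of a thimble $T_{\gamma_i}$ against a thimble $T_{\widetilde{\gamma_j}}$ is a Kronecker delta (up to sign and the index reversal), because after the variation the two families of thimbles are geometrically disjoint except for a single transverse intersection when the endpoints coincide. Together with the observation that both systems span the same subspace of $H_n(X,f^{-1}(p))$ (justified by shrinking to a disk containing only $p_1,\ldots,p_r$), this immediately says that the $\widetilde{T}$'s form the dual basis to the $T$'s with respect to $\cdot_l$, and the matrix identity drops out by linear algebra.

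The trade-off is clear: your route is more elementary in that it uses only the Picard--Lefschetz formula and Lemma~\ref{lemmaseifert}, but it forces you to track iterated Dehn twists and then identify the resulting unitriangular change-of-basis matrix with $Seif$ --- exactly the bookkeeping you flag as the main obstacle. The paper's route bypasses all of that: no iterated mutations, no identification of $P$ with $Seif$, just a single geometric computation of the cross-pairing $T_{\gamma_i}\cdot_l T_{\widetilde{\gamma_j}}$. In particular, the orientation choice on the $\widetilde{\gamma_j}$'s and the fact that the two systems span the same subspace fall out of the variation picture for free, whereas in your approach they are consequences of the triangularity of $P$ that must be argued separately.
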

 
Again this can be seen using Picard-Lefschetz formula but a more conceptual proof is via the variation pairing. Let's move the base point outside the circle as described in Figure \ref{fkoszul} for clarity. The first thing that should be noted is that the two sets of thimbles span the same subspace of the relative homology. This follows because we can choose a smaller disk which contains all the critical values and the paths in question, contains no other critical point, and has the base point still at its boundary. The rest of the argument can be constructed by looking at the intersections (after the variation) of the two sets of Lefschetz thimbles from Figure \ref{fkoszul} and some linear algebra.


\subsection{Representing matching cycles as sums of Lefschetz cycles}\label{ss2.7}

Let $\pi: X\to \mathbb{C}$ be a $LF$. Let $p$ be a regular value with a branch cut $l$, and let $\gamma$ and $\gamma'$ be vanishing paths coming out of two different critical values $c$ and $c'$. Assume that the vanishing spheres of $\gamma$ and $\gamma'$ are isotopic. Then the smoothing $m$ of the concatanation of $\gamma$ and $\gamma'$-traced-backwards that does not intersect $l$ is a matching path from $c$ to $c'$.

We orient the Lefschetz thimbles $T$ and $T'$ of $\gamma$ and $\gamma'$ such that the boundary orientations of the vanishing spheres give rise to the same oriented vanishing cycles. Note that there are two ways of doing this. Then $[T]-[T']$ is a class in $H_n(X,\pi^{-1}(p))$ that is well defined up to sign. 

\begin{lemma}
\begin{itemize}
\item $[T]-[T']$ is in the image of the inclusion $H_n(X)\to H_n(X,\pi^{-1}(p))$.
\item $[T]-[T']$ as an element of $H_n(X)$ is the matching cycle of $m$.
\end{itemize}
\end{lemma}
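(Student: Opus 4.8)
The plan is to work locally near the matching path $m$ and use the fact that a matching sphere, by definition, is built from two vanishing spheres in the fiber over the midpoint $m(1/2)$ that are identified by an isotopy. First I would recall the standard local model: in a neighborhood of $m$ inside the base, and after trivializing the fibration over a disk $U$ containing $m(1/2)$ but no critical value in its interior, the matching cycle is represented by a sphere $S^n$ that fibers over $m$ with the two poles sitting over the endpoints $c$ and $c'$ and the equatorial $S^{n-1}$ over $m(1/2)$ being (isotopic to) the common vanishing sphere. This sphere is naturally decomposed into two hemispheres, each of which is precisely a Lefschetz thimble: the hemisphere over the sub-path of $m$ from $c$ to $m(1/2)$ is the thimble $T$ of the vanishing path $\gamma$ (after possibly replacing $\gamma$ by the isotopic half of $m$), and the hemisphere over the other half is the thimble $T'$ of $\gamma'$. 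This is exactly the content of Section 16e of \cite{sbook}, and I would cite it rather than reconstruct the Lefschetz-fibration-over-a-disk picture from scratch.

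For the first bullet, I would argue that $\partial([T]-[T'])=0$ in $H_{n-1}(\pi^{-1}(p))$. The boundary of an oriented thimble $T$ is the oriented vanishing sphere $\partial T = [S_\gamma]$ in $\pi^{-1}(p)$, once we transport along $\gamma$; similarly $\partial T' = [S_{\gamma'}]$. By hypothesis we chose the orientations of $T$ and $T'$ so that the boundary orientations induce the \emph{same} oriented vanishing cycle in the fiber over the midpoint, and since $S_\gamma$ and $S_{\gamma'}$ are isotopic as oriented submanifolds of that fiber — and the fiber over $m(1/2)$ is identified with $\pi^{-1}(p)$ by parallel transport along the relevant arcs of $m$ — we get $\partial T = \pm\partial T'$ with the sign being $+$ by our orientation choice. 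Hence $\partial([T]-[T']) = 0$, and exactness of $H_n(X)\to H_n(X,\pi^{-1}(p))\xrightarrow{\partial} H_{n-1}(\pi^{-1}(p))$ puts $[T]-[T']$ in the image of $H_n(X)$. Here I should be a little careful about which parallel transports identify the midpoint fiber with $\pi^{-1}(p)$; I would fix the braid cut $l$ and note that $\gamma$, $\gamma'$ and the two halves of $m$ all lie in the complement of $l$, so the identifications are canonical up to the ambiguity already built into the definition of the matching path, and the class $[T]-[T']$ lands in $H_n(X)$ well-defined up to sign.

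For the second bullet, I would use the local model above: the class in $H_n(X)$ that is the preimage of $[T]-[T']$ under the (injective) map $H_n(X)\to H_n(X,\pi^{-1}(p))$ is represented by the closed sphere obtained by gluing the hemisphere representing $T$ to the hemisphere representing $-T'$ along their common boundary $S_\gamma = S_{\gamma'}$ (the sign flip on $T'$ is exactly what makes the two boundary orientations cancel so the glued object is a closed manifold). That glued sphere is, by construction, the matching sphere of $m$ associated to the given isotopy, so its homology class is the matching cycle. By Remark \ref{matchinghomology}, since the fibers are affine varieties the matching cycle does not depend on the choice of isotopy, so there is no further ambiguity to worry about. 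The main obstacle I anticipate is purely bookkeeping of orientations and of the parallel-transport identifications between the midpoint fiber and $\pi^{-1}(p)$: getting the signs to line up so that ``$[T]-[T']$'' (rather than $[T]+[T']$) is the closed sphere requires pinning down conventions consistent with Section \ref{ss2.1} and with the boundary-orientation convention for vanishing spheres used throughout Section \ref{ss2.3}. Everything else is a direct appeal to the local normal form for a Lefschetz fibration near a matching path together with the long exact sequence of the pair.
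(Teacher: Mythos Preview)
The paper does not actually give a proof of this lemma: Section~\ref{s2} states explicitly that ``the proofs will be omitted,'' and indeed the lemma is stated without any argument. So there is nothing to compare against on the paper's side.

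Your proposal is correct and is the standard argument. The long exact sequence of the pair together with $\partial T = \partial T'$ (by the orientation convention set up just before the lemma) handles the first bullet, and the observation that the matching sphere over $m$ splits into two hemispheres which are the thimbles over the two halves of $m$ handles the second. One small simplification: in the paper's setup $m$ is defined as the smoothing of the concatenation of $\gamma$ with $\gamma'$ traced backwards, so the ``midpoint'' of $m$ is (isotopically) the base point $p$ itself. You therefore do not need to parallel-transport from a separate midpoint fiber to $\pi^{-1}(p)$; the vanishing spheres of $\gamma$ and $\gamma'$ already live in $\pi^{-1}(p)$ and are assumed isotopic there. This removes most of the bookkeeping you flagged as a potential obstacle.
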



\subsection{Lefschetz bifibrations to compute vanishing cycles as matching cycles}\label{ssbifib}

Let $\pi:X\to \mathbb{C}$ be an LF. In the context of this paper, a map $w: X\to \mathbb{C}^2$ is called a \textbf{compatible Lefschetz bifibration} if $\pi=pr_1\circ w$ and the following conditions are satisfied

\begin{enumerate}
    \item $w$ with the induced Ehresmann connection on $X-crit(w)$ is tame in the sense of Section \ref{sstame}.
    \item For each $c\in \mathbb{C}$ which is not a critical value of $\pi$, the map $$pr_2\circ w\mid_{\pi^{-1}(c)}: \pi^{-1}(c)\to\mathbb{C}$$ is a Lefschetz fibration.
    \item For each critical value $c$ of $\pi$, with its unique critical point $x$, the restriction of $Hess(\pi)_x$ to $ker(dw_x)$ is non-degenerate. Moreover,$$pr_2\circ w\mid_{\pi^{-1}(c)-x}: \pi^{-1}(c)-x\to\mathbb{C}$$ has only non-degenerate critical points.
    \item The critical point set $crit(w)$ of $w$ is smooth and $dw\neq 0$.
    \item $w\mid_{crit(w)}$ is a smooth embedding.
\end{enumerate}

Let us also assume that we do not have any fake critical points (see page 220 of \cite{sbook}). If we choose a path $\gamma$ from a regular point $p$ of $\pi$ in $\mathbb{C}$ to a critical point, i.e. a vanishing path, and consider the trajectories of critical values of Lefschetz fibrations from (2), we see a collision of two critical values at the end. Using Lemma 16.15 of \cite{sbook}, this lets us compute the vanishing cycle of $\gamma$ as a matching cycle of $pr_2\circ w\mid_{\pi^{-1}(p)}: \pi^{-1}(p)\to\mathbb{C}$. We refer to this procedure as \textbf{watching the movie} of critical values.

\subsection{Pencils in weighted-projective spaces}\label{sspencils}

Let $Y\in\mathbb{P}(1,1,w_1,\ldots ,w_n)$ be a quasi-smooth hypersurface with weighted homogenous coordinates $s,t,x_1,\ldots,x_n$ (Definition B17 of \cite{dimcabook}). 

Note that any weighted projective space admits a canonical symplectic structure (as a $V$-manifold) from the standard symplectic reduction construction \cite{weinstein} for the corresponding action of $U(1)$ with the given weights on complex affine space.

Consider the pencil $\mathcal{P}$ on $Y$ induced by intersecting the pencil $\bigcup_{[x:y]\in\mathbb{P}^1}\{xs=yt\}$ with $Y$. We furthermore assume that the base locus $B$ of $\mathcal{P}$ is quasi-smooth and away from $B$ the pencil has only isolated singularities. Note that the complement of $s=t=0$ in $\mathbb{P}(1,1,w_1,\ldots ,w_n)$ does not contain any orbifold points.

We can introduce the Nash blow-up of $Y$ along $B$, namely, $$\hat{Y}:=Y\times  \mathbb{P}^1\cap \{xs=yt\}\subset \mathbb{P}(1,1,w_1,\ldots ,w_n)\times  \mathbb{P}^1.$$

We have maps $\hat{Y}\to Y$ and $W: \hat{Y}\to \mathbb{P}^1$. The fibers of $W$ are the members of the original pencil $\mathcal{P}.$

Our goal is to show that $X:=Y-B\to \mathbb{P}^1$ is tame for the restriction symplectic form on $Y$. Note that this is a slight generalization of the standard set up in projective spaces, where we simply replaced smoothness requirements with quasi-smoothness.  

The key is to work not inside $\mathbb{P}(1,1,w_1,\ldots ,w_n)$, but at the canonical $U(1)$-bundle over it $$S^{2n+3}(1,1,w_1,\ldots ,w_n)\to \mathbb{P}(1,1,w_1,\ldots ,w_n),$$ and do everything $U(1)$-invariantly. For any subset $A$ of a weighted projective space, we denote its preimage by $A^{U(1)}$. Quasi-smoothness implies smoothness of preimages.

Note that $Y^{U(1)}$ has a natural $U(1)$-equivariant contact form $\theta$ obtained by restriction from the sphere. This induces a one form $\hat{\theta}$ on $\hat{Y}^{U(1)}$ which is contact on each fiber of $W^{U(1)}:= W\circ pr^{U(1)}$ (away from the critical circles) and is still $U(1)$-invariant. Using $\hat{\theta}$ we can construct a $U(1)$ invariant Ehresmann connection. Namely, we consider the $d\hat{\theta}$-kernel of the vertical part of $\hat{\theta}=0$ in the complement of the critical circles. This gives us the desired result because at any point $(p,s)$ in $B^{U(1)}\times \mathbb{P}^1\subset \hat{Y}^{U(1)}$ the horizontal subbundle is simply the tangent space to $\{p\}\times \mathbb{P}^1$.

Alternatively, we could pick any Ehresmann connection preserving $B^{U(1)}\times \mathbb{P}^1$, and use an averaging over $U(1)$ argument (see the proof of the Proposition 3 of \cite{dimca}).

\section{A Lefschetz bifibration picture of chain type singularities}\label{s3}

\subsection{Basics}\label{ss3.1}

In what follows (unless otherwise stated) we allow tuples to have length $0$. We denote the only tuple of length $0$ by $\emptyset$. Given a tuple ${{v}}$, we denote by $l_k{ v}$, $0\leq k\leq \abs{{v}}$, the tuple given by the last $k$ entries of ${{v}}$. Similarly, we define $f_k{ v}$, $0\leq k\leq \abs{{{v}}}$, to be the tuple given by the first $k$ entries of ${{v}}$.

Let $n\geq 1$, ${{a}}=(a_1,\ldots a_n)\in \mathbb{Z}_{>0}^{n}$. Recall that we defined \begin{align*} p_{a}(z_1,\ldots ,z_n):=z_1^{a_1}z_2+\ldots +z_{n-1}^{a_{n-1}}z_{n}+z_n^{a_n}.\end{align*} We consider the map $p_{{a}}:\mathbb{C}^n\to \mathbb{C}$ and its only singularity at the origin.

\begin{itemize}
\item The Milnor number of $p_{{a}}$ is $\mu({{a}})=a_1\ldots a_n-a_2\ldots a_n+\ldots +(-1)^{n-1}a_n+(-1)^n$. We define $\mu(\emptyset)=1$. Let us also define $d({{a}})=a_1\ldots a_n$ and $d(\emptyset)=1$. Note the recursive relation:
\begin{align}
\mu({{a}})+\mu(l_{\abs{{{a}}}-1}{{a}})=d({{a}}),
\end{align} for positive length $a$.
\item There is a $\mathbb{C}^*$ action on $\mathbb{C}^n$ given by \begin{align}\label{eqquasi}
z_k\mapsto t^{\mu(l_{n-k}{ a})d(f_{k-1}{a})}z_k
\end{align}
If we consider the action of $\mathbb{C}^*$ given by multiplication with $t^{d({{a}})}$ on the base, $p_{{a}}$ becomes equivariant.
\end{itemize}

\subsection{A $\mathbb{Z}/\mu({{a}})\mathbb{Z}$ equivariant Morsification}\label{ss3.2}
We consider a Morsification of $p_{{a}}$, given by $z_1+p_{{a}}:\mathbb{C}^n \to \mathbb{C}$. A straightforward computation (see Appendix \ref{appa}) shows that $z_1+p_{{a}}$ has $\mu({{a}})$ non-degenerate critical points whose $\mu({{a}})$ critical values are placed equiangularly on a circle centered at the origin. The next paragraph gives a more conceptual explanation of this symmetric arrangement.

There exists a diagonal action of $\mathbb{Z}/\mu({{a}})\mathbb{Z}$ on $\mathbb{C}^n$ which makes $z_1+p_{{a}}$ a Lefschetz fibration with a cyclic symmetry of order $\mu({{a}})$.  This diagonal action is given by \begin{align*}z_k\mapsto \eta^{(-1)^{k-1}\mu(f_{k-1}{{a}})}z_k,
\end{align*}where $\eta$ is the $\mu({{a}})$th root of unity with the smallest positive argument.

We will also consider the Lefschetz fibration \begin{align}f_{{a}}:\{p_{{a}}(z_1,\ldots ,z_n)=1\}\to \mathbb{C}\end{align} given by projection to $z_1$. This map is a a Lefschetz fibration with a cyclic symmetry of order $d({{a}})$ with the diagonal action: \begin{align}z_k\mapsto \zeta^{(-1)^{k-1}d(f_{k-1}{{a}})}z_k.
\end{align}

We now show the relationship between the Lefschetz fibrations $p_{{a}}+ z_1:\mathbb{C}^n\to \mathbb{C}$ and $f_{(1,{a})}: \{\epsilon z_1+p_{{a}}(z_1,\ldots ,z_n)=1\}\to \mathbb{C}$. Note that in the second fibration we called the coordinates $\epsilon, z_1,\ldots ,z_n$.

We now take $d({{a}})$-fold and $\mu({{a}})$-fold branched covers of $p_{{a}}$ and $f_{(1,{a})}$ respectively and obtain the Lefschetz fibrations \begin{align}
\{z_1+p_{{a}}(z_1,\ldots ,z_n)=t^{d({{a}})}\}\to \mathbb{C},
\end{align} with the map being projection to $t$ coordinate, and
\begin{align}
 \{s^{\mu({{a}})} z_1+p_{{a}}(z_1,\ldots ,z_n)=1\}\to \mathbb{C},
\end{align} with the map being projection to $s$ coordinate.

If we restrict the base of these two fibrations to $\mathbb{C}^*$, they become isomorphic. More precisely, consider the map $ \mathbb{C}^*\to \mathbb{C}^*$ that sends $s\mapsto t=\frac{1}{s}$. We can find a map of the total space that cover this map of bases \begin{align}
z_k\mapsto t^{\mu(l_{n-k}{ a})d(f_{k-1}a)}z_k.
\end{align} Notice that this directly comes from the $\mathbb{C}^*$-action.

As a result we obtain a Lefschetz fibration over $\mathbb{P}^1$ with ${\mu({{a}})}\cdot {d({{a}})}$ singularities (see Figure \ref{p1}). Let us denote this Lefschetz fibration by $\pi_a: X_{{{a}}}\to \mathbb{P}^1$, where we have homogeneous coordinates $[s:t]$ on the base.  Note that this recovers the definition given in the introduction (Equation \ref{eqpi}).

\begin{remark}Note that there is an action of $\mathbb{Z}/d({{a}})\mu({{a}})\mathbb{Z}$ on $X_{{{a}}}$, which lifts the action of $\mathbb{Z}/d({{a}})\mu({{a}})\mathbb{Z}$ on $\mathbb{P}^1$ that multiplies $s$ with the $d({{a}})\mu({{a}})$th root of unity. The $\mathbb{Z}/\mu({{a}})\mathbb{Z}$ and $\mathbb{Z}/d({{a}})\mathbb{Z}$ actions described above are both induced from this action.\end{remark}
 
 \begin{figure}
\includegraphics[width=\textwidth]{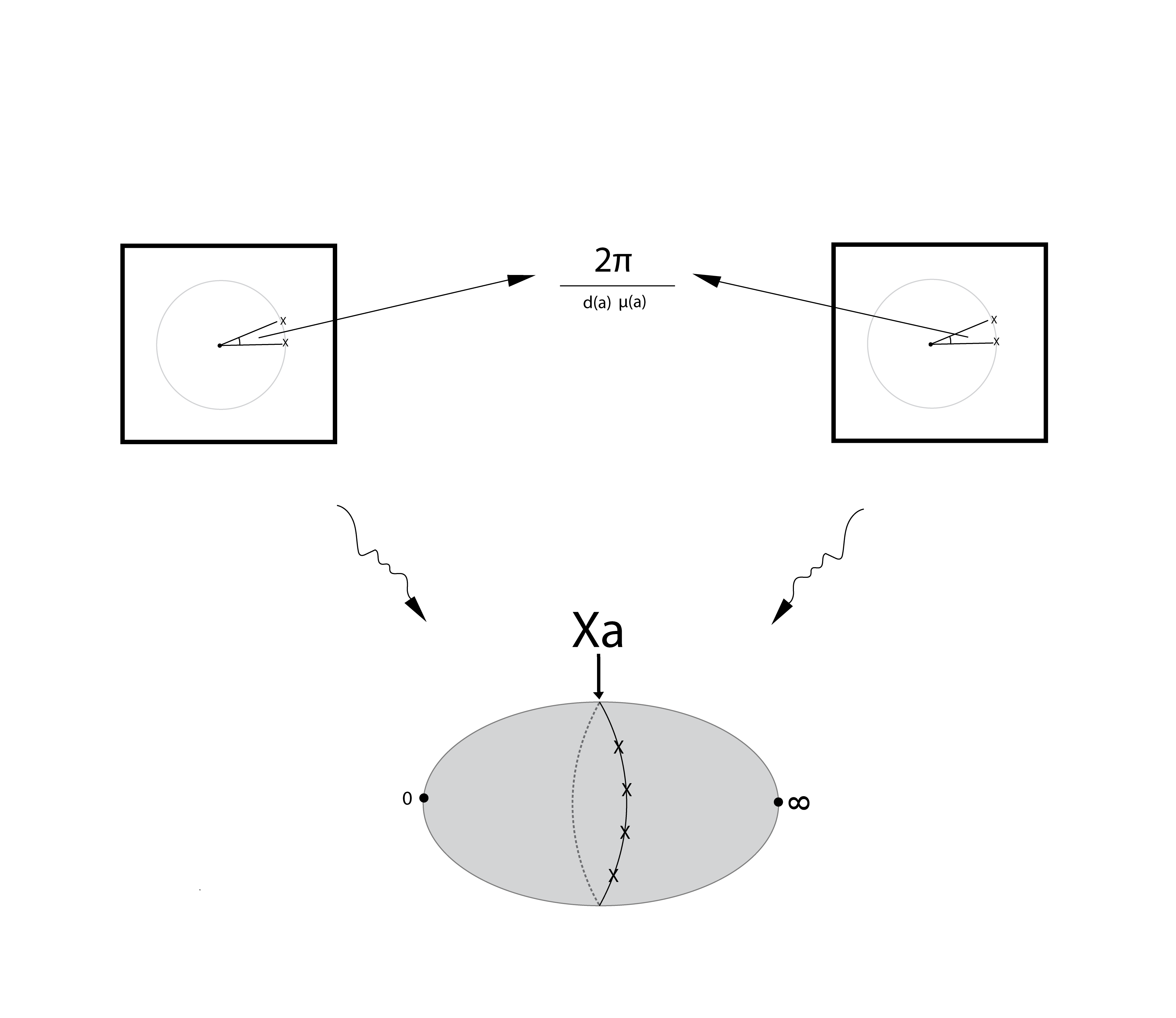}
\caption{$z_1+p_a$ and $f_a$ glue to a LF over $\mathbb{P}^1$ after passing to branch covers}
\label{p1}
\end{figure}

\begin{lemma}\label{lemmatame}
The map $\pi_a: X_a\to \mathbb{P}^1$ is tame.
\end{lemma}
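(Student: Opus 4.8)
The plan is to realize $\pi_a: X_a\to\mathbb{P}^1$ as the fibration $W$ underlying a Nash blow-up of a pencil in a weighted projective space, so that the general tameness result for weighted-projective pencils from Section \ref{sspencils} applies directly. Concretely, I would work in $\mathbb{P}(1,q_1,\ldots,q_n,1)$ with coordinates $s,x_1,\ldots,x_n,t$ (the $q_k$ being the quasi-homogeneity weights of $p_a$ from Section \ref{ss3.1}), and exhibit $X_a$ together with its map to $\mathbb{P}^1$ as arising from intersecting a linear pencil with a quasi-smooth hypersurface, after removing the locus $\{s=t=0\}$ (which contains no orbifold points). The defining equation $s^{\mu(a)}x_1+p_a(x_1,\ldots,x_n)=t^{d(a)}$ is not literally bihomogeneous in $[s:t]$, so the first real task is to massage the setup: pass to the hypersurface $Y\subset\mathbb{P}(1,1,w_1,\ldots,w_n)$ of the right bidegree (grouping $s,t$ so that $s^{\mu(a)}x_1+p_a-t^{d(a)}$ becomes quasi-homogeneous of the correct degree, which dictates the $w_k$), check that $Y$ is quasi-smooth and that the pencil $\bigcup_{[x:y]}\{xs=yt\}\cap Y$ has quasi-smooth base locus $B$ and only isolated singularities off $B$. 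Then $\hat Y = Y\times\mathbb{P}^1\cap\{xs=yt\}$ with $W:\hat Y\to\mathbb{P}^1$ is the Nash blow-up, and $X_a = Y - B$, with $\pi_a = W|_{\hat Y - (B\times\mathbb{P}^1)}$ under the identification of $\hat Y$ minus the exceptional locus with $Y-B$.

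The second task is to match the symplectic data. Section \ref{ss2.2} prescribes that on each $X$ embedded in (weighted projective)$\times$(affine) we use the restriction of the standard symplectic form; here the relevant form on $X_a\subset\mathbb{P}(1,q_1,\ldots,q_n,1)-\{s=t=0\}$ is the restriction of the canonical symplectic ($V$-manifold) structure coming from symplectic reduction, exactly as in Section \ref{sspencils}. So I would verify that the Ehresmann connection on $X_a-\mathrm{crit}(\pi_a)$ defined by this restricted form agrees (up to something that does not affect extendability of integral curves) with the connection produced in Section \ref{sspencils} via the $U(1)$-equivariant contact form $\hat\theta$ on $\hat Y^{U(1)}$. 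The cleanest route is probably not to compare connections pointwise but to invoke the last paragraph of Section \ref{sspencils}: any Ehresmann connection preserving $B^{U(1)}\times\mathbb{P}^1$ can be averaged over $U(1)$, and tameness is insensitive to such modifications, so it suffices that the symplectic connection on $X_a$ extends over the exceptional locus in a controlled way — which is precisely the content of the Nash blow-up construction.

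Granting the identification, tameness of $\pi_a$ is then the conclusion ``$X:=Y-B\to\mathbb{P}^1$ is tame'' from Section \ref{sspencils}, applied to our $Y$, $B$. I would also note, as the excerpt anticipates in the remark preceding Lemma \ref{lemmatame}, that on the affine charts $\{s\neq 0\}$ and $\{t\neq 0\}$ this fibration restricts to the branched covers of $p_a+z_1$ and of $f_{(1,a)}$ respectively, so tameness of $\pi_a$ simultaneously packages the tameness of those affine Lefschetz fibrations; the only genuinely new input over the affine statements is good behaviour near the two ``fibers at infinity'' $\{s=0\}$ and $\{t=0\}$, which is exactly where the quasi-smooth fiberwise compactification does its work.

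The main obstacle I expect is the bookkeeping of weights and quasi-smoothness: getting the grouping of $(s,t)$ and the auxiliary weights $w_k$ so that $s^{\mu(a)}x_1+p_a-t^{d(a)}$ is genuinely quasi-homogeneous of a single degree, and then checking quasi-smoothness of $Y$ and of the base locus $B$ of the pencil, plus the ``isolated singularities away from $B$'' hypothesis — this is a finite but delicate Jacobian-rank computation in weighted coordinates, complicated by having to stay away from the orbifold points (which is guaranteed only after deleting $\{s=t=0\}$). A secondary, more conceptual point to be careful about is that $X_a$ is glued from two branched covers over $\mathbb{C}^*$ rather than presented all at once, so I must make sure the hypersurface-in-weighted-projective-space description really is globally the same object as the glued $\pi_a: X_a\to\mathbb{P}^1$ of Section \ref{ss3.2}, and that the symplectic form used in the gluing picture agrees with the reduction form; once these identifications are in place the tameness statement is essentially a citation of the machinery of Section \ref{sspencils}.
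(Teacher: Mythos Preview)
Your proposal is correct and follows exactly the paper's approach, which is the one-line proof ``Consider the compactification of $X_a$ inside $\mathbb{P}(1,q_1,\ldots,q_n,1)$ and use Section~\ref{sspencils}.'' Your concern about ``massaging'' the equation is unfounded: with the given weights (in particular $q_1=\mu(a_2,\ldots,a_n)$, so that $\mu(a)+q_1=d(a)$) the defining equation $s^{\mu(a)}x_1+p_a(x)=t^{d(a)}$ is already quasi-homogeneous of degree $d(a)$, and the hypersurface $Y$ of Section~\ref{sspencils} is simply the closure of $X_a$ in $\mathbb{P}(1,q_1,\ldots,q_n,1)$, with no reindexing needed beyond permuting the coordinates to put $s,t$ adjacent.
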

\begin{proof}
Consider the compactification of $X_a$ inside $\mathbb{P}(1,q_1,\ldots,q_n,1)$ and use Section \ref{sspencils}.
\end{proof}

We now note a strange looking lemma. This will be used to justify the next section's use in computing the Seifert form.

\begin{lemma}\label{lemmatamefull}
Consider the variety $$\{\epsilon s^{\mu(a)}x_1+p_a(x_1,\ldots ,x_n)=t^{a_1\ldots a_n}\}\subset(\mathbb{P}(1,q_1,\ldots,q_n,1)-\{s=t=0\})\times \mathbb{C}_{\epsilon}$$ with its map to $\mathbb{P}^1\times \mathbb{C}$ given by $([s:t],\epsilon)$.
\begin{enumerate}
    \item This is a tame map with respect to an Ehresmann connection which is equivariant with respect to the $\mathbb{Z}/d({{a}})\mu({{a}})\mathbb{Z}$-action.
    \item There is a path from $([0:1],1)$ to $([1:1],0)$ such that the parallel transport map acts as identity on the corresponding fibers with respect to the canonical identifications of each with $M_a$.
\end{enumerate}
\end{lemma}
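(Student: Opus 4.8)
\textbf{Proof proposal for Lemma \ref{lemmatamefull}.}

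The plan is to set up a single family over a two-dimensional base and then extract from it both the tameness statement and the specific isotopy. For part (1), I would compactify exactly as in Lemma \ref{lemmatame}: the variety in question lives inside $(\mathbb{P}(1,q_1,\ldots,q_n,1)-\{s=t=0\})\times\mathbb{C}_\epsilon$, and its closure in $\mathbb{P}(1,q_1,\ldots,q_n,1)\times\mathbb{C}_\epsilon$ is cut out by the quasi-homogeneous equation $\epsilon s^{\mu(a)}x_1+p_a(x_1,\ldots,x_n)=t^{d(a)}$, which for each fixed $\epsilon$ is a quasi-smooth hypersurface (the $\epsilon=0$ and $\epsilon\neq 0$ cases are the two ends already analyzed, and quasi-smoothness is an open condition that one checks directly on the affine cone via the Jacobian criterion, using $a_n\geq 2$). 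The map to $\mathbb{P}^1\times\mathbb{C}$ then fits the pencil framework of Section \ref{sspencils}: work $U(1)$-equivariantly on the canonical sphere bundle, restrict the standard contact form, and take the $d\hat\theta$-kernel of the vertical distribution to build an Ehresmann connection; tameness follows because near the base locus $\{s=t=0\}^{U(1)}\times\mathbb{P}^1\times\mathbb{C}$ the horizontal subbundle is tangent to the $\mathbb{P}^1$-direction, so horizontal lifts of complete vector fields on $\mathbb{P}^1\times\mathbb{C}$ stay in the compact total space and hence extend continuously. Equivariance with respect to $\mathbb{Z}/d(a)\mu(a)\mathbb{Z}$ (acting by multiplying $s$ by a root of unity, with the induced diagonal action on $x_1,\ldots,x_n$ coming from the $\mathbb{C}^*$-action of Equation \ref{eqquasi}) is automatic since $\theta$ is $U(1)$-invariant and the group action commutes with the $U(1)$-action; alternatively one averages any connection preserving the base locus, as in the proof of Proposition 3 of \cite{dimca}.

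For part (2), the point is that parallel transport along a suitable path in the $\mathbb{P}^1\times\mathbb{C}$ base, followed by the tautological identifications, should be the identity rather than merely a diffeomorphism. The key observation is the scaling symmetry already exploited in Section \ref{ss3.2}: the change of variables $z_k\mapsto t^{\mu(l_{n-k}a)d(f_{k-1}a)}z_k$ interchanges the $\epsilon=1$, $s$-projection picture with a rescaled $\epsilon$, $t$-projection picture. Concretely, I would take the path $\tau\mapsto([\tau:1],1-\tau)$ for $\tau\in[0,1]$ in $\mathbb{P}^1\times\mathbb{C}$, running from $([0:1],1)$ to $([1:1],0)$, and exhibit an explicit trivialization of the restricted family over this path by an appropriate $\tau$-dependent diagonal rescaling of the $x_k$ together with a rescaling of $t$, so that the equation $\epsilon s^{\mu(a)}x_1+p_a=t^{d(a)}$ is preserved and each fiber is carried to the standard model $\{p_a=1\}=M_a$ by the canonical identification. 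One then checks that this explicit trivialization is horizontal for the connection constructed in part (1) — or, more cheaply, one does not need it to be exactly horizontal: since parallel transport along a path depends only on the homotopy class of the path rel endpoints (the base $\mathbb{P}^1\times\mathbb{C}$ minus the discriminant is connected and the relevant loop is contractible, or one simply notes that any two connections give isotopic parallel transports), it suffices that the diffeomorphism induced by parallel transport be isotopic to the one from the explicit trivialization, and the latter is visibly the identity under the canonical identifications.

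The main obstacle I anticipate is part (2), specifically the bookkeeping of the weights $\mu(l_{n-k}a)d(f_{k-1}a)$ to confirm that the rescaling which trivializes the family along the chosen path really does land each fiber on $M_a$ compatibly with the "canonical identification" (which must first be pinned down: for a fiber over $([s_0:t_0],\epsilon_0)$ with $s_0\neq 0$, divide by $s_0$-weighted scaling to normalize $s=1$ and read off the equation $\epsilon_0 x_1+p_a=t^{d(a)}$, then further rescale by the $\mathbb{C}^*$-action to reach $\{p_a=1\}$ when $\epsilon_0=0$; the subtlety is that for $\epsilon_0\neq 0$ there is no canonical identification with $M_a$ unless one also uses the $s$-coordinate, which is exactly why the path must start at $\epsilon=1$, $s=0$). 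A secondary, more routine obstacle is verifying quasi-smoothness of the total space of the compactified family uniformly in $\epsilon$, including at $\epsilon=0$; this is a finite Jacobian computation but must be done carefully at the coordinate points of the weighted projective space. Once these are in hand, the independence-of-path and homotopy-invariance arguments for parallel transport are standard (cf. the remarks following Lemma \ref{lemmaseifert}), and the lemma follows.
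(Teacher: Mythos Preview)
Your treatment of part (1) is correct and matches the paper's: both reduce to the pencil-in-weighted-projective-space argument of Section \ref{sspencils}, with equivariance coming for free from $U(1)$-invariance (or averaging).

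For part (2) your approach diverges from the paper's and contains a gap. You propose the straight segment $\tau\mapsto([\tau:1],1-\tau)$ and then hope to trivialize the restricted family by a $\tau$-dependent diagonal rescaling of the $x_k$ (and $t$). But as you yourself observe in your final paragraph, a fiber over $([s_0:t_0],\epsilon_0)$ with $s_0\neq 0$ \emph{and} $\epsilon_0\neq 0$ has no canonical identification with $M_a$: the defining equation $\epsilon_0 s_0^{\mu(a)}x_1+p_a(x)=1$ (in the chart $t=1$) has a genuine linear term in $x_1$, and no diagonal rescaling of the $x_k$ compatible with the quasi-homogeneity of $p_a$ will kill it (the weight of $x_1$ is $q_1=\mu(a_2,\ldots,a_n)\neq d(a)$). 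So the explicit trivialization you sketch does not exist along your path, and your fallback to ``homotopy invariance of parallel transport'' does not rescue the literal statement that the transport \emph{acts as the identity}, since parallel transport is not homotopy-invariant for a non-flat connection.

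The paper's fix is to choose the path more cleverly: concatenate $T\mapsto([0:1],T)$ from $T=1$ to $T=0$ with $T\mapsto([T:1],0)$ from $T=0$ to $T=1$. This path stays entirely in the locus $\{s=0\}\cup\{\epsilon=0\}$, where the perturbation term $\epsilon s^{\mu(a)}x_1$ vanishes identically. Hence every fiber along the path is literally $\{p_a(x)=1\}=M_a$ as a subvariety (in the chart $t=1$), the family over the path is the constant family, and the parallel transport is the identity on the nose. You were close to this idea when you flagged the $\epsilon_0\neq 0$ subtlety --- the resolution is simply to route the path through $([0:1],0)$ rather than cut diagonally across.
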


\begin{proof}
 The first part is a straightforward generalization of Lemma \ref{lemmatame}. For the second part, take the concatenation of the path $([0:1],T)$ from $T=1$ to $T=0$ with the path $([T:1],0)$ from $T=0$ to $T=1$. It is easy to see that this path does the job as each fiber on the path is also canonically identified with $M_a$. 
\end{proof}

\subsection{Vanishing paths}\label{ss3.3}
Consider $z_1+p_{a}$ and let us consider $A\in\mathbb{C}$ with a large absolute value, which lies in one of the rays from the origin to the critical values, and the paths drawn as in the left side of Figure \ref{basis} to be as our basis of vanishing paths. Let us call this basis of vanishing paths (more accurately, its homotopy class) \textbf{spiraling}. We want to compute the Seifert matrix $Seif(a)$ with respect to the corresponding distinguished basis in $H_{n}(\mathbb{C}^n,(p_{a}+z_1)^{-1}(A))$. The orientations of the vanishing cycles will be specified momentarily.

\begin{figure}
\includegraphics[width=\textwidth]{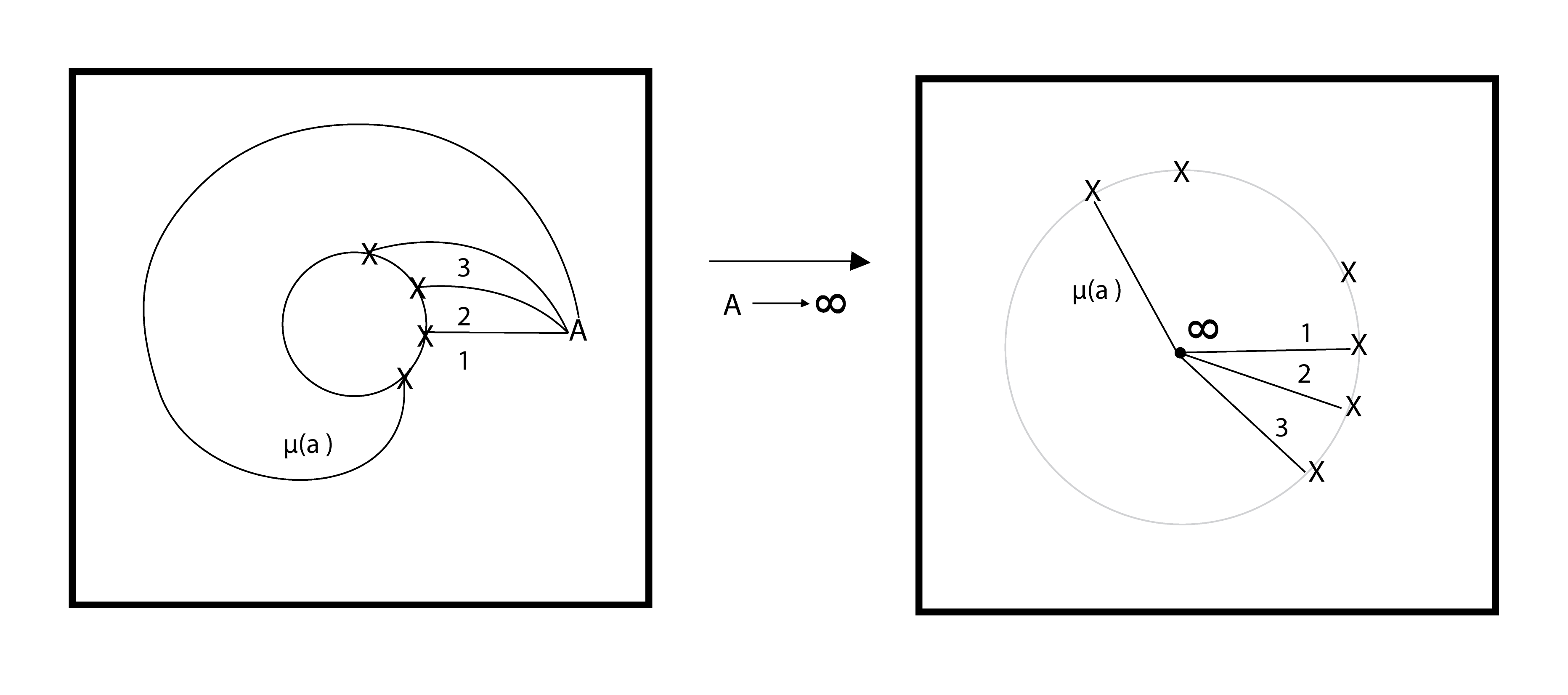}
\caption{The basis of vanishing cycles}
\label{basis}
\end{figure}

%
We can do the computation of the intersection numbers of the vanishing cycles in the other side of $X_a$. Namely, we consider $f_{(1,{a})}: \{\epsilon z_1+z_1^{a_1}z_2+\ldots +z_{n-1}^{a_{n-1}}z_{n}+z_n^{a_n}=1\}\to \mathbb{C}$, and take our base point at the origin with a radial branch cut. We consider $\mu(a)$-many adjacent radial paths as our system of vanishing paths. See the right hand side of Figure \ref{basis}. The orientations that we alluded to in the previous paragraph come from the orientation convention we introduced for Lefschetz fibrations with cyclic symmetries in Section \ref{ss2.4}.

To see the equivalence of the two Seifert matrices, we fix a preimage of $A$ and lift the spiraling paths to the base of $X_a\to \mathbb{P}^1$, and move the regular point to the point at infinity dragging the vanishing paths. Since $\mu<d$, we can then consider these paths in $f_{(1,{a})}: \{\epsilon z_1+z_1^{a_1}z_2+\ldots +z_{n-1}^{a_{n-1}}z_{n}+z_n^{a_n}=1\}\to \mathbb{C}$, if we want to.

It is not automatic that the basis we obtain of $H_{n-1}(M_a)$ this way is distinguished. This is only true because of the second part of Lemma \ref{lemmatamefull}.

\subsection{Analysis of the Lefschetz fibrations $f_{\tilde{a}}$}\label{ss3.4}
Let $\tilde{a}=(a_0,\ldots,a_{n})$ and $a=(a_1,\ldots,a_{n})$, where $n\geq 1$. Also assume that $a_1>1$.

Let us start with some basic properties of \begin{align*} f_{\tilde{a}}: \{z_0^{a_0}z_1+\ldots +z_{n-1}^{a_{n-1}}z_{n}+z_n^{a_n}=1\}=\{p_{\tilde{a}}(z_0,\ldots ,z_n)=1\}\to \mathbb{C}, 
\end{align*} as we recall was given by projection to the $z_0$ coordinate.

\begin{enumerate}
\item $f_{\tilde{a}}$ is the $a_0$-fold branched cover of $f_{(1,a_1,\ldots ,a_n)}$ as in Section \ref{ss2.5}.
\item $f_{\tilde{a}}$ admits a unique cyclic symmetry of order $d(\tilde{a})$ as in Section \ref{ss2.4}.
\item $f_{\tilde{a}}^{-1}(0)=\{p_{(a_1,\ldots ,a_n)}(z_1,\ldots ,z_n)=1\}$, which is the total space of $f_{(a_1,\ldots ,a_n)}$.
\item The restriction of the cyclic symmetry of order $d(\tilde{a})$ of $f_{\tilde{a}}$ to $f_{\tilde{a}}^{-1}(0)$ is equal to the inverse of the cyclic symmetry of order $d(a_1,\ldots ,a_n)$ of $f_{(a_1,\ldots ,a_n)}$.
\end{enumerate}

Recall the definition of a petal from Definition \ref{defpetal} in Section \ref{ss1.2}. We are ready to prove the Proposition \ref{s1prop1}.

%

\begin{proof}[Proof of Proposition \ref{s1prop1}]

First of all let us introduce a slightly different LF, which is equivalent to $f_{\tilde{a}}$ that is easier to work with computationally:

$$\tilde{f}_{\tilde{a}}: \{z_0^{a_0}z_1-\ldots +(-1)^{n-1}z_{n-1}^{a_{n-1}}z_{n}+(-1)^{n}z_n^{a_n}=1\}\to \mathbb{C},$$ where we again project to $z_0$.

It is easy to construct a commutative diagram:
\begin{align}
\xymatrix{ 
\{z_0^{a_0}z_1-\ldots +(-1)^{n}z_n^{a_n}=1\}\ar[r]^{\Phi}\ar[d]_{\tilde{f}_{\tilde{a}}}& \{z_0^{a_0}z_1+\ldots +z_n^{a_n}=1\} \ar[d]^{f_{\tilde{a}}}\\ \mathbb{C}\ar[r]_{\psi}&\mathbb{C},}
\end{align} where $\psi$ is a rotation and $\Phi$ is the restriction of a diagonal automorphism of $\mathbb{C}^{n+1}$. Note that this diagram induces one for $f_{a}$ and $z_1: \tilde{f}_{\tilde{a}}^{-1}(0)\to \mathbb{C}$ as well.

A straightforward computation shows that $\tilde{f}_{\tilde{a}}$ has a critical value $\alpha$ on the positive real axis. It suffices to prove the statement of Proposition \ref{s1prop1} for $\tilde{f}_{\tilde{a}}$ in the particular case of $a_0=1$ and $\gamma$ being a vanishing path for $\alpha$ using results of Section \ref{ss2.5}.

\begin{lemma}\label{lemmabifib}
Consider the map $w_{\tilde{a}}: \{z_0^{a_0}z_1-z_1^{a_1}z_2\ldots +(-1)^{n-1}z_{n-1}^{a_{n-1}}z_{n}+(-1)^{n}z_n^{a_n}=1\}\to \mathbb{C}^2$ given by projecting to $(z_0,z_1)$. 
\begin{itemize}
    \item This is a Lefschetz bifibration for $f_{\tilde{a}}$ in the sense of Section \ref{ssbifib}.
    \item The critical point set of $w_{\tilde{a}}$ is a graph over the $z_0,z_1$ coordinate plane $\mathbb{C}^2\subset \mathbb{C}^{n+1}$ and its projection to this plane, i.e. $critv(w_{\tilde{a}})$ is the solution set of the equation \begin{align}
z_1^{a_1\ldots a_n}-c(z_0^{a_0} z_1-1)^{\mu(a_2,\ldots, a_n)}=0,
\end{align} for some real number $c>0$.
\item There are no fake critical points, therefore the critical point set of $f_{\tilde{a}}$ coincides with the simple branch points of the projection $critv(w_{\tilde{a}})\to \mathbb{C}$ (to the first coordinate).
\end{itemize}


\end{lemma}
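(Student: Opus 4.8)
The plan is to verify the five defining conditions of a compatible Lefschetz bifibration (Section \ref{ssbifib}) for $w_{\tilde{a}}$ by explicit computation, using the chain-type structure of $p_{\tilde a}$ to eliminate variables one at a time. First I would observe that on the fiber $\pi^{-1}(c) = \{z_0 = c\}$, the remaining equation
\[
c^{a_0}z_1 - z_1^{a_1}z_2 + \ldots + (-1)^{n}z_n^{a_n} = 1
\]
is (up to signs and the affine shift $c^{a_0}z_1 \to$ linear term) again of chain type in $z_1,\ldots,z_n$, and $pr_2 \circ w = z_1$ restricted to it; so condition (2), that $z_1|_{\pi^{-1}(c)}$ is a Lefschetz fibration away from critical values of $\pi$, reduces to a statement already understood for chain-type polynomials. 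The key algebraic step is to solve the critical-point equations: along $\{z_0, z_1 \text{ fixed}\}$ one can successively solve $\partial_{z_n} = 0, \partial_{z_{n-1}} = 0,\ldots$ to express $z_2,\ldots,z_n$ as functions of $z_0, z_1$ (this is where $a_1 > 1$ is used, to make the relevant partial derivatives invertible), substitute back into the defining equation, and obtain that $critv(w_{\tilde a})$ is cut out by a single equation in $(z_0, z_1)$. Carrying out the elimination — which is essentially the same computation that underlies the Milnor number formula $\mu(\tilde a) + \mu(l_{n-1}\tilde a) = d(\tilde a)$ — should produce exactly $z_1^{a_1\cdots a_n} - c(z_0^{a_0}z_1 - 1)^{\mu(a_2,\ldots,a_n)} = 0$, and tracking the positivity of the constants that arise from the $(-1)^k$ sign pattern in $\tilde f_{\tilde a}$ gives $c > 0$.

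Next I would check the remaining conditions. Condition (4), that $crit(w)$ is smooth with $dw \neq 0$: since $crit(w)$ is a graph over (an open part of) the $(z_0,z_1)$-plane by the elimination above, smoothness is automatic and $dw \neq 0$ because $z_0$ already has nonvanishing differential. Condition (5), that $w|_{crit(w)}$ is an embedding, likewise follows since $critv(w_{\tilde a})\to\mathbb{C}^2$ is the graph projection composed with inclusion of the plane curve. Condition (3) — nondegeneracy of $Hess(\pi)$ restricted to $\ker(dw)$ at a critical point of $\pi$, and nondegeneracy of the critical points of $z_1|_{\pi^{-1}(c)-x}$ — is the genuinely local part: one computes the Hessian of the chain-type polynomial at the relevant point and checks it is nondegenerate on the appropriate subspace, again using $a_1 > 1$ (and $a_i \geq 2$ implicitly) so that the pure-power terms contribute. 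Tameness, condition (1), I would obtain by compactifying $w_{\tilde a}$ fiberwise inside a weighted projective space and invoking Section \ref{sspencils}, exactly as in Lemma \ref{lemmatame}; the bifibration $w$ factors through such a compactification since $(z_0,z_1)$ extends to the weighted projective setting.

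Finally, for the third bullet — no fake critical points — I would argue that a fake critical point would be a point of $crit(w)$ lying over a regular value of $\pi$ where the two colliding critical values of $pr_2\circ w|_{\pi^{-1}(c)}$ do not actually produce a vanishing cycle degeneration; but the explicit curve $critv(w_{\tilde a}) = \{z_1^{a_1\cdots a_n} = c(z_0^{a_0}z_1-1)^{\mu(a_2,\ldots,a_n)}\}$ has exactly $d(\tilde a) = \mu(\tilde a) + \mu(l_{n-1}\tilde a)$ branch points over the $z_0$-line counted correctly, and matching this count against the known number of critical points of $f_{\tilde a}$ (namely $d(\tilde a)$) forces every branch point to be genuine, and each branch to be simple. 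I expect the main obstacle to be the elimination computation producing the precise exponent $\mu(a_2,\ldots,a_n)$ and the positivity $c > 0$: this requires careful bookkeeping of the quasi-homogeneous weights and the alternating signs introduced in passing from $f_{\tilde a}$ to $\tilde f_{\tilde a}$, and it is here that one must be most careful, since this exponent is exactly what feeds the ``petal of length $\mu(a_2,\ldots,a_n)$'' in Proposition \ref{s1prop1}.
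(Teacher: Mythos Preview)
Your proposal is correct and follows essentially the same approach as the paper. The paper states the lemma without an inline proof and defers the computational content to Appendix~A.3, where the critical-point equations $\partial_{z_k}F=0$ (for $k\geq 2$) are solved via an ansatz $z_k=c_k z_1^{\alpha_k}z_2^{\beta_k}$ with exponents governed by the same recursion you allude to, yielding the curve $z_1^{a_1\cdots a_n}-C(z_0^{a_0}z_1-1)^{\mu(a_2,\ldots,a_n)}=0$ and then checking that its branch points over~$z_0$ satisfy $z_0^{a_0\cdots a_n}=C''$, matching the critical points of $f_{\tilde a}$; your elimination and counting argument is the same computation phrased slightly differently, and the paper likewise leaves tameness and the remaining bifibration axioms as routine checks in the spirit of Lemmas~\ref{lemmatame} and~\ref{lemmatamefull}.
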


Now we restrict to $a_0=1$ and watch the movie of critical values in the sense of Section \ref{ssbifib}. First, we observe what happens using Mathematica. Let us state our observation as a lemma, even though we do not prove it.

\begin{lemma}\label{lemmamathematica}
Let $c$ be a positive real number, and $d>\mu$ be positive integers. We let $\epsilon$ vary from $0$ to $\alpha$ in the positive real axis. The roots of the polynomial equation $z^d-c(\epsilon z-1)^{\mu}=0$ do what is shown in Figure \ref{movie}.
\begin{figure}
\includegraphics[width=\textwidth]{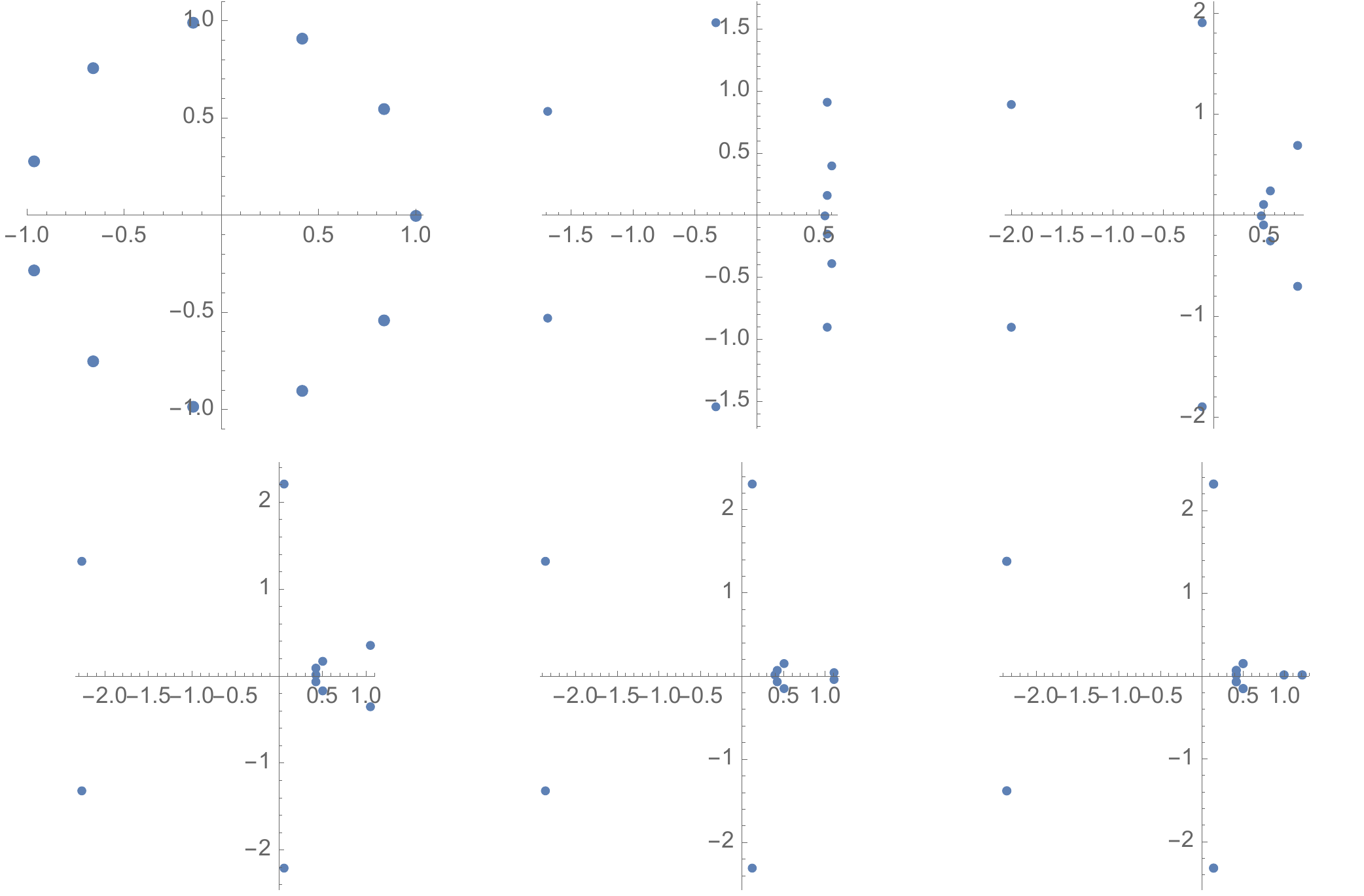}
\caption{This is the movie of critical values.}
\label{movie}
\end{figure}

\end{lemma}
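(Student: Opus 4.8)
\textbf{Proof proposal for Lemma \ref{lemmamathematica}.}

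The plan is to analyze the family of polynomial equations $z^d - c(\epsilon z - 1)^\mu = 0$ as $\epsilon$ moves along the positive real axis from $0$ to $\alpha$, tracking the $d$ roots (which are the projections of the critical values of $w_{\tilde a}$ restricted to the relevant fiber). At $\epsilon = 0$ the equation degenerates to $z^d = c$, whose roots are the $d$-th roots of $c$, equidistributed on the circle of radius $c^{1/d}$ centered at the origin. This is the starting configuration, and the content of the lemma is the statement that, as $\epsilon$ increases, exactly $\mu$ of these roots spiral inward (these become the genuine critical values of $\tilde f_{\tilde a}$) while the remaining $d - \mu$ roots move outward toward infinity, in the specific combinatorial pattern drawn in Figure \ref{movie}. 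Since the excerpt explicitly states that this lemma is \emph{not} proved but only observed experimentally on Mathematica (and indeed Proposition \ref{s1prop1} is ultimately proved instead via the results of Egervary \cite{eger} as summarized in \cite{trinomials} and \cite{szabo}), the honest ``proof'' here is really a derivation of the qualitative features that can be pinned down rigorously, deferring the precise isotopy type to the cited classical literature on the root loci of trinomial-type equations.

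First I would rewrite the equation in a scale-invariant form: setting $w = \epsilon z$ (valid for $\epsilon \neq 0$) turns it into $w^d = c\epsilon^d (w-1)^\mu$, i.e. $w^d/(w-1)^\mu = c\epsilon^d$, so the roots $w$ lie on the level set $\{|w^d (w-1)^{-\mu}| = c\epsilon^d\}$ with prescribed argument, and the whole movie is governed by the rational function $R(w) = w^d(w-1)^{-\mu}$ of degree $\max(d, d) $ — more precisely a map of degree $d$ away from its pole of order $\mu$ at $w=1$. As $c\epsilon^d$ runs from $0$ to $c\alpha^d$ along the positive ray, the $d$ preimages $R^{-1}(\text{positive ray})$ trace out arcs; counting preimages near $w = 0$ (where $R$ has a zero of order $d$, so $d$ preimages collapse there as $c\epsilon^d \to 0$) versus near $w = 1$ and near $w = \infty$ (where $R \sim w^{d-\mu} \to \infty$, contributing $d-\mu$ preimages going to $\infty$) gives the split $\mu$ versus $d-\mu$. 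The critical points of $R$, solutions of $d(w-1) = \mu w$, i.e. $w = d/(d-\mu) = -d/(\mu - d)$... $w = d/(d-\mu)$, determine where arcs can collide, and one checks this critical point is real and positive, which forces the two colliding critical values seen in the Lefschetz bifibration movie to approach along the real axis — consistent with $\alpha$ being real. This makes the configuration of Figure \ref{movie} plausible and fixes its coarse topology.

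The main obstacle — and the reason the authors state this as an unproven lemma rather than proving it — is precisely the fine combinatorial structure: determining \emph{which} arcs the $\mu$ inward-moving roots follow and in what cyclic order they arrive near the origin, i.e. the exact isotopy class of the movie, not merely the count of inward versus outward roots. This is a genuinely delicate question about root loci of high-degree polynomials, the kind of thing that (as the authors note via the Wilkinson-polynomial remark) cannot be settled by looking at numerics. The rigorous route around this, which the paper actually takes, is to invoke Egervary's results \cite{eger} on the geometry of the roots of $z^d - c(\epsilon z - 1)^\mu$ (equivalently, after the substitution above, trinomial-type loci) as packaged in \cite{trinomials} and \cite{szabo}; those give the precise statement needed for Proposition \ref{s1prop1} — namely that the vanishing sphere of a radial path is the matching sphere of a $\mu(a_2,\ldots,a_n)$-petal — without ever needing to certify the Mathematica picture. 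So in a final writeup I would either (a) present only the rigorous coarse analysis above and then cite \cite{trinomials}, \cite{szabo} for the rest, explicitly flagging Lemma \ref{lemmamathematica} as a heuristic observation, or (b) omit a proof of this lemma entirely, exactly as the excerpt does, and proceed directly to the Egervary-based argument in the remainder of Section \ref{ss3.4}.
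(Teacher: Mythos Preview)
Your overall assessment is exactly right and matches the paper: Lemma~\ref{lemmamathematica} is \emph{not} proved in the paper --- it is stated purely as a Mathematica observation, and the paper immediately says ``If we could prove this lemma we would be done here. We will instead prove something less quantitative which is enough for our purposes.'' The actual argument for Proposition~\ref{s1prop1} then proceeds, as you say, via Egervary's results on trinomials as summarized in \cite{trinomials} and \cite{szabo}. So option~(b) in your writeup is precisely what the paper does.

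Two comments on your heuristic sketch. First, at $\epsilon=0$ the equation reads $z^d=c(-1)^\mu$, not $z^d=c$; this does not affect the ``equidistributed on a circle'' picture but is worth getting right. Second, and more substantively, your description ``$\mu$ roots spiral inward while $d-\mu$ move outward toward infinity'' is the asymptotic picture as $\epsilon\to\infty$, not what happens at the actual endpoint $\epsilon=\alpha$: there one sees a single collision of two roots (this is the Lefschetz critical point), with the remaining $d-2$ roots split into a small group of $\mu-1$ and a large group of $d-\mu-1$. Your rational-function substitution $w=\epsilon z$, $R(w)=w^d/(w-1)^\mu$, is a clean way to see the coarse topology, but it is not the reduction the paper uses: the paper instead substitutes $x=z_0z_1-1$ and parametrizes $\{z_1^d=x^\mu\}$ by $t\mapsto(t^d,t^\mu)$, arriving at the trinomial family $t^d-pt^\mu+1=0$, which is what makes the Egervary results directly applicable. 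Your $R(w)$ approach would need a separate bridge to the trinomial literature.
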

If we could prove this lemma we would be done here. We will instead prove something less quantitative which is enough for our purposes. Let us first reinterpret the movie by making a change of variables. 

We can assume $c=1$ without loss of generality. Let us also define $\mu:= \mu(a_2,\ldots, a_n)$ and $d:= a_1\ldots a_n$ (also recall that $a_0=1$ and $a_1>1$). First note that $critv(w_{\tilde{a}})$ does not contain any points where $z_1=0$. Hence we can define the isomorphism of affine varieties $\{(x,z_1)\in (\mathbb{C}^*)^2\mid z_1^d=x^{\mu}\}\to critv(w_{\tilde{a}})$ by $$(x,z_1)\mapsto (\frac{x+1}{z_1},z_1).$$ Now we assume $(\mu,d)=1$ and parametrize $\{z_1^d=x^{\mu}\}$ by a $\mathbb{C}^*$ via $t\mapsto (t^d,t^{\mu})$. The general case follows from similar methods.

Notice that we have turned the problem (recalling the second and third bullet points of Lemma \ref{lemmabifib}) into analyzing roots of the trinomials $$t^d-pt^{\mu}+1,$$ which are in one-to-one correspondence with $z_0=p$ slices of $critv(w_{\tilde{a}})$, for all $p\in \mathbb{C}$. More particularly, we are interested in watching the movie of the images of the $\mu$th powers of these roots as $p$ varies in the real interval $[0,\alpha]$. This movie is precisely the same with the one from Lemma \ref{lemmamathematica}. 

\begin{remark} Geometric locations of the roots of trinomials is a heavily researched subject, which is why the translation is helpful to us. \end{remark}

First, we vary $p$ from the positive real critical point $\alpha$ to $0$, and analyze the movie in the $t$-plane. Then we will need to analyze the movie in the $t^{\mu}$-plane. Let us introduce a time variable $T$, which linearly changes $p$ from $\alpha$ to $0$, as it goes from $0$ to $1$.

At $T=0$, we have a double root on the positive real axis, and until $T=1$ no other double root occurs. The rest of the $d-2$ roots at $T=0$ are split into two groups: one with small absolute values and $\mu-1$ elements, and the other with large absolute values and $d-\mu-1$ elements (see Corollary 2.2 of \cite{dilcher}). The absolute value of the double root is between the maximum absolute value of the small absolute value group and the minimal one of the large absolute value group. Let us call these three groups small, medium, and large roots. Note that since there are no collisions for $T\in (0,1]$, the groups are always well defined. We need the following observation:

\begin{itemize}
    \item For $0<T<1$, the absolute value of a medium root is always greater than or equal to the one of a small root, and similarly for large and medium roots. 
\end{itemize}

This follows from the work of Egarvary which is summarized in \cite{trinomials}. More specifically, we use part (2) of Theorem 3.4. We push the path from $\alpha$ to $0$ so that it only intersects the real axis at the endpoints. For this path the statement of the bullet point above is true with strict inequalities. A limiting argument gives the bullet point for the original path. Note that at time $T=1$, all absolute values become the same.

Now let us go to the $t^{\mu}$-plane. It is easily checked that the collisions in the $t^{\mu}$-plane are in one to one correspondence with the ones in the $t$-plane using the definition of the trinomial along with $(\mu,d)=1$. We continue using our distinction of small, medium and large roots and call the $\mu$th powers of roots proots. We know what our matching path is for small $T$, and we want to follow through it until $T=1$. 

For every $T>0$, the circle centered at the origin that contains the two medium proots is divided into two arcs. Let us call the one that starts as the small arc for small $T$, and continously chosen for all $T$, the net at time $T$. For small $T$, this clearly gives us a matching path (the one that we are interested in). For a $T<1$, if a small or large proot intersects the net at time $T$, then they have to stay on the net until time equals $1$. At $T=1$, we know where all the proots are explicitly. We can determine which ones are the small and medium ones using the discussion surrounding Equation (6) in Section 6 of \cite{szabo}. Using the same results, we can determine which arc for $T=1$ actually is the net. It follows that all and only the small proots intersect the net, and hence the net needs to be pushed out accordingly, which gives us the desired petal.

\end{proof}


 \begin{remark}\label{longremark}
 Let us briefly go back to $z_1+p_a$ and mention another approach to understanding the geometry of the Milnor fiber of $p_a$. We consider $Y_a := (z_1+p_a)^{-1}(0)$, which is another model for the Milnor fiber. We have a tame holomorphic map $g_a: Y_a\to \mathbb{C}$, given by projecting to $z_1$, with isolated singularities. It has a degenerate critical point at the origin and $\mu(a_1,\ldots,a_n)$ non-degenerate critical points with distinct critical values equidistributed on a circle. The singularity of $g_a$ at the origin is equivalent to the singularity of $p_{(a_2,\ldots,a_n)}$ at the origin.  

 The map $g_a$ gives a handle attachment description of the Milnor fiber of $p_a$, which might be explicitly computable for $n=3$ (see the right side of Figure \ref{handle}). The first thing to do here is to understand the Milnor open book $\frac{p_{(a_1,a_2)}}{|p_{(a_1,a_2)}|}:S^3 \to S^1$. We were able to do this to some extent. 
 
 The first step is to draw a transverse knot in $S^3$ that is isotopic to the binding. We can do this in such a way that our representative admits a convex Seifert surface $S$, which can be explicitly drawn inside $S^3$. We obtain an analogue of Figure 9 in \cite{plam}, which would be the result of the same procedure for the polynomial $x^a+y^b$.
 
The Seifert surface $S$ is in fact quite special in that it can be obtained explicitly by iteratively plumbing Hopf bands. This can be proved as in Figures 4 and 5 of \cite{akbulut}. Using \cite{stallings1978constructions}, we can then explicitly compute the monodromy $\phi: S\to S$ as a product of non-separating positive Dehn twists. It also follows that the contact manifold corresponding to the open book given by $S$ with its induced area form and $\phi$ is contactomorphic to $S^3$ with its standard contact structure. This is because we know that it is diffeomorphic to $S^3$ and it is Stein fillable.

In fact more is true, namely this contactomorphism can be chosen so that it is the identity near $S$. This is a non-trivial result which follows from the Appendix of \cite{plamappendix}.

Having done all this, we would want to be able to compute the vanishing cycles coming from $g_a$ inside say $g_a^{-1}(1)$, and then actually draw them on $S$ using an explicit identification of $S$ and $g_a^{-1}(1)$ that extends to a contactomorphism $S^3\to S^3$ (this exists essenially because all fibered Seifert surfaces of the same knot are isotopic). Then, finally, we would lift them to Legendrians near $S$ as in Figure 10 of \cite{plam}, which would finish the procedure. Clearly, new ideas are needed to make this computation feasible.

 \begin{figure}
 \includegraphics[width=0.8\textwidth]{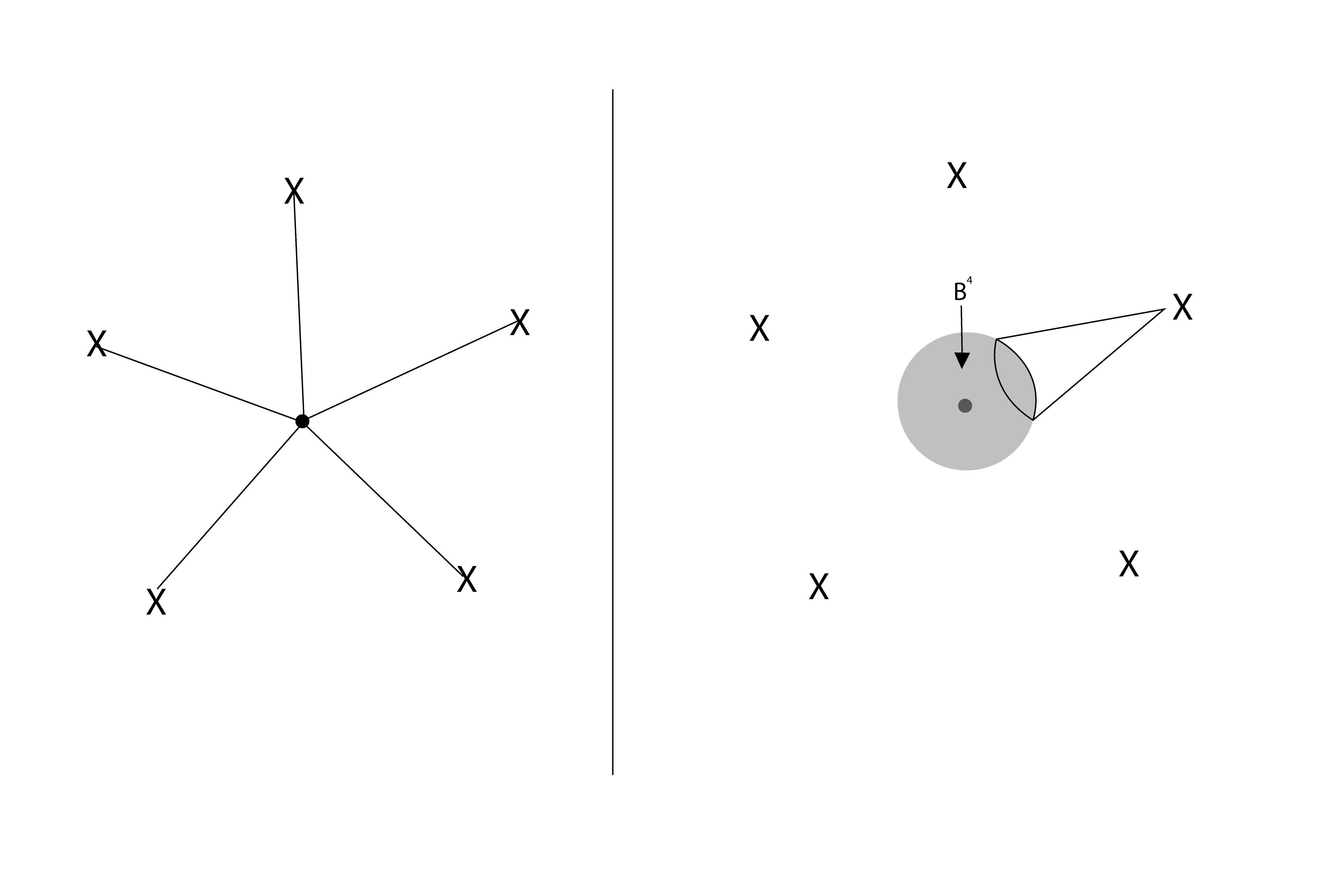}
 \caption{On the left we see the ``spokes'' as in Remark \ref{remarkkoszul}. On the right is the base of $g_a$ and a depiction of the handle attachment picture.}
 \label{handle}
 \end{figure}

 \end{remark}

 \begin{remark}\label{remarkkoszul}
 In the base of $p_a+z_1$, rather than a regular point $A$ with large absolute value and the spiraling basis of vanishing paths, we could have chosen the origin as our regular value and radial paths (with a radial branch cut) as the basis of vanishing paths. This would give us a geometric understanding of the Koszul dual distinguished basis that makes an appearance in our computation. 
 
 The vanishing cycles of radial vanishing paths in the base of $z_1+p_a$ look like "matching cycles" of a radial path from a non-degenerate critical value to the origin (i.e. the degenerate critical value) in the base of $g_a: Y_a\to \mathbb{C}$. We call these the spokes (see the left side of Figure \ref{handle}). The diagonal $\mathbb{Z}/\mu({\bf{a}})\mathbb{Z}$-action rotates the wheel.
 
 In order for this to be useful one has to perturb away the fat singular point as all the information is currently hidden there, but this breaks the symmetry and makes it difficult to see what happens afterwards.
 

 \end{remark}

\section{Orlik-Randell conjecture}\label{s4}





\begin{proof}[Proof of Theorem \ref{s1thm2}]

Recall that for $a=(a_1,\ldots , a_n)$ we had defined $M_a:=p_a^{-1}(0)$. $M_a$ is also by definition the total space of $f_a$.
\begin{itemize} 
    \item The diffeomorphism $\phi: M_a\to M_a$ is given by the cyclic symmetry of order $a_1\ldots a_n$ of $f_a$, namely the diagonal map \begin{align*}z_k\mapsto \zeta^{(-1)^{k-1}d(f_{k-1}{{a}})}z_k,
\end{align*}for $k=1,\ldots ,n$, where $\zeta=e^{\frac{2\pi i}{a_1\ldots a_n}}.$
    \item The element $v$ is given by the matching cycle of an arbitrarily chosen $\mu(a_2,\ldots a_n)$-petal in the base of $f_a$ with one of its possible orientations.
\end{itemize}

Now let us go through the numbered statements one by one.

 (1) is trivial as $\zeta$ is an $(a_1\ldots a_n)$th root of unity.

  We move on to (2). Choices can be made so that in the base of $p_a$ the parallel transport over a circular path in $\mathbb{C}$ (centered at the origin) which traces out an angle of $\theta$ is given by \begin{align*}(z_1,\ldots ,z_n)\mapsto (e^{i\theta w_1}z_1,\ldots ,e^{i\theta w_n}z_n),\end{align*} where $w_k=\frac{\mu(l_{n-k}a)}{a_k\ldots a_n}$. Therefore, a particular choice of a monodromy diffeomorphism is given by setting $\theta=2\pi$.
    
    For every $1\leq k\leq n$, we need to check that $$\frac{-\mu(a)(-1)^{k-1}d(f_{k-1}{{a}})}{a_1\ldots a_n}=\frac{\mu(l_{n-k}a)}{a_k\ldots a_n}$$ modulo $1$. This is easily seen to be true.
    
We give a separate proof of (3), even though it follows from (4), as we find the argument useful. Take a disk $D$ in $\mathbb{C}$ centered at the origin, which contains all the critical values of $p_a$, and choose a point $p$ in its boundary. We have the variation pairing on $H_n(p_a^{-1}(D),p_a^{-1}(1))$. Any diffeomorphism of $p_a^{-1}(D)$ which preserves $p_a^{-1}(1)$, and leaves the vector field generating the flow above invariant, acts on $H_n(p_a^{-1}(D),p_a^{-1}(1))$ and preserves the variation pairing. The diagonal action on $\mathbb{C}^n$ which restricts to the order $a_1\ldots a_n$ action on $M_a$ (defining $\phi$) is definitely such a diffeomorphism since $\Gamma_a$ is commutative.

(4) was proved in Sections \ref{ss3.3} and \ref{ss3.4}, but let us go through it again for clarity. In the base of $p_a+z_1:\mathbb{C}^n\to \mathbb{C}$, we have $\mu(a)$ critical points. Let us pick any one of these critical points and draw the radial ray from the origin to it. We then choose any point on this ray that is between the critical point and $\infty$ and consider the spiraling basis of vanishing paths as in Section \ref{ss3.3}.

Now, let us take the $a_1\ldots a_n$-fold branched cover of $p_a+z_1$ and compactify to obtain $X_a\to \mathbb{P}^1$ as in Section \ref{ss3.2}. We can lift the spiraling basis of vanishing paths to a system of vanishing paths on the base of the branched cover such that they all end at the same point. Then we can drag the distinguished system of paths so that they all end at the fiber at infinity, which is equal to $M_a$. Our analysis from Section \ref{ss3.4} shows that the resulting basis of vanishing cycles in $H_{n-1}(M_a)$ is given by the matching cycles of $\mu(a)$ adjacent $\mu(a_2,\ldots ,a_n)$ petals in the base of $f_a$. To finish the proof that this basis is distinguished, we use Lemma \ref{lemmatamefull}.

Choosing the initial critical point of $p_a+z_1$ differently in this argument and also using part (2) along with the fact that monodromy applied to a distinguished basis is distinguished we finish the proof. 


It will take us a little while to explain the proof of (5). The bulk of the proof will involve $f_a$ as we had mentioned before.

Assume that $n\geq 1$, and $\tilde{a}=(a_0,a_1,\ldots , a_n)$ be so that $a_0\geq 1$ and $a_i\geq 2$, for every $i\geq 1$. Also let $\gamma_1,\ldots \gamma_k$ be vanishing paths in the base of $f_{a}$ such that each $\gamma_j$ is a radial path from a critical value to the origin, and for every $1\leq j\leq k-1$, $\gamma_{i+1}$ is clockwise adjacent to $\gamma_i$.

Orient the Lefschetz thimbles and, hence, the vanishing spheres of each $\gamma_j$ (which we have computed in Proposition \ref{s1prop1} to be matching cycles of $\mu(a_2,\ldots ,a_n)$-petals in the base of $f_a$) in a way that is compatible with the symmetry of order $a_0\ldots a_n$, and call the resulting oriented vanishing cycles $\Delta_i$. We define the $k\times k$-matrix \[
    (M(\tilde{a},k))_{i,j}= 
\begin{cases}
\lbrack \Delta_i \rbrack \cdot \lbrack \Delta_j \rbrack, &  i < j\\
1,  & i=j \\
0 , & \text{otherwise}
\end{cases}
\]

It is clear that $M(\tilde{a},k)$ only depends on $\tilde{a}$ and $k$, and not on the other choices involved. Let us also define $$M(a):=M((1,a_1,\ldots ,a_n),\mu(a)).$$

\begin{proposition}\label{trivprop} The Seifert matrix $Seif(a)$ of $p_a$ (as in Section \ref{ss3.3}) satisfies $$Seif(a)=M(a).$$ 
\end{proposition}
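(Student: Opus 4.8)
The plan is to reduce the equality $Seif(a) = M(a)$ to the comparison of two explicit systems of vanishing paths in the base of $f_{(1,a_1,\ldots,a_n)}$, via the duality statement of Lemma~\ref{s2lemma}. First I would make precise the two bases being compared. On one hand, $Seif(a)$ is computed (Section~\ref{ss3.3}) from the spiraling basis of vanishing paths for $p_a + z_1$; after passing to the $d(a)$-fold branched cover, compactifying to $\pi_a : X_a \to \mathbb{P}^1$, and dragging the base point to the fiber at infinity $M_a$, these become, by the analysis of Section~\ref{ss3.4} (Proposition~\ref{s1prop1}), the matching cycles of $\mu(a)$ adjacent $\mu(a_2,\ldots,a_n)$-petals in the base of $f_a$. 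On the other hand, $M(a)$ is by definition the Seifert matrix of $\mu(a)$ adjacent \emph{radial} vanishing paths in the base of $f_a$, oriented compatibly with the order-$d(a)$ cyclic symmetry. So the content of the proposition is that the $\mu(a)$-petal matching cycles and the radial vanishing cycles, each taken in a suitable consecutive block, have the same Seifert matrix.

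The key step is to set up this comparison using the ``dual systems of vanishing paths'' picture of Section~\ref{ss1.3} together with Lemma~\ref{s2lemma}. I would move the base point of $f_a$ outside the circle of critical values as in Figure~\ref{fkoszul}, drag the radial vanishing paths along (Figure~\ref{fmovingorigin}), and observe — via Section~\ref{ss2.7} — that each $\mu(a_2,\ldots,a_n)$-petal matching cycle is represented as a difference $[T] - [T']$ of two Lefschetz thimbles of vanishing paths lying outside the circle (the right side of Figure~\ref{fpetals}). The two resulting systems of vanishing paths — the petal-thimble system and the radial system — are then ``dual'' in precisely the sense of Figure~18.4 of \cite{sbook}, i.e.\ they fit the hypotheses of Lemma~\ref{s2lemma}, so the Seifert matrix of one is determined by the inverse-transpose of the Seifert matrix of the other. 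Because $p_a + z_1$ and $f_{(1,a)}$ are two sides of the same fibration $X_a \to \mathbb{P}^1$ and the spiraling basis is, by construction, the mutation-partner of the radial basis under moving the base point from $\infty$ to $0$, the relation produced by Lemma~\ref{s2lemma} is exactly $Seif(a) = M(a)$ (this is also why the matrix appearing in \cite{OR} is the inverse of ours — cf.\ the remark after Equation~\ref{seifertmatrix} and Lemma~\ref{s2lemma}).

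I expect two points to require genuine care. The first is bookkeeping of orientations and of the cyclic-vs-linear ordering: the orientation convention of Section~\ref{ss2.4} must be matched to the boundary orientations of the petal matching spheres, and one must check that ``$\mu(a)$ adjacent'' on the petal side corresponds to ``$\mu(a)$ adjacent'' (in the correct clockwise/counterclockwise sense) on the radial side, using the last sentence of Proposition~\ref{s1prop1} about how rotating a radial path by $\tfrac{2\pi}{a_0\cdots a_n}$ rotates the petal by $-\tfrac{2\pi}{a_1\cdots a_n}$. The second, and the main obstacle, is verifying that the two systems genuinely satisfy the duality hypotheses of Lemma~\ref{s2lemma} — in particular that the petal-thimble representatives can be isotoped so that their unique intersections with the circle of critical values all lie on the correct arc relative to the branch cut, and that the span of the petal thimbles equals the span of the radial thimbles inside the relative homology (the ``smaller disk'' argument from the proof of Lemma~\ref{s2lemma}). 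Once that is in place, the identity $Seif(a) = M(a)$ follows by the same linear algebra that underlies Lemma~\ref{s2lemma}, and the independence of $M(a)$ from the auxiliary choices (already noted in the text) closes the argument.
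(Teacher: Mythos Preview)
You have misread the definition of $M(a)$, and as a result you are trying to prove a much harder statement than the one actually being asserted. In the paragraph defining $M(\tilde{a},k)$, the radial vanishing paths $\gamma_1,\ldots,\gamma_k$ live in the base of $f_{\tilde{a}}$, not of $f_a$ (the text reads ``$f_a$'' but this is a slip: note the parenthetical that the vanishing spheres are $\mu(a_2,\ldots,a_n)$-petals in the base of $f_a$, and that the orientation convention uses the symmetry of order $a_0\cdots a_n$; both only make sense if the paths are in $f_{\tilde{a}}$). Consequently $M(a)=M((1,a),\mu(a))$ is, by definition, the Seifert matrix of $\mu(a)$ adjacent \emph{radial} paths in the base of $f_{(1,a)}$, with base point at the origin and orientations dictated by the cyclic symmetry. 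But that is \emph{exactly} the system of vanishing paths that Section~\ref{ss3.3} shows computes $Seif(a)$: one lifts the spiraling basis for $p_a+z_1$ to the $d(a)$-fold cover, drags to the fiber at infinity, and lands on the radial system in $f_{(1,a)}$. So $Seif(a)=M(a)$ is a tautology once Section~\ref{ss3.3} is in place, and the paper's one-line proof (``this follows from the discussion in Section~\ref{s3}'') is the whole story.

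Your argument instead compares the petal matching cycles in $M_a$ with radial vanishing cycles of $f_a$ itself, i.e.\ classes living one fibration level down, in $H_{n-2}(M_{(a_2,\ldots,a_n)})$. Relating those two via duality and the representation of petals as differences of thimbles is precisely the content of the \emph{later} Propositions~4.3 and~4.4 (the inductive step), not of Proposition~\ref{trivprop}. And even there the conclusion is not equality of Seifert matrices: Lemma~\ref{s2lemma} produces an inverse-transpose relation, which is why Proposition~4.4 reads $M(\tilde{a},\mu(l_{n-2}\tilde{a})) = M(a,\mu(l_{n-2}a))^{-1}$. So your proposed route would not yield $Seif(a)=M(a)$ even if carried out; it would yield (a piece of) the induction that comes afterwards.
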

\begin{proof}
 This follows from the discussion in Section \ref{s3}.
\end{proof}
\begin{proposition} $M(\tilde{a},k)$ is a rainbow matrix for every $\tilde{a}$ and $k$ (as in Definition \ref{s1defrainbow}), i.e. 
there exist integers $\alpha_{\tilde{a},1},\ldots , \alpha_{\tilde{a},k-1}$ (the colors) such that $$M(a,k)=Id+\sum_{i=1}^{k}\alpha_{\tilde{a},i}N_k^i,$$ where $N_k$ is the regular nilpotent matrix of size $k$. Moreover, for $k\geq k'$, $M(a,k)$ is  the principal $k\times k$ minor of $M(a,k')$.
\end{proposition}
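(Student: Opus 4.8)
The plan is to prove both claims of the proposition by induction on $n=\abs{a}$, using the duality result (Lemma \ref{s2lemma}) together with the representation of petals as differences of Lefschetz thimbles (Section \ref{ss2.7}) and the recursive structure of the fibrations $f_{\tilde a}$ established in Section \ref{ss3.4}. The ``rainbow'' shape of $M(\tilde a,k)$ is in fact forced by the cyclic symmetry: because the diagonal action of order $d(\tilde a)$ cyclically permutes the radial vanishing paths $\gamma_1,\dots,\gamma_k$ and their oriented vanishing cycles $\Delta_i$, the intersection number $[\Delta_i]\cdot[\Delta_j]$ depends only on $j-i$. This is precisely the statement of the second lemma in Section \ref{ss2.4} applied to $f_{\tilde a}$ (with its order $d(\tilde a)$ cyclic symmetry), and it gives $M(\tilde a,k)=Id+\sum_{i=1}^{k-1}\alpha_{\tilde a,i}N_k^i$ with $\alpha_{\tilde a,i}=\Delta_1\cdot\Delta_{1+i}$, independent of $k$. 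The ``principal minor'' statement then follows immediately from the same observation, since removing the last vanishing path from the system $\gamma_1,\dots,\gamma_k$ to get $\gamma_1,\dots,\gamma_{k'}$ deletes the last $k-k'$ rows and columns of the Seifert matrix, and the colors $\alpha_{\tilde a,i}$ are unchanged.

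So the real content is that the colors exist as integers and do not depend on $k$, which is automatic once we know $M(\tilde a,k)$ has the stated cyclic-symmetry-induced shape; the statement ``$M(a,k)$ is the principal $k\times k$ minor of $M(a,k')$'' should read with $k\le k'$, and this is how I would phrase it. First I would fix $\tilde a$ and a system of $k$ adjacent radial vanishing paths in the base of $f_{\tilde a}$, orient the thimbles compatibly with the order $d(\tilde a)$ symmetry (the orientation convention of Section \ref{ss2.4}), and invoke the second lemma of Section \ref{ss2.4} verbatim: the Seifert matrix is $Id+\sum \alpha_i N^{i}$ where $\alpha_i=\Delta_1\cdot\Delta_{1+i}$ are integers because they are honest intersection numbers of oriented cycles. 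Then I would observe that these intersection numbers $\Delta_1\cdot\Delta_{1+i}$ are computed entirely inside $f_{\tilde a}^{-1}(0)=M_{(a_1,\dots,a_n)}$ using the petal description from Proposition \ref{s1prop1}, hence make sense and take the same values regardless of how many petals $k$ we decide to look at — this is the independence of $k$. Finally the principal-minor claim is the tautology that truncating the ordered system of vanishing paths truncates the Seifert matrix from the bottom-right corner.

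The only subtlety — and the step I expect to need the most care — is the compatibility of orientations across different values of $k$: to say ``$M(a,k)$ is a principal minor of $M(a,k')$'' literally, one must check that the orientation convention of Section \ref{ss2.4} (orient $T_1$ arbitrarily, then $T_i=\Phi^{i-1}(T_1)$) produces the same oriented cycles $\Delta_1,\dots,\Delta_{k}$ whether we set things up for a system of length $k$ or of length $k'>k$. But this is immediate from the construction: the orientations of $\Delta_2,\dots,\Delta_{k}$ are determined by $\Delta_1$ and the symmetry $\Phi$ alone, with no reference to the total number of paths, and the lemma at the end of Section \ref{ss2.4} already records that the resulting matrix is independent of the arbitrary choice of orientation of $T_1$. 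Hence there is genuinely nothing to fix, and the proof reduces to citing the two lemmas of Section \ref{ss2.4} plus the remark that the relevant intersection numbers live in $M_{(a_1,\dots,a_n)}$ and are thus $k$-independent. I would keep the written proof short, essentially: ``This is a restatement of the second Lemma of Section \ref{ss2.4} applied to $f_{\tilde a}$, together with the observation that the intersection numbers $\Delta_1\cdot\Delta_{1+i}$ are computed in $f_{\tilde a}^{-1}(0)=M_{(a_1,\dots,a_n)}$ independently of $k$; the principal minor statement follows since truncating the system of vanishing paths truncates the Seifert matrix.''
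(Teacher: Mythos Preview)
Your proposal is correct and matches the paper's approach: the paper's proof is the single sentence ``This follows from Section \ref{ss2.4},'' which is exactly your core argument that the cyclic symmetry of $f_{\tilde a}$ forces the Seifert matrix to have the rainbow form via the second lemma of Section \ref{ss2.4}, with the colors $\alpha_{\tilde a,i}=\Delta_1\cdot\Delta_{1+i}$ manifestly independent of $k$. Your opening mention of induction on $n$ and the duality lemma is unnecessary scaffolding that you yourself abandon; and you are right that the inequality in the principal-minor clause is stated backwards in the paper.
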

\begin{proof}
This follows from Section \ref{ss2.4}.
\end{proof}

\begin{proposition}Assume that $k\leq \mu(a)$, \begin{enumerate}
\item $\alpha_{\tilde{a},j}=0$, for $j> \mu(a_2,\ldots a_n)$.
\item $\alpha_{\tilde{a},\mu(a_2,\ldots a_n)}=(-1)^{n}$.
\end{enumerate}
\end{proposition}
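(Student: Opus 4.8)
The claim is purely about the combinatorics of intersection numbers $\Delta_i\cdot\Delta_j$ of the matching cycles of $\mu(a_2,\ldots,a_n)$-petals in the base of $f_{\tilde a}$, so the strategy is to reduce it entirely to a computation inside the fiber $f_{\tilde a}^{-1}(0)=M_{(a_1,\ldots,a_n)}$, which is the total space of $f_{(a_1,\ldots,a_n)}$. First I would use the material of Section \ref{ss2.7}: each matching cycle $\Delta_i$ of a $\mu(a_2,\ldots,a_n)$-petal is represented, in $H_n(M_{(a_1,\ldots,a_n)}, \text{fiber over } p)$, as a difference $[T_i]-[T_i']$ of two oriented Lefschetz thimbles coming from radial vanishing paths in the base of $f_{(a_1,\ldots,a_n)}$ which are offset by $\mu(a_2,\ldots,a_n)$ steps (the two paths in the right-hand side of Figure \ref{fpetals}). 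By Lemma \ref{lemmaseifert}, the intersection number $\Delta_i\cdot\Delta_j$ equals the Seifert (variation) pairing $([T_i]-[T_i'])\cdot_l([T_j]-[T_j'])$, which expands into four Seifert-pairing entries of the \emph{radial} system of vanishing paths in the base of $f_{(a_1,\ldots,a_n)}$. This is exactly the inductive reduction advertised in Section \ref{ss1.3}, so the Seifert matrix of the radial system is $M((a_1,\ldots,a_n),\cdot)$ — which by the inductive hypothesis on the length of the tuple is a rainbow matrix with colors $\alpha_{(a_1,\ldots,a_n),j}$ vanishing for $j>\mu(a_3,\ldots,a_n)$.

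Next I would set up the bookkeeping carefully. Label the radial vanishing paths in the base of $f_{(a_1,\ldots,a_n)}$ cyclically by $\mathbb{Z}/d(a_1,\ldots,a_n)$, so that the $i$-th petal has thimbles $T_i = T^{\mathrm{rad}}_{s(i)}$ and $T_i' = T^{\mathrm{rad}}_{s(i)+\mu(a_2,\ldots,a_n)}$ for a suitable index $s(i)$ depending linearly on $i$ (this linear dependence, and the $-\tfrac{2\pi}{d(a_1,\ldots,a_n)}$ rotation of petals under $\phi$, is the content of the last sentence of Proposition \ref{s1prop1}). Then $\Delta_i\cdot\Delta_j$ becomes a fixed $\mathbb{Z}$-linear combination of four entries of the rainbow Seifert matrix of the radial system, evaluated at index differences $j-i$, $j-i\pm\mu(a_2,\ldots,a_n)$. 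Writing everything in the ring $\mathbb{Z}[[t]]/(t^{\mu(a)})$ via the substitution $t\mapsto N$ as in Section \ref{ss1.4}, the generating function of the colors $\alpha_{\tilde a,\bullet}$ is obtained from that of the radial Seifert matrix by multiplying by $(1-t^{\mu(a_2,\ldots,a_n)})$ (this is the ``$[T_i]-[T_i']$'' difference producing a factor $1-t^{\mu(a_2,\ldots,a_n)}$ on one side) and by the analogous factor on the other side — but here one must be careful: the duality Lemma \ref{s2lemma} enters, because the second system of paths attached to the petals (dragged outside the circle, Figures \ref{fmovingorigin}, \ref{fkoszul}) is the Koszul-dual system, so one of the two factors is governed by the \emph{inverse} of the radial Seifert matrix, not the matrix itself. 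Tracking which factor is which is where Lemma \ref{s2lemma} and the precise geometry of Figure \ref{fkoszul} must be combined. Carrying this out gives
\[
1+\sum_j \alpha_{\tilde a,j}\,t^j \;=\; (\text{inverse rainbow generating fn of }M(a_1,\ldots,a_n))\cdot(1-t^{\mu(a_2,\ldots,a_n)})\;+\;(\text{correction}),
\]
from which part (1), that $\alpha_{\tilde a,j}=0$ for $j>\mu(a_2,\ldots,a_n)$ and $j\le\mu(a)$, is immediate once one checks the degree: the inverse rainbow generating function has lowest positive term in degree $\geq 1$ but the only term that can push a contribution beyond degree $\mu(a_2,\ldots,a_n)$ is killed modulo $t^{\mu(a)}$ by the recursive relation $\mu(a)+\mu(l_{n-1}a)=d(a)$ together with $d(a_1,\ldots,a_n)\geq \mu(a_2,\ldots,a_n)+\text{(something)}$; I would verify this elementary inequality directly. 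Part (2) — that the coefficient of $t^{\mu(a_2,\ldots,a_n)}$ is exactly $(-1)^n$ — then follows by reading off the $t^{\mu(a_2,\ldots,a_n)}$ coefficient of the right-hand side: the $(1-t^{\mu(a_2,\ldots,a_n)})$ factor contributes $-1$ times the constant term ($=1$) of the inverse rainbow series, i.e. $-1$ when $n$ is ... — here I would track the sign carefully through the $(-1)^{n(n+1)/2}$ sign convention of Section \ref{ss2.1} and the orientation convention of Section \ref{ss2.4}, expecting a single clean sign $(-1)^n$ to emerge, consistent with step (3) of the inductive recipe in Section \ref{ss1.4}.

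\textbf{Main obstacle.} The genuine difficulty is not the algebra but pinning down the sign and the ``which factor is the inverse'' issue — that is, proving rigorously that the two systems of vanishing paths attached to a petal and its neighbors are honestly Koszul-dual in the sense of Lemma \ref{s2lemma}, with the correct ordering and the correct orientations of the Lefschetz thimbles relative to the cyclic-symmetry orientation convention. This requires a precise global picture of how the $\mu(a_2,\ldots,a_n)$-petals sit relative to the radial system after dragging the base point outside the circle of critical values (Figure \ref{fkoszul}), and a careful check that the abuse of notation flagged in the remark after Proposition \ref{s1prop1} (choice of matching sphere over a matching path) does not affect the homology class. I would expect to spend most of the effort on exactly this compatibility, after which parts (1) and (2) drop out of a short generating-function computation and the sign bookkeeping.
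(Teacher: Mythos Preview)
Your proposal conflates this proposition with the next one (Proposition~4.4) and thereby imports machinery that is not needed here. The paper's proof is much shorter and more direct.

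For part~(1), the argument is purely geometric and one line: by Proposition~\ref{s1prop1}, the vanishing cycle $\Delta_i$ is the matching sphere of a $\mu(a_2,\ldots,a_n)$-petal in the base of $f_a$. Two such petals offset by $j$ steps, with $\mu(a_2,\ldots,a_n) < j < \mu(a)$, are supported on disjoint arcs of the circle of critical values (using $\mu(a)+\mu(a_2,\ldots,a_n)=d(a)$), so the corresponding matching spheres can be taken disjoint in $M_a$. Hence $\Delta_1\cdot\Delta_{j+1}=0$. No generating functions, no induction, no duality.

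For part~(2), the two petals in question share exactly one endpoint. The paper writes each as a difference of two Lefschetz thimbles as in Section~\ref{ss2.7}, but now only \emph{three} thimbles $a,b,c$ are involved (the shared endpoint gives a shared thimble $b$), and one computes
\[
(a-b)\cdot_l(b-c) \;=\; a\cdot_l b \;-\; b\cdot_l b \;=\; (-1)^n
\]
directly from the definition of the Seifert pairing and the self-intersection of a vanishing sphere. This is a local computation at a single critical value; the entries of the radial Seifert matrix of $f_{(a_1,\ldots,a_n)}$ for positive index difference never enter.

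Your generating-function / Koszul-duality approach is essentially the content of Proposition~4.4, which computes $\alpha_{\tilde a,j}$ for $1\le j<\mu(a_2,\ldots,a_n)$ in terms of the \emph{inverse} of $M(a,\cdot)$. That is where Lemma~\ref{s2lemma} and Figure~\ref{fkoszul} are actually used. By pulling that machinery into the present proposition you have obscured the elementary reasons for (1) and (2), and your sketch for (1) (``the only term that can push a contribution beyond degree $\mu(a_2,\ldots,a_n)$ is killed modulo $t^{\mu(a)}$\ldots'') is not a proof as written --- whereas the disjointness argument is.
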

\begin{proof}
\begin{enumerate}
\item This immediately follows from Proposition \ref{s1prop1}.

\item This number is equal to the intersection number of the oriented matching cycles of two $\mu(a_2,\ldots,a_n)$-petals as in Figure \ref{sum2}.
\end{enumerate}
\begin{figure}
\includegraphics[width=0.8\textwidth]{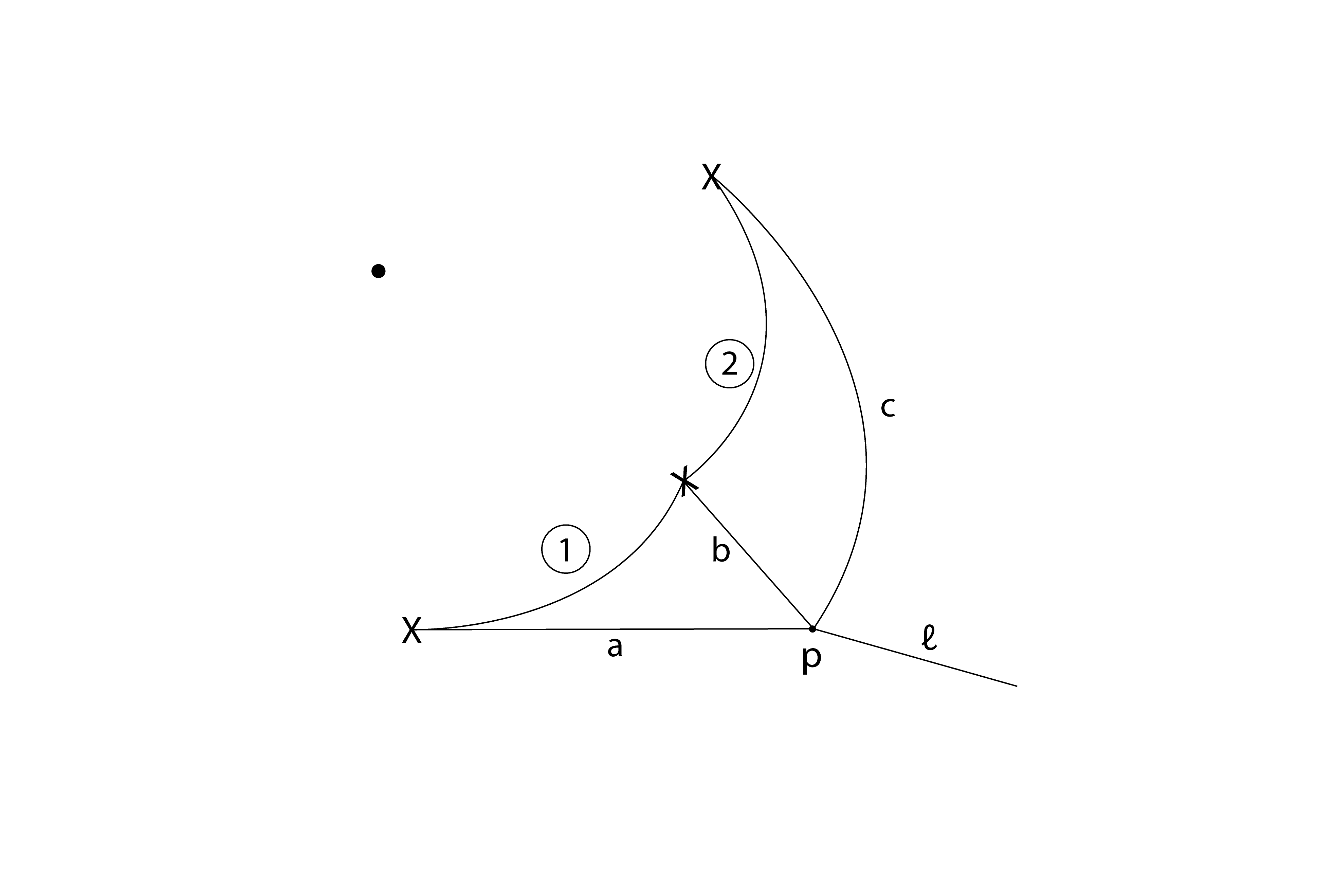}
\caption{Computing intersection numbers of petals I}
\label{sum2}
\end{figure}

Using Section \ref{ss2.7}, we have 
\begin{align*}
    \large\textcircled{\small{1}} \cdot \large\textcircled{\small{2}} & = (a-b) \cdot_l (b-c) \\
    & = a \cdot_l b - b \cdot _l b \\
    & = 1 - (-1)^{n-1}-1 = (-1)^{n}\\ 
\end{align*}

\end{proof}

\begin{proposition}
We have the following inductive formula:
$$M(\tilde{a},\mu(l_{n-2}{\tilde{a}}))= (M(a,\mu(l_{n-2}{a})))^{-1}.$$
\end{proposition}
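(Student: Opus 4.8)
The plan is to identify the two radial systems of vanishing paths in the base of $f_{\tilde a}$ and $f_a$ (restricted to the origin), compute the relevant Seifert matrices, and then invoke the duality of Lemma \ref{s2lemma}. Concretely, recall that $M(\tilde a, k)$ is the Seifert matrix of the $k$ clockwise-adjacent radial vanishing paths $\gamma_1,\ldots,\gamma_k$ in the base of $f_{\tilde a}$, whose vanishing cycles are the matching cycles of $\mu(a_2,\ldots,a_n)$-petals in the base of $f_a$ by Proposition \ref{s1prop1}. On the other hand, $M(a,k')$ records intersection numbers of the vanishing cycles of radial paths in the base of $f_{a}$, which live in $f_a^{-1}(0) = M_{(a_2,\ldots,a_n)}$, the total space of $f_{(a_2,\ldots,a_n)}$. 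The key geometric input, explained in Section \ref{ss1.3}, is that a $\mu(l_{n-2}a)$-petal in the base of $f_a$ can be written in homology as the difference of two Lefschetz thimbles $[T]-[T']$ coming from radial vanishing paths pushed outside the circle of critical values (Section \ref{ss2.7}), and that the two resulting systems of vanishing paths — the ``petal-halves'' and the radial spokes — are mutually dual in the sense of Figure \ref{fkoszul}, i.e. they fall under the hypotheses of the duality Lemma \ref{s2lemma}.

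The key steps, in order, are as follows. First, I would fix the base point of $f_a$, move it outside the circle of critical values, and drag the radial spokes along, as in Figure \ref{fmovingorigin}; this realizes the intersection numbers computing $M(a, \mu(l_{n-2}a))$ inside a single fiber $f_a^{-1}(p)$ via a system of $\mu(l_{n-2}a)$ vanishing paths. Second, I would exhibit the $\mu(l_{n-2}\tilde a)$-many petals (whose matching cycles compute $M(\tilde a, \mu(l_{n-2}\tilde a))$) as the ``other'' system of vanishing paths obtained by representing each petal as $[T_i]-[T_i']$; one has to check that the resulting collection of half-petal thimbles is precisely the dual system $\widetilde{\gamma_1},\ldots,\widetilde{\gamma_r}$ of Section \ref{ss2.6} attached to the spokes, with $r=\mu(l_{n-2}\tilde a)$ — note the index bookkeeping here uses $\mu(\tilde a) = d(\tilde a) - \mu(l_{n-1}\tilde a)$ and the relation $\mu(l_{n-2}\tilde a) = \mu(l_{n-2}a)$ since these tuples share their last $n-1 \geq n-2$ entries, so the two sizes match. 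Third, having set up the duality, I would apply Lemma \ref{s2lemma}: it gives $(Seif^{-1})_{i,j} = \widetilde{Seif}_{r-j+1, r-i+1}$, so that the Seifert matrix of the half-petal system is, up to the order-reversing reindexing, the inverse of the Seifert matrix $M(a, \mu(l_{n-2}a))$ of the spokes. Fourth, I would reconcile the half-petal Seifert matrix with $M(\tilde a, \mu(l_{n-2}\tilde a))$ using Section \ref{ss2.7}, identifying $[T_i]-[T_i']$ with the matching cycle of the $i$th petal and matching up orientations via the orientation convention of Section \ref{ss2.4} — here one must check that the order-reversal in Lemma \ref{s2lemma} is absorbed by the cyclic symmetry $\phi$ (equivalently $\psi_{d}$ on the base), which relates the $i$th petal to the $(r-i+1)$th through the symmetry, so that the transposed-and-reversed matrix is again of rainbow form and equals $M(\tilde a,\mu(l_{n-2}\tilde a))$ on the nose.

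The main obstacle I anticipate is the third-and-fourth-step bookkeeping: verifying that the two systems of vanishing paths genuinely satisfy the clockwise/counterclockwise arc conditions in the hypotheses of Lemma \ref{s2lemma} (so that the duality applies with the correct orientation/handedness), and then tracking the order-reversal $i \mapsto r-i+1$ together with the orientation choices from Section \ref{ss2.4} carefully enough to conclude that $\widetilde{Seif}_{r-j+1,r-i+1}$, as a matrix, literally equals $(M(\tilde a,\mu(l_{n-2}\tilde a)))_{i,j}$ rather than merely a conjugate of it. Because both matrices are rainbow (upper triangular with constant diagonals), the reversal $i\mapsto r-i+1$ combined with the transpose is in fact the identity on rainbow matrices of a given size, which is exactly why the formula comes out clean; I would make this last observation explicit as the final line of the argument. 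The analytic content — tameness, the existence of the Lefschetz bifibration, the petal computation — is all already in hand from Section \ref{s3}, so this proof is essentially a linear-algebra-plus-picture argument once the dictionary between petals and dual spokes is pinned down.
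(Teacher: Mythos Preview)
Your plan is essentially the paper's own proof: decompose each petal as a difference of two outside Lefschetz thimbles (Section~\ref{ss2.7}), relate the outside thimbles to the radial spokes via the duality of Section~\ref{ss2.6}, and absorb the reindexing using the rainbow form. The orientation discussion and the rainbow-matrix observation at the end are exactly what the paper invokes.

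There is one concrete step you gloss over, and it is the actual computational content of the proof. In your step~4 you say you will ``reconcile the half-petal Seifert matrix with $M(\tilde a,\mu(l_{n-2}\tilde a))$'', but you never say \emph{why} the petal--petal pairing collapses to a single thimble--thimble pairing. Each petal is a difference of \emph{two} thimbles, so a priori
\[
(\text{petal}_1)\cdot_l(\text{petal}_2)=(a-c)\cdot_l(b-d)
\]
has four terms. The paper's Figure~\ref{sum1} argument is precisely the verification that three of these vanish: one regroups as $a\cdot_l b - c\cdot_l b - (a-c)\cdot_l d$, uses that $c$ comes after $b$ in the ordering (so $c\cdot_l b=0$), and that $a-c$ is an absolute class whose matching sphere is disjoint from the vanishing sphere of $d$ (so $(a-c)\cdot_l d=0$), leaving $a\cdot_l b$. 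Only \emph{after} this reduction do the ``half-petal thimbles'' $a,b,\ldots$ form a genuine system of vanishing paths whose Seifert matrix you can feed into Lemma~\ref{s2lemma}. Without it, your phrase ``the resulting collection of half-petal thimbles is precisely the dual system'' is not yet justified: there are $2r$ thimbles coming from $r$ petals, sitting over $r+\mu(a_2,\ldots,a_n)$ distinct critical values, and you have not explained which $r$ of them carry all the information.

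So: same approach, but make the $(a-c)\cdot_l(b-d)=a\cdot_l b$ cancellation explicit --- it is the hinge on which the whole identification turns.
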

\begin{proof}
 This time our petals intersect as in Figure \ref{sum1}.
 
\begin{figure}
\includegraphics[width=0.8\textwidth]{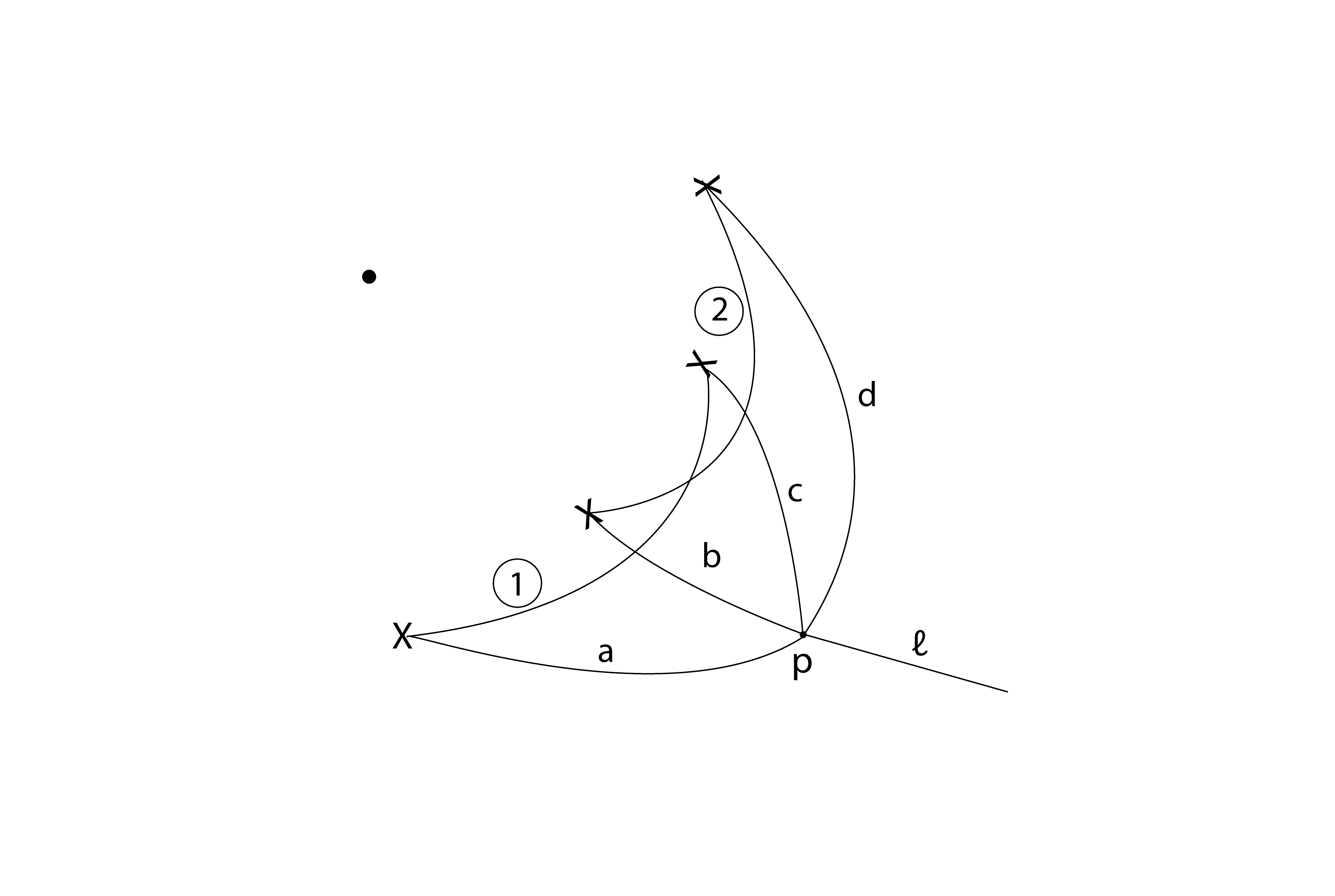}
\caption{Computing intersection numbers of petals II}
\label{sum1}
\end{figure}
 
Hence, we have:
 \begin{align*}
    \large\textcircled{\small{1}} \cdot\large\textcircled{\small{2}} & = (a-c) \cdot_l (b-d) \\
    & = a \cdot_l b - (a-c) \cdot _l d \\
    & = a \cdot_l b\\ 
\end{align*}
The result now follows from Section \ref{ss2.6} up to checking that orientations that we get from Lemma \ref{s2lemma} obey our orientation convention as in Section \ref{ss2.4}. We make the following observation. If one takes the two radial paths from a critical point of $f_a$, one to the origin and one to infinity, and orients the corresponding Lefschetz thimbles so that they intersect positively, then applying the cyclic symmetry to these thimbles result in a positively intersecting pair of thimbles. Going through the variation pairing proof of Lemma \ref{s2lemma} and how the thimbles are used to make up the matching cycles of petals carefully gives the desired orientation result.
\end{proof}

Finally:

\begin{proposition}
We have $Seif(a)=S(a)$, where $S(a)$ is as defined in Section \ref{ss1.4}.
\end{proposition}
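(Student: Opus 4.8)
The plan is to combine the four propositions just established with the inductive description of $S(a)$ recorded in Section \ref{ss1.4}. By Proposition \ref{trivprop} it suffices to prove $M(a)=S(a)$, and I would do this by induction on the length $n$ of $a$, with all $a_i\ge 2$ as in Section \ref{ss1.4}. For the base case $n=1$ one has $M(a_1)=M((1,a_1),\mu(a_1))$ with $\mu(a_1)=a_1-1$; the colors proposition, applied with $\tilde a=(1,a_1)$ so that the tail $(a_2,\ldots,a_n)$ is empty and $\mu(\emptyset)=1$, forces the unique possibly-nonzero color to be $\alpha_1=(-1)^1=-1$, whence $M(a_1)=Id-N_{a_1-1}=S(a_1)$, which is exactly the base case of the Section \ref{ss1.4} recursion.

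For the inductive step I would write $a=(a_1,\ldots,a_n)$ and $a'=(a_2,\ldots,a_n)$ and assume $M(a')=S(a')$; note that $M(a')$ has size $\mu(a')$, that $M(a)$ has size $\mu(a)$, and that $\mu(a')\le\mu(a)$ by the recursion $\mu(a)+\mu(a')=d(a)$ together with $a_1\ge 2$. The remaining three propositions then reproduce, in order, the three steps that build $S(a)$ out of $S(a')$ in Section \ref{ss1.4}. First, the inductive-formula proposition identifies the top-left $\mu(a')\times\mu(a')$ block of $M(a)$ with the inverse of $M(a')$; here one uses that $M(\tilde a,k)$ is insensitive to the leading entry of $\tilde a$ in the range of $k$ we need (via Proposition \ref{s1prop1} it is the intersection matrix of a string of consecutive petals in a Lefschetz fibration that does not involve that entry), so the block equals $(M(a'))^{-1}=(S(a'))^{-1}=A$. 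Second, the rainbow/minor proposition says $M(a)$ is itself a rainbow matrix whose top-left $\mu(a')\times\mu(a')$ block is $A$, i.e.\ $M(a)$ is the $\mu(a)$-rainbow extension $B$ of $A$. Third, the colors proposition, applied with $\tilde a=(1,a_1,\ldots,a_n)$ and $k=\mu(a)$, gives that the $j$-th color of $M(a)$ vanishes for $j>\mu(a')$ and equals $(-1)^n$ for $j=\mu(a')$ — precisely the operation of resetting the $\mu(a')$-th color of $B$ from $0$ to $(-1)^n$. Hence $M(a)=S(a)$, and with Proposition \ref{trivprop} this gives $Seif(a)=S(a)$.

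Almost all of the content is already carried by the four propositions and, behind them, by the Lefschetz bifibration analysis of Section \ref{s3} and the duality/variation arguments; what remains here is bookkeeping: matching the two sizes $\mu(a')\le\mu(a)$, the cutoff index $\mu(a')=\mu(a_2,\ldots,a_n)$, and the sign $(-1)^n$ against the Section \ref{ss1.4} recursion, and making sure the leading-entry-independence of $M(\tilde a,k)$ is invoked only in a range where it is valid. The one step that is not purely formal is the inductive-formula proposition itself — that is where the duality Lemma \ref{s2lemma} and the compatibility of its orientations with the convention of Section \ref{ss2.4} enter — but since that proposition is assumed proved, no genuine obstacle is left for the present statement.
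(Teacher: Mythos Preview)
Your proposal is correct and follows the same inductive scheme as the paper: reduce to $M(a)=S(a)$ via Proposition~\ref{trivprop}, then match the three-step recursion of Section~\ref{ss1.4} using the rainbow, colors, and inductive-formula propositions together with the insensitivity of $M(\tilde a,k)$ to the leading entry (which the paper phrases as ``Section~\ref{ss2.5} to relate $f_{(a_1,\ldots,a_n)}$ with $f_{(1,a_2,\ldots,a_n)}$'').

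Two small remarks. First, for the base case $n=1$ the paper does a direct computation of the vanishing cycles of $f_{(1,a_1)}$ rather than invoking the colors proposition; your use of that proposition with $\mu(\emptyset)=1$ is formally correct but sits in a degenerate regime where the ``petal in the base of $f_{(a_1)}$'' picture is not literally a Lefschetz fibration statement, so the explicit check is safer. Second, your sentence ``i.e.\ $M(a)$ is the $\mu(a)$-rainbow extension $B$ of $A$'' overstates what the rainbow/minor proposition gives: at that stage you only know $M(a)$ is \emph{some} rainbow matrix whose first $\mu(a')-1$ colors agree with $A$, and it is the colors proposition (your third step) that pins down the remaining colors as $0$ except for $(-1)^n$ in position $\mu(a')$. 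With that reordering the logic is exactly the paper's.
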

\begin{proof}
 Recall Proposition \ref{trivprop}. Therefore, it suffices to show that 
 \begin{enumerate}
     \item $M((1,a_1),a_1-1) = S(a_1)$.
     \item $M((1,a),\mu(a))$ is obtained from $M((1,a_2,\ldots,a_n),\mu(a_2,\ldots,a_n))$ the same way $S(a_1,..,a_n)$ is obtained from $S(a_2,\ldots,a_n)$.
 \end{enumerate}
 (1) is an easy computation but let us do it for completeness. We have $f_{(1,a_1)}:\{z_0z_1+z_1^{a_1}=1\}\to \mathbb{C}$ projecting to $z_0$. $f_{(1,a_1)}$ has $a_1$ critical values and we take the $a_1-1$ adjacent radial paths. Their vanishing cycles in $\{z_1^{a_1}=1\}$ are given by the pairs of points with adjacent arguments. More precisely, consider the $a_1$ points of $\{z_1^{a_1}=1\}$ in the $z_1$ plane. The first oriented vanishing cycle is one of the adjacent pair of points with one of them assigned - and the other +. To obtain the next one we take the counter-clockwise $2\pi/n$ rotation of this cycle (recall the orientation convention). Since we have $a_1-1$ vanishing paths, we get exactly the matrix $S(a_1).$

 (2) follows from Propositions 4.2-4,  along with Section \ref{ss2.5} to relate $f(a_1,\ldots,a_n)$ with $f(1,a_2,\ldots,a_n)$.
\end{proof}

Lastly, we come to the proof of (5). It is clear that we can write: $$\phi^{-1}_*v=-\alpha_1''v\ldots -\alpha_{\mu(a)}''\phi^{\mu(a)-1}_*v$$ for some integers $\alpha_1'',\ldots ,\alpha_{\mu(a)}''$.

Note that $\phi^{-1}_*v$ is equal to monodromy operator applied to $\phi^{\mu(a)-1}_*v$ by part (2). It is well-known that the monodromy is equal to $$(-1)^{n}S(a)^{-1}S(a)^T$$ with respect to the basis $v,\ldots ,\phi^{\mu(a)-1}_*v$ using part (4). Notice that \begin{equation}
S(a)^{-1}=\begin{bmatrix} 
1 & \alpha_1' & \alpha_2' & \alpha_3' & \ldots & \alpha_{\mu(a)-1}' \\
0 & 1  & \alpha_1' & \alpha_2' &\ldots & \alpha_{\mu(a)-2}'\\
0 & 0 & 1 &\alpha_1' &\ldots&\alpha_{\mu(a)-3}'\\
 &&&\ldots&&&\\
0 &0&0&\ldots&1&\alpha_1'\\
0 &0&0&0&\ldots&1\\
\end{bmatrix}.
\end{equation}

By direct computation we obtain $$-\alpha_i''=(-1)^n\alpha_{\mu(a)-i}',$$ for $i=1,\ldots \mu(a)$, where $\alpha_0'=1$. Finally we notice that $$\alpha_{\mu(a)-i}'=(-1)^{n-1}\alpha_{i}'$$ directly from the definition. This gives the desired result.

\end{proof}
\appendix

\section{Critical point computations}\label{appa}
The following recursion will be the key point of these computations.

\begin{lemma}
Let $a_0,a_1,\ldots$ be real numbers. Consider the recursion $$\xi_{k+2}=a_k\xi_k-(a_{k+1}-1)\xi_{k+1},$$
which starts from initial data of real numbers $\xi_0$ and $\xi_1$.

If $\xi_k=0$ for some $k\in\mathbb{Z}_{\geq 0}$, then $$\xi_{k+l}=(-1)^{l-1}\xi_{k+1}\mu(a_{k+1},\ldots ,a_{k+l-1}),$$ for every $l\geq 2$.
\end{lemma}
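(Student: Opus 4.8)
The statement is purely a fact about the linear recursion $\xi_{k+2}=a_k\xi_k-(a_{k+1}-1)\xi_{k+1}$: once some term $\xi_k$ vanishes, all later terms are controlled by $\xi_{k+1}$ times a $\mu$-value. The plan is to induct on $l$, starting the induction at $l=2$ and using the shape of the Milnor number recursion $\mu(b_1,\ldots,b_m)=b_1\mu(b_2,\ldots,b_m)-\mu(b_2,\ldots,b_m)+\cdots$, or rather the more convenient two-term form it satisfies.

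First I would reindex: assume without loss of generality that $k=0$, so $\xi_0=0$ and I must show $\xi_l=(-1)^{l-1}\xi_1\,\mu(a_1,\ldots,a_{l-1})$ for all $l\geq 2$. The base case $l=2$ is immediate: $\xi_2=a_0\xi_0-(a_1-1)\xi_1=-(a_1-1)\xi_1=(-1)^{1}\xi_1(a_1-1)$, and $\mu(a_1)=a_1-1$ by the defining formula for the Milnor number of a length-one tuple, so this matches. For $l=3$ I would also check directly, since the recursion $\xi_3=a_1\xi_1-(a_2-1)\xi_2$ needs $\xi_1$ itself (not $\xi_0$), giving $\xi_3=a_1\xi_1-(a_2-1)(-(a_1-1)\xi_1)=\xi_1\big(a_1+(a_2-1)(a_1-1)\big)=\xi_1\,\mu(a_1,a_2)$, consistent with $(-1)^{2}$.

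For the inductive step, assume the formula holds for $l$ and $l+1$ (a two-step induction, matching the order-two recursion) and compute
\[
\xi_{l+2}=a_l\xi_l-(a_{l+1}-1)\xi_{l+1}
=a_l(-1)^{l-1}\xi_1\mu(a_1,\ldots,a_{l-1})-(a_{l+1}-1)(-1)^{l}\xi_1\mu(a_1,\ldots,a_l).
\]
Factoring out $(-1)^{l+1}\xi_1$ leaves
\[
\xi_{l+2}=(-1)^{l+1}\xi_1\big((a_{l+1}-1)\mu(a_1,\ldots,a_l)+a_l\mu(a_1,\ldots,a_{l-1})\big),
\]
so the whole statement reduces to the identity
\[
\mu(a_1,\ldots,a_{l+1})=(a_{l+1}-1)\mu(a_1,\ldots,a_l)+a_l\mu(a_1,\ldots,a_{l-1}),
\]
which I would verify by expanding both sides using the explicit alternating-product formula for $\mu$ (or, cleaner, by noting $\mu(a_1,\ldots,a_m)+\mu(a_1,\ldots,a_{m-1})=a_1\cdots a_m=d(a_1,\ldots,a_m)$ from Section \ref{ss3.1} and using it to collapse the right-hand side). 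This last algebraic identity for $\mu$ is the only real content, and it is a routine manipulation; the main (mild) obstacle is simply organizing a two-term induction correctly and being careful that the $l=2$ and $l=3$ base cases are both handled, since the recursion for $\xi_{l+2}$ references $\xi_l$ and $\xi_{l+1}$ and thus needs two initialized values beyond the vanishing one.
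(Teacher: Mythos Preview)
Your argument is correct and is exactly the kind of two-step induction the paper has in mind; the paper's own proof is simply ``Straightforward computation,'' so there is nothing further to compare.

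One small correction: the identity you invoke from Section~\ref{ss3.1}, namely $\mu(a_1,\ldots,a_m)+\mu(a_1,\ldots,a_{m-1})=a_1\cdots a_m$, is not what the paper states and is in fact false (try $m=2$: one gets $a_1a_2-a_2+a_1\neq a_1a_2$). The paper's recursion removes the \emph{first} entry, i.e.\ $\mu(a_1,\ldots,a_m)+\mu(a_2,\ldots,a_m)=a_1\cdots a_m$. The identity you actually need,
\[
\mu(a_1,\ldots,a_{l+1})=(a_{l+1}-1)\,\mu(a_1,\ldots,a_l)+a_l\,\mu(a_1,\ldots,a_{l-1}),
\]
is nevertheless true and follows immediately from the ``last-entry'' recursion $\mu(a_1,\ldots,a_m)=a_m\,\mu(a_1,\ldots,a_{m-1})+(-1)^m$, which is read off directly from the alternating-sum definition of $\mu$. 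Your alternative suggestion of expanding both sides from the explicit formula also works, so this is a minor slip rather than a gap.
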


\begin{proof}
 Straightforward computation.
\end{proof}

Let us also note the following useful identity once again:
$$\mu(a_1,\ldots ,a_n)+\mu(a_2,\ldots ,a_n)=a_1\ldots a_n.$$

\subsection{Critical points of $p_a+z_1:\mathbb{C}^n\to\mathbb{C}$}

Critical points $(z_1,\ldots ,z_n)$ of this map satisfy \begin{align}\label{crit1}
1+a_1z_1^{a_1-1}z_2=0, z_1^{a_1}+a_2z_2^{a_2-1}z_3=0,\ldots ,z_{n-1}^{a_{n-1}}+a_nz_n^{a_n-1}=0.
\end{align}

Observe that none of the $z_i'$s can be zero. Let us run the above recursion with initial data $\xi_0=0$ and $\xi_1=1$ for $0,a_1,\ldots, a_n$. We obtain the integers $\xi_0,\xi_1,\ldots, \xi_{n+1}$. It is easily seen that for every $k\in\{0,1,\ldots,n+1\}$, there exists $c_k\in\mathbb{R}$ depending only on $a_1,\ldots, a_n$ such that $$z_k=c_k\cdot z_1^{\xi_k},$$ where $z_{n+1}$ is defined as $1$. In particular, we obtain $$1=c_{n+1}z_1^{\mu(a_1,\ldots,a_{n})}$$ using the lemma. The  $c_k$'s can easily be computed, but their explicit value is not important here.

To compute the critical values, note that the equations \ref{crit1} are equivalent to:
\begin{align*}
z_1=-a_1z_1^{a_1}z_2=\ldots =(-1)^{n-2}a_1\ldots a_{n-1}z_{n-1}^{a_{n-1}}z_n=(-1)^{n-1}a_1\ldots a_nz_n^{a_n}.
\end{align*}

Hence, we obtain:  \begin{align*} p_a(z_1,\ldots ,z_n)+z_1=\frac{\mu(a_1,\ldots ,a_n)}{a_1\ldots a_n}z_1.\end{align*}

\subsection{Critical points of $f_a:M_a\to\mathbb{C}$}
Critical points $(z_1,\ldots ,z_n)$ of this map satisfy \begin{align}
z_1^{a_1}+a_2z_2^{a_2-1}z_3=0,\ldots ,z_{n-1}^{a_{n-1}}+a_nz_n^{a_n-1}z_{n+1}=0,
\end{align}where we define $z_{n+1}=1$ and of course $p_a(z_1,\ldots,z_n)=1$.

Observe that none of the $z_i'$s can be zero. Now we will consider the ansatz $$z_k=c_k\cdot z_1^{\alpha_k}\cdot z_2^{\beta_k}.$$ We again can compute these exponents by using the recursion, one that starts with $\xi_0=1, \xi_1=0$  and the other with $\xi_0=0, \xi_1=1$ for $a_1,\ldots,a_n.$ Omitting the details, we obtain $$1=c_{n+1}^{\pm 1}z_1^{a_1\cdot\mu(a_3,\ldots, a_n)}z_2^{-\mu(a_2,\ldots, a_n)}.$$

We can also easily obtain $$cz_1^{a_1}z_2=1$$ using the same strategy with the second step in the previous section. Putting the two equations together, we obtain $$1=Cz_1^{a_1(\mu(a_3,\ldots, a_n)+\mu(a_2,\ldots, a_n))}=Cz_1^{a_1\ldots a_n}.$$

\subsection{Critical set of $w_{\tilde{a}}$}
Critical points $(z_0,\ldots ,z_n)$ satisfy \begin{align}
z_1^{a_1}-a_2z_2^{a_2-1}z_3=0,\ldots ,z_{n-1}^{a_{n-1}}-a_nz_n^{a_n-1}z_{n+1}=0,
\end{align}where we define $z_{n+1}=1$ and $$z_0^{a_0}z_1-\ldots +(-1)^{n-1}z_{n-1}^{a_{n-1}}z_{n}+(-1)^{n}z_n^{a_n}=1.$$

Observe that none of the $z_1,\ldots z_n $ can be zero. Now we consider the ansatz $$z_k=c_k\cdot z_1^{\alpha_k}\cdot z_2^{\beta_k},$$ and obtain
$$1=c_{n+1}^{\pm 1}z_1^{a_1\cdot\mu(a_3,\ldots, a_n)}z_2^{-\mu(a_2,\ldots, a_n)},$$ where now we know that the coefficient is a positive real number.

It is also easy to obtain $$z_2=c\frac{1-z_0^{a_0}z_1}{z_1^{a_1}}$$ using the second step as above, where again $c$ is positive. We now plug this equation in the previous one and obtain: 
$$z_1^{a_1\ldots a_n}-C(z_0^{a_0}z_1-1)^{\mu(a_2,\ldots,a_n)}=0.$$

One can easily check that this curve is smooth. Finally, note that the branch points of the projection of this curve in $\mathbb{C}^2$ to the $z_0$ coordinate satisfy
$$a_1\ldots a_nz_1^{a_1\ldots a_n-1}-C\mu(a_2,\ldots,a_n)z_0^{a_0}(z_0^{a_0}z_1-1)^{\mu(a_2,\ldots,a_n)-1}=0,$$ which implies $z_0^{a_0}z_1=C'$, and leads to $$z_0^{a_0\ldots a_n}=C''.$$ These exactly correspond to the critical points of $f_{\tilde{a}}$, as desired.

%
%
%
%
\section{Mathematica code}\label{appd}
Below is the Mathematica code used to discover Proposition \ref{s1prop1}.

\begin{figure}[h]
\includegraphics[width=\textwidth]{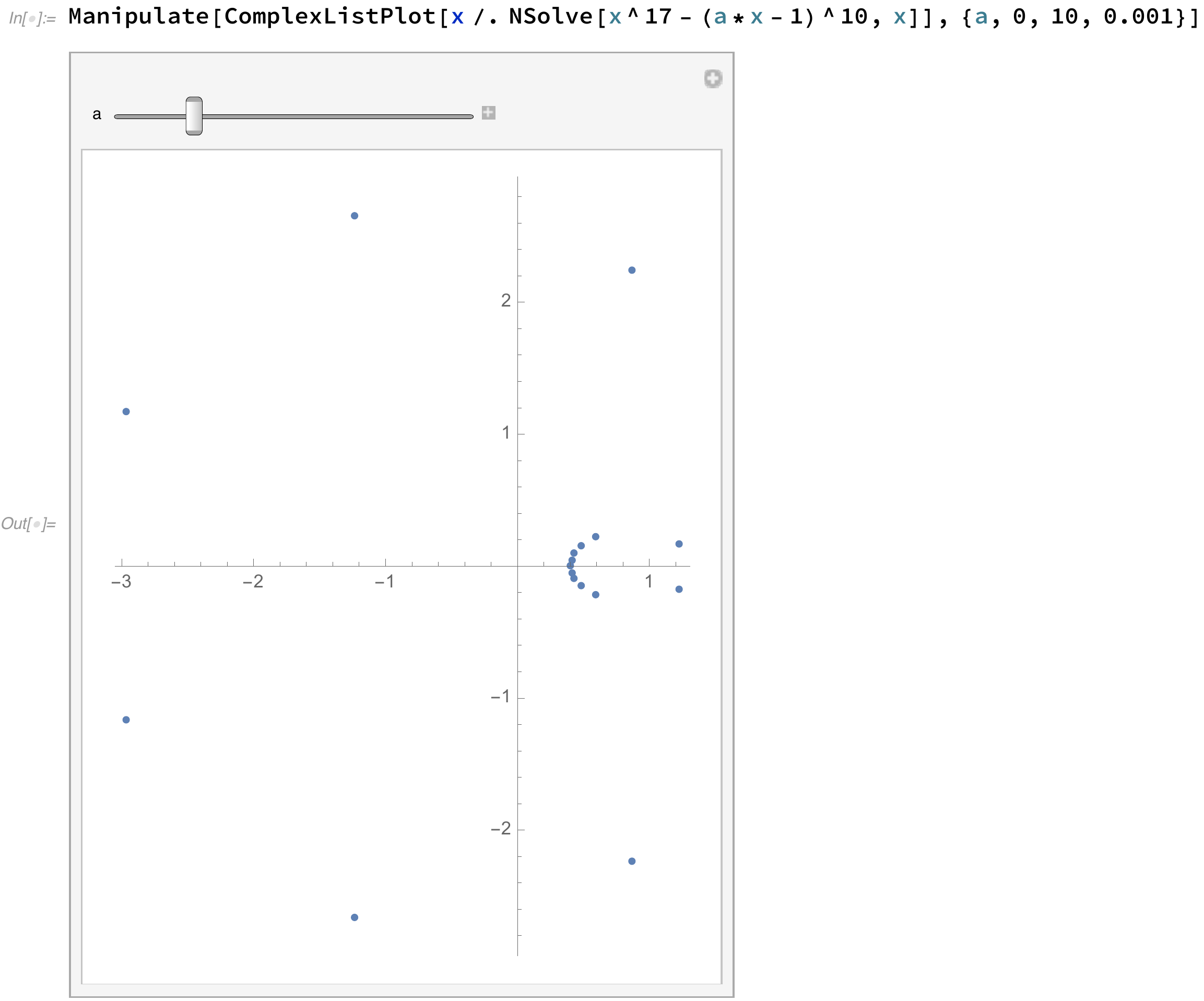}
\label{code}
\end{figure}

\section{Variation pairing}

Let us consider a smooth map of connected smooth manifolds with boundary $g: M^{2n}\to B$, where $C:=\partial B$ is an oriented circle and $M-\partial{M}$ is oriented. Note that $\partial M=g^{-1}(C)$. We assume that $g$ is a submersion on $g^{-1}(N)$, where $N$ is a collar neighborhood of $C$. Moreover, we will assume that we are given an Ehresmann connection (i.e. a choice of a horizontal subbundle) on $g^{-1}(N)\to N$ such that the unique lift to $g^{-1}(N)$ of any compactly supported vector field on $N$, which is tangent to $C$, is complete. This is a weakened version of the tameness we introduced in Section \ref{sstame}.

Let us fix a point $c\in C$, and let $F:=g^{-1}(c)$. Let us also define $P:=\partial M-F$ for convenience later. Our goal is to define a bilinear pairing, called the variation pairing, on the relative homology group $$H_n(M,F,\mathbb{Z}).$$ 
\begin{remark}We are basically recalling the construction in Section 1.1 of \cite{svar} (resulting in the definition given in Equation (1.4)), with the only notable difference that we are removing some of the unnecessary assumptions. The set-up there fits squarely within the one presented here by removing the horizontal boundary of the map in Setup 1.1, which does not affect anything. 
\end{remark}
First of all, note that the canonical map $$H_n(M-P,F,\mathbb{Z})\to H_n(M,F,\mathbb{Z})$$ is an isomorphism, as the inclusion $M-P\to M$ is a homotopy equivalence. To prove the latter statement, one finds a non-negative smooth function $\rho$ on $\partial M$ that vanishes precisely along $F$ using Whitney extension theorem, constructs the vector field $\tau(r)\rho(x)\frac{\partial}{\partial r}$ for $(x,r)\in \partial M\times [0,1)$ in a collar neighborhood of $\partial M$, where $\tau$ is a cut-off function, and uses its time $1$ map to construct a homotopy inverse. 

Let $V, W$ be two classes in $H_n(M,F,\mathbb{Z})$. We want to define their variation pairing $$V\cdot_g W\in \mathbb{Z}.$$ Assume that they correspond to classes $V',W'$ in $H_n(M-P,F,\mathbb{Z}).$

Let $\tilde{c}\in C$ be different than $c$, and define $\tilde{F}:=g^{-1}(\tilde{c})$ and $\tilde{P}:=\partial M-\tilde{F}$. By classical algebraic topology, there is an intersection pairing (for example see Definition 13.1 and Definition 13.18, in particular Equation (13.20), of Dold \cite{dold}, which also works with the same sign convention as we do from Section \ref{ss2.1}):
$$\cdot: H_n(M-\tilde{P},\tilde{F},\mathbb{Z})\times H_n(M-P,F,\mathbb{Z})\to H_0(M-\partial M)\cong \mathbb{Z}.$$Here we can imagine $M$ as sitting inside the manifold obtained by adding an open collar to its boundary to fit into the set-up of Dold squarely.

Finally, we construct a diffeomorphism $\phi: M\to M$, which is the identity outside a neighborhood of $\partial M$ properly contained in $g^{-1}(N)$, and sends $F$ to $\tilde{F}$. We do this by constructing a compactly supported vector field on $N$ (tangent to $C$) whose time $1$ map sends $c$ to $\tilde{c}$ in the positive direction, taking its lift using the connection and defining $\phi$ to be the time $1$ map. In particular, we obtain a map $$\phi_*:  H_n(M-P,F,\mathbb{Z})\to H_n(M-\tilde{P},\tilde{F},\mathbb{Z}).$$

We define \begin{align}\label{eqvarpair}V\cdot_g W:=\phi_*V' \cdot W'.\end{align} This finishes the construction of the variation pairing. The reader will notice that only the restriction of the map $g$ to a neighborhood of $\partial M$ plays any role in the construction. 

\begin{remark}A more general construction could be given for $M$ a manifold with boundary, $Z\subset \partial M$ a closed subset, and $\phi_t$, $t\in [0,1]$, a smooth isotopy of $\partial M$, which displaces $Z$ from itself.  

Consider the case $M=B^4$ is the unit ball, $Z\subset S^{3}=\partial B^4$ is a compact smooth surface with boundary $L$, and $\phi_t$ is supported near $Z$ and pushes $Z$ in the normal direction. Then, the construction results in the Seifert pairing on $H_1(Z,\mathbb{Z})\cong H_2(B^4,Z,\mathbb{Z})$ of $L$ with respect to its Seifert surface $Z$ commonly used in knot theory (see Section 2 of \cite{kauf}, which also contains a generalization to higher dimensional knot theory). Generally speaking, the Seifert pairing is highly dependent on $Z$, but important knot invariants can be gleaned from it nevertheless.  

When $L$ is fibred, choosing $Z$ to be a fiber Seifert surface (unique up to isotopy!) we get a knot invariant of $L$ by considering the Seifert pairing (now unimodular!) up to the action of $GL(H_1(Z,\mathbb{Z}))$ on bilinear forms. To bring these ideas to a full circle, we refer the reader to the Theorem 4.1 of \cite{durfee}, which exploits the special structures (more specifically, the existence of distinguished bases) of variation pairings associated to Lefschetz fibrations over the disk.\end{remark}

Let us note that there is not a geometrically meaningful non-trivial intersection pairing $$H_n(X,\tilde{Z},\mathbb{Z})\times H_n(X,Z,\mathbb{Z})\to  \mathbb{Z},$$ where $X^{2n}$ is a manifold,  and $Z$ and $\tilde{Z}$ are submanifolds, even if $Z$ and $\tilde{Z}$ are disjoint. This can be seen by taking $X$ to be the plane and $Z$ and $Z'$ to be disjoint pairs of points. Hence we had to be slightly careful above.
\bibliographystyle{plain}
\bibliography{chain}

%

\end{document}